\documentclass[10pt]{amsart}

\usepackage{amssymb}
%, mathabx}

\usepackage{enumerate}

\usepackage{helvet} % for Sans Serif fonts

\usepackage{courier} % default ttdefault

\usepackage{eucal} % better calligraphic fonts

\usepackage{mathrsfs} % for fancy calligraphic fonts

\textwidth 6.5in
\textheight 9in

\oddsidemargin 0in
\evensidemargin 0in

\setlength{\marginparwidth}{15mm} % with of the marginal comments
\setlength{\marginparsep}{2mm}  % distance from marginal commnets and main text
\setlength{\marginparpush}{4mm} % (vertical) distance between two marginal notes
\reversemarginpar %marginal notes appear on the left side margin
\normalmarginpar %switches back to right side margin

% other stuff that can be done with marginpar
\let\oldmarginpar\marginpar %changes the font of margin text
\renewcommand\marginpar[1]{\-\oldmarginpar{\raggedright\small\sf #1}}

\title{The algebraic dynamics of generic endomorphisms of $\P^n$}

\author{Najmuddin Fakhruddin}
 
\address{School of Mathematics, Tata Institute of Fundamental Research, 
Homi Bhabha Road, Mumbai 400005, India}

\email{naf@math.tifr.res.in}

\newcommand{\nc}{\newcommand}

\nc{\rnc}{\renewcommand}

\nc{\bs}{\backslash}
\nc{\te}{\otimes}
\nc{\lf}{\lfloor} %for round down
\nc{\rf}{\rfloor}
\nc{\lc}{\lceil}  %for round up
\nc{\rc}{\rceil}
\nc{\lr}{\longrightarrow}
\nc{\sr}{\stackrel}
\nc{\dar}{\dashrightarrow}
\nc{\thra}{\twoheadrightarrow}

\nc{\mc}{\mathscr} % instead of mathcal
\nc{\mb}{\mathbb}
\nc{\mf}{\mathbf}
\nc{\mr}{\mathrm}
\nc{\mg}{\mathfrak}

\nc{\bP}{\mathbb{P}}
\rnc{\P}{\mathbb{P}}
\nc{\Q}{\mathbb{Q}}
\nc{\Z}{\mathbb{Z}}
\nc{\C}{\mathbb{C}}
\nc{\R}{\mathbb{R}}
\nc{\A}{\mathbb{A}}
\nc{\V}{\mathbb{V}}
\nc{\W}{\mathbb{W}}
\nc{\N}{\mathbb{N}}
\nc{\F}{\mathbb{F}}
\nc{\G}{\mathbb{G}}
\nc{\qb}{\overline{\mathbb{Q}}}

\nc{\aff}{{\A}^1}
\nc{\naive}{\!\sim_n}
\nc{\Spec}{\mr{Spec}}

\nc{\ep}{\epsilon}
\nc{\ve}{\varepsilon}
\rnc{\l}{\lambda}

\nc{\wt}{\widetilde}
\nc{\wh}{\widehat}
\nc{\ov}{\overline}
%\rnc{\sl}{\shoveleft}

\nc{\res}{\operatorname{Res}}
\nc{\Mor}{\operatorname{Mor}}
\nc{\Per}{\operatorname{Per}}
\nc{\prep}{\operatorname{Prep}}
\nc{\End}{\operatorname{End}}
\nc{\Orb}{\operatorname{Orb}}
\nc{\Gal}{\operatorname{Gal}}
\nc{\ns}{\operatorname{NS}}

\nc{\lcm}{\operatorname{lcm}}
\nc{\ch}{\operatorname{char}}

\newtheorem{thm}{Theorem}[section]
\newtheorem{prop}[thm]{Proposition}

\newtheorem{cor}[thm]{Corollary}
\newtheorem{lem}[thm]{Lemma}

\theoremstyle{definition}
\newtheorem{defn}[thm]{Definition}

\newtheorem{rem}[thm]{Remark}

\numberwithin{equation}{section}

\begin{document}

\begin{abstract}
  We investigate some general questions in algebraic dynamics in the
  case of generic endomorphisms of projective spaces over a field of
  characteristic zero. The main results that we prove are that a
  generic endomorphism has no non-trivial preperiodic subvarieties,
  any infinite set of preperiodic points is Zariski dense and any
  infinite subset of a single orbit is also Zariski dense, thereby
  verifying the dynamical ``Manin--Mumford'' conjecture of Zhang and
  the dynamical ``Mordell--Lang'' conjecture of Denis and
  Ghioca--Tucker in this case.
\end{abstract}

\maketitle

\section{Introduction}

The goal of this article is to study some aspects of the algebraic
dynamics of generic endomorphisms\footnote{The precise meaning of
  generic endomorphism is given in Definition \ref{def:generic} but we
  note here that when $K = \C$ this means we consider endomorphisms in
  the complement of a countable union of proper subvarieties of the
  natural parameter variety of endomorphisms of degree $d$.}  of
$\P^n$ of degree $d>1$ over a field $K$ of characteristic zero.
Properties of algebraic varieties, for example smooth projective
curves or abelian varieties, are often easier to derive for generic
varieties than for arbitrary varieties, the main reason being that one
has a great deal of freedom in choosing specialisations. It is natural
to expect that the same holds for algebraic dynamical systems; we show
that this is indeed the case for generic endomorphisms of $\P^n$. We
prove three results for such endomorphisms, two of which have
analogues which are expected to hold much more generally but are
presently far from being known.

Our main result is
\begin{thm} \label{thm:igt} %\marginpar{thm:igt}
  Let $f: \P^n_K \to \P^n_K$ be a generic endomorphism of degree $d >
  1$ over an algebraically closed field $K$ of characteristic zero.
  For each $x \in \P^n(K)$, every infinite subset of $O_f(x)$, the
  $f$-orbit of $x$, is Zariski dense in $\P^n_K$.
\end{thm}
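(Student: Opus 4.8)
The plan is to adapt the $p$-adic interpolation method of Bell--Ghioca--Tucker to the ramified setting of $\P^n$, using the genericity of $f$ to produce a place of good reduction along which $f$ behaves like an \'etale map on the relevant residue disc, and then to feed the output into the companion fact (established separately in the paper) that a generic endomorphism of $\P^n$ has no non-trivial preperiodic subvariety. First the reductions. If $x$ is $f$-preperiodic then $O_f(x)$ is finite and there is nothing to prove, so assume it is not; it then suffices to show that every proper closed subvariety $V \subsetneq \P^n_K$ meets $O_f(x)$ in a finite set. Zariski density is unaffected by extension of algebraically closed fields and $f,x,V$ are defined over a subfield $L \subseteq K$ finitely generated over $\Q$, so we may take $K = \ov{L}$ with $L$ finitely generated over $\Q$ and, by the definition of generic, with $[f]$ a generic point of $\Mor_d(\P^n)$ over $\Q$. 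Spreading out, there is a subring $R \subseteq L$ finitely generated over $\Z$ over which $f$ extends to a finite flat endomorphism of $\P^n_R$ and $x$ to a section. I will also use that, for generic $f$ and non-preperiodic $x$, $\ov{O_f(x)} = \P^n$: the Zariski closure of a tail $\{f^n(x) : n \ge m\}$ stabilises to a forward-invariant subvariety which is positive-dimensional (else $x$ would be preperiodic), hence equals $\P^n$ by the no-preperiodic-subvariety theorem.

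Now the arithmetic core. Using genericity, choose a maximal ideal $v$ of $R$, of residue characteristic $p$, such that $f$ has good reduction at $v$, $x$ reduces to $\bar x \in \P^n(k_v)$, and the eventual $\bar f$-periodic cycle of $\bar x$ is disjoint from the critical divisor of $\bar f$. Granting this, complete at $v$ and embed $L \hookrightarrow \C_p$; reduction commutes with $f$, and since the periodic cycle of $\bar x$ avoids the critical locus, $f$ carries the residue polydiscs of the cycle points analytically isomorphically onto one another. Replacing $f$ by an iterate and $x$ by a forward image so that $\bar x$ becomes a fixed point of $\bar f$ lying off the critical locus, and then passing to a further $p$-power iterate so that the reduction of the Jacobian, and in fact $f$ itself, is congruent to the identity on the residue polydisc $D$ of $\bar x$, one obtains an analytic automorphism $h$ of $D$ lying in a one-parameter $p$-adic analytic subgroup: there is an analytic map $\Phi : \Z_p \times D \to D$ with $\Phi(1,\cdot) = h$ and $\Phi(k,\cdot) = h^k$ for $k \in \Z_{\ge 0}$. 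Thus $s \mapsto \Phi(s,x)$ is an analytic map $\Z_p \to D$ interpolating the $h$-orbit of $x$.

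For the conclusion, write $V$, inside an affine chart containing $D$, as the common zero locus of polynomials $G_1, \dots, G_r$. Each $s \mapsto G_i(\Phi(s,x))$ is $p$-adic analytic on $\Z_p$, so by Strassmann's theorem its zero set is finite or all of $\Z_p$. Hence the set of times at which the $h$-orbit of $x$ lies in $V$ is finite, unless every $G_i(\Phi(s,x))$ vanishes identically, in which case $O_f(x) \subseteq V$ and so $\ov{O_f(x)} \subseteq V \subsetneq \P^n$, which we have ruled out. Applying this to each residue class modulo the iteration step shows $\{n : f^n(x) \in V\}$ is a finite union of arithmetic progressions. If it contained an infinite progression $n \equiv r \pmod m$ then $O_{f^m}(f^r(x)) \subseteq V$, so $Z := \ov{O_{f^m}(f^r(x))}$ is $f^m$-invariant with $Z \subseteq V$, and $W := Z \cup f(Z) \cup \cdots \cup f^{m-1}(Z)$ is a proper $f$-invariant subvariety, positive-dimensional unless $x$ is preperiodic, contradicting the no-preperiodic-subvariety theorem. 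Therefore $\{n : f^n(x) \in V\}$ is finite, which is what we wanted.

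The genuinely hard point is the claim in the second paragraph: that genericity of $f$ yields a place of good reduction along which the eventual periodic cycle of $\bar x$ misses the critical locus of $\bar f$. I would try to establish it by specialisation: since $[f]$ is a generic point of $\Mor_d(\P^n)$, the pair $(f,x)$ specialises to $(\bar f, \bar x)$ over every finite field, with $\bar x$ free to range over the fibre of the closure of $(f,x) \to \Mor_d(\P^n)$, so it would suffice to exhibit over some finite field an endomorphism and a point whose forward orbit lands on a cycle disjoint from the critical divisor --- which a dimension count together with Lang--Weil should provide. Failing that one would have to treat the ``semi-attracting'' reductions directly, linearising $f$ along the contracting directions of the cycle, which looks considerably more delicate; I expect this step, or a workable substitute for it, to be the crux, with the remaining steps essentially formal given it and the no-preperiodic-subvariety result.
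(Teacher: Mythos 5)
Your overall architecture (BGT-style $p$-adic interpolation plus the no-preperiodic-subvariety theorem, with the Strassmann/arithmetic-progression endgame) is fine as far as it goes, but the step you yourself flag as the crux is a genuine gap, and your proposed fix does not work. You need a place $v$ at which the eventual periodic cycle of the reduction $\bar x$ of the \emph{given} point $x$ avoids the critical locus of $\bar f$. Your Lang--Weil/dimension-count argument addresses a different question: it would produce \emph{some} pair $(\bar f,\bar x)$ over a finite field with a good cycle, but $\bar x$ is not ``free to range'' --- it is constrained to lie in the fibre over $\bar f$ of the closure of $(f,x)$ in $\Mor_{n,d}\times\P^n$, which is a specific finite set (or specific variety) determined by $x$, not something you get to choose. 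Exhibiting one good pair over a finite field says nothing about whether the specialisations of your particular $x$ have cycles off the critical locus, and no argument is offered that they ever do. This is not a removable technicality: the paper's proof is organised precisely around the failure of this step.

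What the paper does instead is to specialise at a prime $p\mid d$, where $f$ degenerates to the coordinatewise $d$-power map whose differential vanishes identically mod $p$; every residue disc is then attracting, and Lemma \ref{lem:invlim} lifts periodic points of the reduction to periodic points of $f$ lying in $Y$. Combined with Lemma \ref{lem:power} and Lemma \ref{lem:unbounded} (a mod-$p$ unboundedness-of-periods statement for the family $z^d+c$), this shows that either $Y$ contains infinitely many periodic points --- in which case Theorem \ref{thm:zhang}, the dynamical Manin--Mumford statement, forces $Y=\P^n_K$ --- or one can move the reduced periodic point off the ramification locus and only then invoke the BGT lemma (Lemma \ref{lem:bgt}) together with Theorem \ref{thm:inv}. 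Your proposal never uses Theorem \ref{thm:zhang} at all, and it is exactly what is needed in the ``bad reduction of the orbit'' case that your sketch leaves open; likewise the reduction to $K$ finite over $k_{n,d}$ (Lemma \ref{lem:fin}, which itself needs Corollary \ref{cor:ram} and a Witt-vector argument) is missing. So the remaining steps are indeed essentially formal, but the missing step is the theorem's actual content, and your fallback remark about ``linearising along the contracting directions'' is, in effect, the route the paper takes --- at the cost of the additional machinery (Lemmas \ref{lem:invlim}, \ref{lem:power}, \ref{lem:unbounded} and Theorem \ref{thm:zhang}) that your write-up omits.
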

This implies the dynamical ``Mordell--Lang'' conjecture of Denis
\cite{denis-suites} and Ghioca--Tucker \cite{ghioca-tucker-conj} for
generic endomorphisms. This conjecture has been proved for etale
endomorphisms of arbitrary varieties by Bell, Ghioca and Tucker
\cite{bell-ghioca-tucker} but there are only a few other cases where
it is known.  The proof of this theorem is based on two other
results. The first is
\begin{thm} \label{thm:iinv} %\marginpar{thm:iinv}
  Let $f:\P^n_K \to \P^n_K$ be a generic endomorphism of degree $d >
  1$ over an algebraically closed field $K$ of characteristic zero. If
  $X \subset \P^n_K$ is an irreducible subvariety such that $f^r(X) =
  X$ for some $r > 0$, then $X$ is a point or $X = \P^n_K$.
\end{thm}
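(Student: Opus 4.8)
The plan is to argue by degeneration/specialization: one works over the universal parameter space $\mathrm{Mor}_d(\P^n)$ parametrizing degree-$d$ endomorphisms, and shows that for each fixed pair of integers $(r,m)$ (with $m = \dim X$, $0 < m < n$) the locus of parameters admitting an irreducible $f^r$-invariant subvariety of dimension $m$ is contained in a proper closed subset; taking the union over all $(r,m)$ then exhibits the bad locus as a countable union of proper subvarieties, which is exactly what ``generic'' is designed to avoid (Definition \ref{def:generic}). So the real content is: for a \emph{single} $r$, produce one endomorphism (equivalently, a dense open set of them) with no $m$-dimensional $f^r$-invariant irreducible subvariety.

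To produce such an example I would pass to a convenient subfamily where the dynamics is understood, the natural candidate being a product/power construction or a ``split'' map built from a one-variable endomorphism. Concretely, pick coordinates and consider $f = f^r$ replaced by a map whose $r$-th iterate one can analyze; invariant subvarieties of $f^r$ for \emph{all} $r$ are controlled by preperiodic subvarieties, and it suffices to kill $f$-invariant ones after replacing $f$ by a suitable iterate and by passing from $f$ to a nearby generic perturbation. The cleanest route is a dimension count on an incidence variety: let $\mathcal{I} \subset \mathrm{Mor}_d(\P^n) \times \mathrm{Hilb}^{P}(\P^n)$ be the locus of pairs $(f,[X])$ with $f^r(X) = X$, where $P$ ranges over Hilbert polynomials of dimension $m$; the fiber of $\mathcal{I}$ over a fixed $[X]$ is the set of $f$ with $f^r(X)=X$, and one shows this fiber has codimension growing with $r$ (or at least positive and bounded below uniformly), so that $\mathcal{I}$ cannot dominate $\mathrm{Mor}_d(\P^n)$. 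The key numerical input is that requiring $f(X) \subseteq X$ already imposes roughly $(d^m - 1)$ times the ``expected'' number of conditions on the graph of $f$ restricted to $X$ — intuitively because $f|_X$ must be an endomorphism of $X$ of degree $d^{?}$ compatible with the polarization, and the space of such is far smaller than the space of all $f$ agreeing with prescribed data on $X$.

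The main obstacle I anticipate is making the codimension estimate \emph{uniform in $r$} while ranging over all Hilbert polynomials $P$ of a fixed dimension $m$: a priori the Hilbert scheme has infinitely many components, and one needs to bound the degree of a periodic $X$ in terms of $r$ (or show the estimate is insensitive to $\deg X$). I would handle this by using the canonical height / arithmetic degree machinery, or the elementary bound that $f^r(X) = X$ forces $(\deg f^r)^{\dim X} \cdot \deg X = \deg f^r(X) = \deg X$ only when... — wait, that identity $\deg f^r(X) = d^{r \cdot \operatorname{codim}} \deg X$ forces $d^{r(n-m)} = 1$, a contradiction, \emph{unless} $X$ is linear or $f$ is special. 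Actually this degree identity is the crux: for a finite morphism $f$ of degree $d^n$ on $\P^n$ with $f^*\mathcal{O}(1) = \mathcal{O}(d)$, one has $\deg(f(X)) = d^{m}\deg(X)$ (projection formula with $f_* f^* = d^n$ and careful bookkeeping), so $f^r(X) = X$ gives $d^{rm} = 1$, impossible for $d > 1$, $m > 0$ — \emph{provided} $f|_X$ is generically finite onto its image, i.e. $f(X) = X$ honestly rather than $f|_X$ contracting. So the genuine remaining case is when $f|_X$ is \emph{not} generically finite, i.e. $f$ contracts $X$ or a subvariety; ruling this out for generic $f$ is where the specialization argument does real work, because ``$f$ contracts some positive-dimensional subvariety'' is itself a nontrivial closed condition on $\mathrm{Mor}_d(\P^n)$ (generic finite maps of $\P^n$ have no positive-dimensional fibers). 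Thus the proof structure is: (i) generic $f$ has all fibers finite (a classical open condition, witnessed by any one example); (ii) for such $f$ and any $r$, the degree identity forbids positive-dimensional $f^r$-periodic $X$; (iii) conclude, with the only ``countable union'' bookkeeping being the trivial one. If step (ii) as I've sketched it is too naive — e.g. if $f(X)=X$ but $f|_X$ has degree strictly between $1$ and $d^m$ — I would instead invoke the normalized/arithmetic degree $\alpha_f(X)$ and its multiplicativity under iteration to force $\alpha_f(X)^r$-type growth, contradicting $f^r(X)=X$; that is the fallback if the pure projection-formula argument leaves a gap.
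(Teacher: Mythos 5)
The central step of your argument, the degree identity, is incorrect. For a finite endomorphism $f$ of $\P^n$ with $f^*\mathcal{O}(1)=\mathcal{O}(d)$ and $X\subset\P^n$ irreducible of dimension $m$, the projection formula gives $\deg(f|_X)\cdot\deg(f(X)) = d^{m}\deg(X)$, not $\deg(f(X))=d^m\deg(X)$: you have dropped the factor $\deg(f|_X)$, the degree of $X\to f(X)$. So $f^r(X)=X$ does not force $d^{rm}=1$; it merely forces $\deg(f^r|_X)=d^{rm}$, which is entirely possible. Moreover the ``remaining case'' you propose to handle by genericity is empty: every point of $\Mor_{n,d}$ is a finite (indeed finite flat) morphism of $\P^n$, so $f|_X$ is always finite onto its image and there is no contraction case, and your step (i) (``generic $f$ has finite fibres'') is true for \emph{all} $f$ and carries no genericity content. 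Consequently your steps (i)+(ii) would prove that \emph{no} endomorphism of $\P^n$ of degree $d>1$ has a positive-dimensional periodic subvariety, which is false: the power map $g_{n,d}:[x_0,\dots,x_n]\mapsto[x_0^d,\dots,x_n^d]$ preserves every coordinate linear subspace, and on such an $X$ of dimension $m$ one checks $\deg(g_{n,d}|_X)=d^m$, consistent with the corrected identity. Since invariant subvarieties abound for special endomorphisms, no purely numerical or intersection-theoretic argument that ignores the specific $f$ can succeed; genericity must do real work. Your fallback via arithmetic/dynamical degrees fails for the same reason. The incidence-variety plan in your first paragraph is where genericity could in principle enter, but the codimension estimate uniform in $r$ and over all Hilbert polynomials of dimension $m$ is exactly the hard point, and you do not supply it; as written the proposal has no complete path to the theorem.

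For comparison, the paper's proof is arithmetic rather than numerical: after reducing (by specialisation and taking Galois conjugates) to $X$ defined over the function field $k_{n,d}$ of $\Mor_{n,d/k}$, it invokes \cite[Theorem 5.1]{nf-selfmaps} to produce infinitely many periodic points on the invariant subvariety $X$, then uses the transitivity of the Galois action on $\Per_{f_{n,d}}(b)$ (Proposition \ref{prop:galois}, resting on the Bousch--Lau--Schleicher--Morton theorem) to conclude that $X$ contains \emph{all} periodic points of infinitely many periods $b_i$; specialising to the power map $g_{n,d}$ via Lemma \ref{lem:lift}, $X$ specialises to a subvariety containing the full Cartesian powers $(\Per_{\phi_0}(b_i))^n$ in an affine chart, and a subvariety of $\A^n$ containing $S^n$ for arbitrarily large finite sets $S$ must be all of $\A^n$. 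If you want to salvage your approach, you would need an input of this kind---some mechanism that exploits which point of $\Mor_{n,d}$ the generic $f$ is---rather than a degree count valid for every $f$.
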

This is a rather straightforward consequence of the transitivity of
the monodromy action on the set of periodic points of a fixed period of a
generic endomorphism, which we prove (Proposition \ref{prop:galois})
using a result of Bousch \cite{bousch-thesis}, Lau--Schleicher
\cite{lau-schleicher} and Morton \cite{morton} for polynomials in one
variable of the form $z \mapsto z^d + c$.

\smallskip

We then extend Proposition \ref{prop:galois} to prove transitivity of
the monodromy action on the set of \emph{preperiodic} points of fixed
period and preperiod of a generic endomorphism (Proposition
\ref{prop:prep}); this does not hold for the $1$-parameter family of
polynomials mentioned above and we use a $2$-parameter family
containing this.  This allows us to prove Zhang's ``Manin--Mumford''
conjecture \cite{ghioca-tucker-zhang} in the case of generic
endomorphisms in the following strong form.

\begin{thm} \label{thm:izhang} %\marginpar{thm:izhang}
  For $f: \P^n_K \to \P^n_K$ a generic endomorphism of degree $d > 1$ over
  an algebraically closed field $K$ of characteristic zero, any
  infinite subset of $\P^n(K)$ consisting of $f$-preperiodic points is
  Zariski dense in $\P^n_K$.
\end{thm}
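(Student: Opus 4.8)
The plan is to combine Proposition~\ref{prop:prep} with the equidistribution of preperiodic points of large period: a proper subvariety meeting the set of preperiodic points of a fixed preperiod and period must, once it is made Galois‑stable, contain \emph{all} of them, and that is impossible once the preperiod plus period is large. First I would reduce to a situation with a genuine Galois action. By Definition~\ref{def:generic} I may assume that $f=\mathbf f_\eta$ is the generic fibre of the universal endomorphism $\mathbf f\colon\P^n_V\to\P^n_V$ over the parameter variety $V$, base changed to $K=\ov{k(V)}$ with $k$ algebraically closed, so that $G:=\Gal(\ov{k(V)}/k(V))$ acts on $\P^n(K)$ fixing $f$; then Proposition~\ref{prop:prep} (together with Proposition~\ref{prop:galois} in the periodic case) says precisely that $G$ acts transitively on each of the finite sets $\prep_{m,r}(f)$ of preperiodic points of $f$ of exact preperiod $m$ and exact period $r$. (A generic endomorphism over an arbitrary algebraically closed field of characteristic zero, when the statement is not vacuous, is a base change of this one, and since the preperiodic points of $f$ already lie in $\P^n(\ov{k(V)})$, a proper subvariety containing infinitely many of them descends — after being replaced by the Zariski closure of those points — to such a subvariety over $\ov{k(V)}$; so this case suffices.) Now suppose for contradiction that $S\subset\P^n(K)$ is an infinite set of preperiodic points with $Z_0:=\ov S\subsetneq\P^n_K$. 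The Hilbert point classifying $Z_0$ is defined over a finite extension of $k(V)$, hence has a finite $G$‑orbit, so $\wt Z:=\bigcup_{\sigma\in G}\sigma Z_0$ is a $G$‑stable — hence $k(V)$‑rational — proper closed subvariety of $\P^n_K$ which still contains $S$.

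The monodromy input enters next. Since $\prep(f)=\bigsqcup_{m\ge 0,\,r\ge 1}\prep_{m,r}(f)$ is a disjoint union of finite sets meeting the infinite set $S$, there are pairs $(m,r)$ with $m+r$ arbitrarily large for which $\wt Z\cap\prep_{m,r}(f)\neq\emptyset$. For such a pair, $\wt Z\cap\prep_{m,r}(f)$ is a non‑empty subset of $\prep_{m,r}(f)$ stable under $G$ (both $\wt Z$ and $\prep_{m,r}(f)$ are), so by the transitivity in Proposition~\ref{prop:prep} it is all of $\prep_{m,r}(f)$; that is, $\prep_{m,r}(f)\subset\wt Z$ for pairs $(m,r)$ with $m+r$ unbounded.

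To reach a contradiction I would use equidistribution. Spread $\wt Z$ out to a subscheme of $\P^n_{V'}$ over a dense open $V'\subseteq V$ and specialise at a suitably general complex point $v\in V'(\C)$: one obtains a proper subvariety $\wt{\mathcal Z}_v\subsetneq\P^n_\C$, of degree equal to $\deg\wt Z$ (in particular independent of $v$ and of $(m,r)$), with $\prep_{m,r}(\mathbf f_v)\subset\wt{\mathcal Z}_v$ for every pair $(m,r)$ as above. But as $m+r\to\infty$ the normalised counting measures of the sets $\prep_{m,r}(\mathbf f_v)$ converge weakly to the measure of maximal entropy $\mu_{\mathbf f_v}$ of $\mathbf f_v$ — this follows from the classical equidistribution theorems for iterated preimages and for periodic points, since $\prep_{m,r}$ is, asymptotically, a definite proportion of $\{x:\mathbf f_v^{m+r}(x)=\mathbf f_v^{m}(x)\}$ equidistributing to the same limit — and $\mu_{\mathbf f_v}$ assigns no mass to any proper algebraic subvariety. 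Hence the proportion of $\prep_{m,r}(\mathbf f_v)$ contained in the fixed subvariety $\wt{\mathcal Z}_v$ tends to $0$, so $\prep_{m,r}(\mathbf f_v)\not\subset\wt{\mathcal Z}_v$ once $m+r$ is large, contradicting the previous paragraph and completing the proof.

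Granting Proposition~\ref{prop:prep}, every step here is formal except the last one, namely the assertion that a fixed proper subvariety of $\P^n$ cannot contain $\prep_{m,r}(f)$ for $m+r$ large. I expect this to be the main point to handle with care, both because it is where transcendental (equidistribution) methods enter and because it must be transported, via specialisation, from complex endomorphisms back to the generic fibre over $\ov{k(V)}$; the reduction to the generic fibre — which is what turns ``transitivity of the monodromy'' into transitivity of an honest Galois action on a finite set — is the other place requiring attention, although I expect it to be routine.
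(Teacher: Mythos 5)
The first half of your argument coincides with the paper's: reduce to the generic fibre over $k_{n,d}$, replace the Zariski closure of the infinite set by the union of its Galois conjugates so that it is defined over $k_{n,d}$, and use transitivity (Propositions \ref{prop:galois} and \ref{prop:prep}) to conclude that $\prep_{f_{n,d}}(a_i,b_i)$ is entirely contained in your $\wt Z$ for infinitely many distinct pairs $(a_i,b_i)$. (The fact you use parenthetically, that all preperiodic points already lie over $\ov{k_{n,d}}$, is exactly Lemma \ref{lem:lift}(1) in the paper.) The divergence is in the last step, and that is where there is a genuine gap. First, the reduction ``$\prep_{m,r}$ is a definite proportion of $\{x:\mathbf f_v^{m+r}(x)=\mathbf f_v^{m}(x)\}$, which equidistributes, hence $\prep_{m,r}$ equidistributes'' is not valid as stated: a positive-proportion subset of an equidistributing sequence of sets need not equidistribute (the $N$-th roots of unity lying in the upper half circle are a proportion $1/2$ of an equidistributing set and converge to the wrong measure). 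To make this work you need the whole family $\{x:\mathbf f_v^{a+b}(x)=\mathbf f_v^{a}(x)\}$ to equidistribute as $a+b\to\infty$ and then argue by inclusion--exclusion. Second, that family's equidistribution is not an off-the-shelf classical theorem in the regime you need: when the period $r$ stays bounded and the preperiod $m\to\infty$ it is the assertion that iterated preimages of the period-$r$ cycles equidistribute, which fails when a cycle meets the exceptional set (e.g.\ for the power map, whose fixed points on the coordinate hyperplanes are exceptional), so ``suitably general $v$'' has to be made precise (say by choosing $v$ generic over a countable $k$ and invoking Theorem \ref{thm:inv}(1) to exclude totally invariant subvarieties); when $m$ and $r$ grow together one further needs uniformity in the base point; and the passage from cycle-counting with multiplicities to point sets needs the preperiodic points to avoid the critical locus (Corollary \ref{cor:ram}). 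None of this is obviously fatal, but it is the entire content of your final step and is asserted rather than proved or precisely cited.

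For comparison, the paper closes the argument with no analysis at all: it specialises $X$ (your $\wt Z$) over the point of $\Mor_{n,d}$ corresponding to the power map $g_{n,d}$, uses Lemma \ref{lem:lift}(2) to see that the specialised variety still contains $\prep_{g_{n,d}}(a_i,b_i)$, observes that this set contains the full $n$-fold product $\{[\xi_0,\dots,\xi_{n-1},1]:\xi_j\in\prep_{\phi_0}(a_i,b_i)\}$, and applies the elementary product-set lemma (\cite[Theorem 9.2]{tao-vu-ac}, as in the proof of Theorem \ref{thm:inv}) to force the specialisation, hence $X$ itself, to be all of $\P^n$. If you keep your structure but replace the equidistribution step by this specialisation-to-$g_{n,d}$ argument, the proof closes with purely algebraic tools and without having to justify any new complex-dynamical input.
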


We prove Theorem \ref{thm:igt} by combining Theorems \ref{thm:iinv}
and \ref{thm:izhang} with some $p$-adic as well as mod $p$ arguments.
Note that the statement does not invove (pre)periodic points in any
way. However, using a lifting argument for periodic points we show
that any subvariety $Y$ containing an infinite subset of $O_f(x)$ must
contain infinitely many periodic points, or $x$ can be specialised in
such a way that one may apply the $p$-adic interpolation argument used
by Bell, Ghioca and Tucker in their proof of the conjecture for etale
endomorphisms in \cite{bell-ghioca-tucker}.  Theorem \ref{thm:izhang}
and Theorem \ref{thm:iinv} then force $Y$ to be equal to $\P^n_K$ in
either of these cases.

\bigskip

\noindent {\bf Acknowledgements.}
I thank Ekaterina Amerik, Amitava Bhattacharya, Bjorn Poonen, Joseph
Silverman and Thomas Tucker for useful conversations and comments. I
also thank the referee for his/her suggestions for improvements in the
exposition.

\section{Preliminaries}

\subsection{}
Let $X$ be a set and $f:X \to X$ any map. By $f^n$ we shall mean the
$n$-fold composite of $f$ with itself. For $x \in X$ we denote by
$O_f(x)$ its orbit under $f$, \emph{i.e.}, the set $\{f^n(x) \}_{n \geq
  0}$. A point $x \in X$ is said to be $f$-\emph{periodic} if $f^n(x)
= x$ for some $n>0$. The smallest such integer is called the
\emph{period} of $x$. We denote the set of all periodic points of
period $b$ by $\Per_f(b)$. A point $x \in X$ is said to
$f$-\emph{preperiodic} if $O_f(x)$ is finite.  The \emph{preperiod} of
$f$ is the smallest non-negative integer $a$ such that $f^a(x)$ is
periodic and the \emph{period} of $x$ is the period of any periodic
point in its orbit.  We denote by $\prep_f(a,b)$ the set of all such
points. Let $\Orb_f(b)$ denote the set of orbits of $f$-periodic
points of period $b$. If this is finite then $|\Per_f(b)| = b \cdot
|\Orb_f(b)|$.  We drop $f$ from any of the notation introduced above
if there is no scope for confusion.

If $X$ is an algebraic variety over a field $K$ and $f:X \to X$ is a
morphism over $K$, we use the same notation as above for the induced
map on the set of $L$-rational points of $X$ for any extension field
$L$ of $K$.

\subsection{} \label{subsec:mono}

Let $S$ be a smooth irreducible variety over a field $k$ and let $g:Z
\to S$ be a finite flat morphism. By the \emph{monodromy} or
\emph{Galois action} of $g$ we shall mean the action of
$\Gal(\ov{k(S)}/k(S))$ on (the points of) a geometric generic fibre of
$g$. If $g$ is generically smooth---this is always true if $\ch(k)=0$
and $Z$ is reduced---there is a Zariski open subset $U$ of $S$ such
that $g$ induces a finite etale morphism $g^{-1}(U) \to U$ and then
the monodromy may be interpreted as an action of $\pi_1^{et}(U, *)$,
where $*$ is a geometric point of $U$. If $k = \C$, it may be
interpreted as an action of $\pi_1^{top}(U, *)$.

The monodromy action is transitive iff $Z$ is irreducible. If $Z$ is
generically smooth, this is equivalent to $Z^0$, the smooth locus of
$Z$, being connected or, if $k = \C$, path connected.

\subsection{} \label{subsec:p}

\begin{defn} \label{def:qp} %\marginpar{def:qp}
  Let $\pi:\mc{X} \to S$ be a projective morphism and $f:\mc{X} \to
  \mc{X}$ a surjective morphism over $S$. We say that $f$ is
  \emph{quasi-polarised} if there exists a line bundle $\mc{L}$ on
  $\mc{X}$ such that $f^*(\mc{L}) \otimes \mc{L}^{-1}$ is $S$-ample.
\end{defn}

For any morphism $\pi:\mc{X} \to S$ and $f:\mc{X} \to \mc{X}$ a
morphism over $S$, we denote by $\Gamma_f$ the graph of $f$ in $\mc{X}
\times_S \mc{X}$.  Let $P_f(n)$ be the closed subscheme of $\mc{X}$
defined by the intersection of $\Gamma_{f^n}$ with the diagonal. A
geometric point of the fibre of $P_f(n)$ over any point $s \in S$ is a
periodic point of period dividing $n$ of the map $f_s$ of $\mc{X}_s$
induced by $f$. Similarly, let $P_f(m,n)$ be the intersection of
$\Gamma_{f^{m}}$ and $\Gamma_{f^{n}}$ which we view as a subscheme of
$\mc{X}$ via the first projection.

\begin{lem} \label{lem:lift} %\marginpar{lem:lift}
  Let $\pi:\mc{X} \to S$ be a smooth projective morphism with $S$ a
  regular irreducible finite dimensional scheme and let $f:\mc{X} \to
  \mc{X}$ be a finite quasi-polarised morphism. Then
\begin{enumerate}
\item For any $m,n \geq 0$, $m \neq n$, $P_f(m,n)$ is finite and flat
  over $S$.
\item For any $s_1, s_2$ in $S$ with $s_2$ a specialisation of $s_1$,
  any element of $\prep_{f_{s_2}}(a,b)$ can be lifted to an element of
  $\prep_{f_{s_1}}(a,b)$.
\end{enumerate}
\end{lem}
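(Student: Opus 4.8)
The plan is to establish the two parts in order, since the flatness in (1) is exactly what drives the lifting in (2). Let $e$ be the relative dimension of $\pi$, so that $\mc X$ is regular (being smooth over the regular scheme $S$) and the diagonal $\Delta:=\Delta_{\mc X/S}\subset\mc X\times_S\mc X$ is a regular immersion of codimension $e$. Under the isomorphism $\Gamma_{f^n}\cong\mc X$ given by the first projection, $P_f(m,n)$ is identified with the scheme-theoretic preimage $(f^n,f^m)^{-1}(\Delta)$, where $(f^n,f^m)\colon\mc X\to\mc X\times_S\mc X$; thus $P_f(m,n)$ is a closed subscheme of $\mc X$, locally cut out by $e$ equations, and projective (hence proper) over $S$.

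For (1), it then suffices to show $P_f(m,n)\to S$ is quasi-finite. Fixing $s\in S$ and assuming $m>n$ (which we may, since $P_f(m,n)=P_f(n,m)$), I would set $g:=f_s^{m-n}$, a finite self-map of $\mc X_s$, and use the telescoping identity $g^*\mc L_s\otimes\mc L_s^{-1}=\bigotimes_{i=0}^{m-n-1}(f_s^i)^*\bigl(f_s^*\mc L_s\otimes\mc L_s^{-1}\bigr)$, which exhibits $g^*\mc L_s\otimes\mc L_s^{-1}$ as a tensor product of pullbacks of the ample bundle $f_s^*\mc L_s\otimes\mc L_s^{-1}$ along the finite morphisms $f_s^i$, hence ample on $\mc X_s$; restricted to the fixed-point scheme $\mr{Fix}(g)$ it becomes trivial, forcing $\mr{Fix}(g)$ to be $0$-dimensional. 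Since the underlying set of $P_f(m,n)_s$ is $(f_s^n)^{-1}(\mr{Fix}(g))$ and $f_s^n$ is finite, the fibre is finite, so $P_f(m,n)\to S$ is finite. For flatness, quasi-finiteness over $S$ forces $\dim\mc O_{P_f(m,n),x}\le\dim\mc O_{S,\pi(x)}$ at every point $x$, whereas $\mc O_{\mc X,x}$ is regular of dimension $\dim\mc O_{S,\pi(x)}+e$ (the residue field of $x$ being finite over that of $\pi(x)$); Krull's Hauptidealsatz then forces the $e$ local equations of $P_f(m,n)$ in $\mc X$ to be a regular sequence, so $P_f(m,n)$ is a local complete intersection in the regular scheme $\mc X$, in particular Cohen--Macaulay, with all fibres over $S$ of dimension $0$, and the miracle flatness criterion over the regular base $S$ yields flatness.

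For (2), put $P:=P_f(a+b,a)$, finite and flat over $S$ by (1). Base-changing to $\Spec\mc O_{S,s_2}$ gives a finite flat ring extension $\mc O_{S,s_2}\to B$ with $\Spec B=P\times_S\Spec\mc O_{S,s_2}$, which therefore satisfies going-down. The given $x\in\prep_{f_{s_2}}(a,b)$ satisfies $f_{s_2}^{a+b}(x)=f_{s_2}^{a}(x)$ and so defines a point of the fibre $P_{s_2}$, i.e. a prime of $B$ over $\mathfrak m_{s_2}$; applying going-down along the generisation $s_1$ of $s_2$ produces a point $\wt x$ of $P$ lying over $s_1$ with $x\in\ov{\{\wt x\}}$. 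Then $f_{s_1}^{a+b}(\wt x)=f_{s_1}^{a}(\wt x)$, so $\wt x$ is $f_{s_1}$-preperiodic with preperiod $a'\le a$ and period $b'$ dividing $b$.

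It remains to rule out $a'<a$ and $b'<b$, the point being that such a drop would be inherited by the specialisation $x$. If $b'<b$, then $f_{s_1}^{a}(\wt x)$ lies on the $b'$-cycle of $f_{s_1}^{a'}(\wt x)$, so $f_{s_1}^{a+b'}(\wt x)=f_{s_1}^{a}(\wt x)$, i.e. $\wt x\in P_f(a+b',a)$; as this is closed in $\mc X$ it contains $\ov{\{\wt x\}}\ni x$, so $f_{s_2}^{a+b'}(x)=f_{s_2}^{a}(x)$, making the period of $x$ divide $b'<b$ --- contradicting $x\in\prep_{f_{s_2}}(a,b)$. If $a'<a$, then $f_{s_1}^{a'}(\wt x)$ is periodic of period $b$, so $f_{s_1}^{a'+b}(\wt x)=f_{s_1}^{a'}(\wt x)$, i.e. $\wt x\in P_f(a'+b,a')$; this is again closed and contains $x$, so $f_{s_2}^{a'+b}(x)=f_{s_2}^{a'}(x)$, forcing the preperiod of $x$ to be at most $a'<a$, a contradiction. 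Hence $\wt x\in\prep_{f_{s_1}}(a,b)$ lifts $x$. The finiteness in (1) and this period--preperiod bookkeeping are routine; the hard part will be the flatness in (1) --- one must check that $P_f(m,n)$, which need not be reduced, still has codimension exactly $e$ in $\mc X$ at every point, and it is precisely the quasi-polarisation (through quasi-finiteness over $S$ and the Hauptidealsatz) that ensures this.
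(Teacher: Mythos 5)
Your proof is correct and follows essentially the same line as the paper's: you view $P_f(m,n)$ as cut out by $e$ equations (the paper words it as a codimension-$2e$ intersection of graphs in $\mc X\times_S\mc X$, but it is the same count), establish quasi-finiteness via the same telescoping line-bundle argument showing an ample bundle becomes trivial on the fixed locus, deduce flatness via local complete intersection $\Rightarrow$ Cohen--Macaulay plus miracle flatness over the regular base, and obtain (2) from going-down together with the observation that preperiod and period can only drop under specialisation. The only cosmetic differences are that you run the ampleness computation on a single fibre $\mc X_s$ via $g=f_s^{m-n}$ rather than on $P_f(m,n)$ globally, and that you make explicit the bookkeeping ruling out a strict drop of preperiod or period, which the paper dispatches in one sentence.
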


\begin{proof}
  Since $f$ is proper, $P_f(m,n)$, being a closed subscheme of
  $\mc{X}$, is also proper over $S$.  The dimension of each
  irreducible component of $P_f(m,n)$ is at least equal to $\dim(S)$
  since the codimension of $\Gamma_{f^i}$ in $\mc{X} \times_S \mc{X}$
  is the relative dimension of $\mc{X}$ over $S$. To prove $P_f(m,n)$
  is finite over $S$ it suffices to show that the fibres of this map
  are finite since proper quasi-finite morphisms are
  finite. Furthermore, the finiteness of the fibres implies that the
  dimension of each component of $P_f(m,n)$ is exactly $\dim(S)$,
  hence $P_f(m,n)$ is a local complete intersection in $\mc{X}
  \times_S \mc{X}$) which is regular, which implies that $P_f(m,n)$ is
  Cohen--Macaulay.  Since the dimension of each irreducible component
  of $P_f(m,n)$ is at least $\dim(S)$, each such component dominates
  $S$ if all the fibres are finite. It then follows from the fibrewise
  flatness criterion that all of (1) is a consequence of the
  finiteness of the fibres.

  Let $\mc{L}$ be a line bundle on $\mc{X}$ so that $M = f^*(\mc{L})
  \otimes \mc{L}^{-1}$ is ample. Then $(f^m)^*(\mc{L}) = \mc{L}
  \otimes M \otimes f^*(M) \otimes \dots (f^{m-1})^* (M)$ and
  similarly for $n$. By the construction of $P_f(m,n)$,
  $(f^m)^*(\mc{L})$ and $(f^m)^*(\mc{L})$ restrict to the same line
  bundle on it, so assuming $m >n$ without loss of generality, we get
  that $(f^{m-1})^*(M)\otimes \dots \otimes (f^n)^*(M)$ is trivial on
  $P_f(m,n)$. But $M$ is ample, hence so is $(f^i)^*(M)$ for all $i
  \geq 0$ since $f$ is finite. There is at least one factor in the
  above tensor product of line bundles so this is only possible if all
  the fibres are finite.

  If $x \in \prep_{f_{s_2}}(a,b)$, then $x$ occurs in the fibre of
  $P_f(a+b,a)$ over $s_1$ and does not occur in the fibre of any
  $P_f(m,n)$ for $m<a+b$ or $n< a$. By (1), there is a point
  $\tilde{x}$ in the fibre of $P_f(a+b,a)$ specialising to $x$. By the
  definition of $P_f(m,n)$, it follows that $f_{s_1}^{a+b}(\tilde{x})
  = f_{s_1}^{a}(\tilde{x})$, so $\wt{x}$ is preperiodic with preperiod
  $\leq a$ and period $\leq b$. Since neither the preperiod nor the
  period can increase under specialisation, and $\wt{x}$ specializes
  to $x$, (2) follows.
\end{proof}

\section{Periodic points and periodic subvarieties}

\subsection{}

Let $\Mor_{n,d}$ be the scheme over $\Z$ representing morphisms of
$\P^n_{\Z}$ to itself of algebraic degree $d$. Its $k$-valued points,
for any field $k$, consist of $n+1$-tuples of homogenous polynomials
of degree $d$ over $k$ without common zeros in $\P^n_k$, up to a
scalar. It is smooth and of finite type over $\Z$ and has
geometrically irreducible fibres. For any field $L$, we denote
$\Mor_{n,d} \times_{\Spec(\Z)} \Spec(L)$ by $\Mor_{n,d/L}$.

\begin{defn} \label{def:generic} %\marginpar{def:generic}
  If $K$ is an algebraically closed field, we say that an endomorphism
  $f$ of $\P^n_K$ is \emph{generic} if the image of the induced map
  $\Spec(K) \to \Mor_{n,d}$ corresponding to some conjugate of $f$ by
  an element of $PGL_{n+1}(K) = \mr{Aut}(\P^n_K)$ is the generic point
  of a fibre of the structure morphism $\Mor_{n,d} \to \Spec(\Z)$.
\end{defn}

If $K= \C$, the set of points in $\Mor_{n,d}(\C)$ corresponding to
generic morphisms is the complement of a countable union of proper
subvarieties.

\subsection{}

We recall the theorem of Bousch \cite[Chapitre 3, Th\'eor\`eme
4]{bousch-thesis}, Lau--Schleicher \cite[Theorem 4.1]{lau-schleicher}
and Morton \cite[Theorem D]{morton} alluded to earlier; the statement
below is \cite[Theorem 10]{morton}, except that we have replaced the
field $\ov{\Q}$ there by $k$.

\begin{thm} \label{thm:morton} %\marginpar{thm:morton}
  Let $k$ be a field of characteristic zero and let $f(z) = z^d + t$
  with $t$ transcendental over $k$ and $d \geq 2$. For any $b \geq 1$,
  the Galois group of the polynomial $f^b(z) - z$ over $k(t)$ is the
  direct product $\prod_{e \mid b} (\Z/e\Z \ \mr{wr} \ S_{r_e})$, where
  $e \cdot r_e$ is the number of periodic points of period $e$ over
  $\ov{k(t)}$ and $\mr{wr}$ denotes the wreath product.
\end{thm}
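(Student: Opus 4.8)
The plan is to compute the geometric monodromy of the cover $C_b:=\{f^b(z)=z\}\to\A^1_t$ and then descend. A base-change argument reduces matters to $k=\C$: since $C_b$ is defined over $\Q$, the geometric monodromy over $\ov{k}(t)$ is the same as over $\ov{\Q}(t)$, hence (fixing $\ov{\Q}\subset\C$) the same as the topological monodromy over $\C(t)$; and the Galois group $G$ of $f^b(z)-z$ over $k(t)$ is squeezed between the geometric monodromy group and the ``upper bound'' group $W$ described next, so it equals $W$ as soon as the monodromy over $\C(t)$ is all of $W$. For the upper bound, the dynatomic factorisation $f^b(z)-z=\prod_{e\mid b}\Phi_e(z)$ with $\Phi_e\in\Z[t][z]$ shows that $G$ preserves the set $R_e$ of roots of $\Phi_e$---the points of exact period $e$---and for $t$ transcendental each $\Phi_e$ is separable of degree $N_e:=\sum_{k\mid e}\mu(e/k)d^{k}=e\,r_e$. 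Since $f$ is defined over the base, $G$ commutes with $f|_{R_e}$, which is a product of $r_e$ disjoint $e$-cycles, and the centraliser of such a permutation in $\mr{Sym}(R_e)$ is exactly $\Z/e\Z\wr S_{r_e}$. Thus $G\hookrightarrow W:=\prod_{e\mid b}(\Z/e\Z\wr S_{r_e})$, and the theorem asserts this is onto.

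I would prove surjectivity in stages. Stage one: for each $e\mid b$ the image of $G$ in the factor $S_{r_e}$, acting on the set $\Orb_e$ of period-$e$ orbits, is all of $S_{r_e}$. The coefficients of the orbit polynomials $\prod_{\alpha\in O}(z-\alpha)$, $O\in\Orb_e$, generate a subextension of the splitting field that is Galois over $\C(t)$ with group this image. It is transitive because the dynatomic curve $\{\Phi_e(t,z)=0\}$ is irreducible (Bousch, Lau--Schleicher), and primitive by the refinement of that statement asserting the absence of intermediate covers of the $\Orb_e$-curve. Finally it contains a transposition: at a general zero of $\mr{disc}_z\Phi_e(t,z)$ at which two distinct period-$e$ orbits collide, the $\Orb_e$-curve acquires a simple ramification point, and the local monodromy there is a transposition. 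A primitive transitive subgroup of $S_{r_e}$ containing a transposition is $S_{r_e}$ (Jordan), which settles this stage.

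Stage two (the cyclic factor): with $S_{r_e}$ in the image, the kernel of $G\to S_{r_e}$ is an $S_{r_e}$-stable subgroup of $(\Z/e\Z)^{r_e}$ on which $S_{r_e}$ acts by permuting coordinates, hence equals everything as soon as it contains one ``single-orbit'' rotation $\delta_i$ by a generator of $\Z/e\Z$. Such an element arises as the local monodromy around a parameter where one period-$e$ orbit degenerates entirely onto a fixed point: its $e$ points then coalesce, and a loop rotates their cyclic labelling by a generator while fixing the other orbits. Stage three (joint surjectivity): given surjectivity onto each factor $\Z/e\Z\wr S_{r_e}$, a Goursat-type argument reduces the possible correlations between distinct factors to their abelian quotients $\Z/e\Z\times\Z/2\Z$, so one must check that the ``sign'' characters---the classes of $\mr{disc}(\Phi_e)$ in $\C(t)^{*}/(\C(t)^{*})^{2}$---together with the ``total rotation'' characters, as $e$ ranges over the divisors of $b$, are $\Z$-independent. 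This follows by inspecting the supports of the relevant branch divisors: a period-$e$ collision occurs at parameters where the period-$e'$ points stay distinct for $e'\neq e$, so the divisors have disjoint support and admit no nontrivial relation.

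The main obstacle is the input to Stage one: the irreducibility---and the no-intermediate-covers refinement---of the dynatomic curves of $z\mapsto z^{d}+t$ over $\ov{\Q}$. This is the deep content of the cited work, proved through the combinatorial geometry of the Multibrot parameter space (internal addresses, Hubbard trees, Yoccoz puzzles) or, in Morton's treatment, via reduction modulo $p$ together with $p$-adic estimates. Granting it, the rest is the discriminant bookkeeping indicated above together with the elementary group theory of wreath products, carried out in the order just described.
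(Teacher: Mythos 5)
The paper does not prove this theorem: it is recalled verbatim from the literature (\cite[Theorem 10]{morton}, with $\ov{\Q}$ replaced by $k$), and the proofs live in Bousch, Lau--Schleicher and Morton. There is therefore no in-paper proof to compare yours against; what follows assesses your sketch on its own terms.

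Your reduction to $\C$, the upper bound $G\hookrightarrow W=\prod_{e\mid b}(\Z/e\Z\ \mr{wr}\ S_{r_e})$ via the dynatomic factorisation and the centraliser of $f$, and the reduction of surjectivity to the monodromy of the dynatomic covers are all sound and track the shape of the cited arguments. The genuine gap is in Stage one. You want to conclude via Jordan's theorem from transitivity plus a transposition, and to do so you invoke a primitivity input --- ``absence of intermediate covers of the $\Orb_e$-curve'' --- which you describe as a ``refinement'' of the Bousch/Lau--Schleicher irreducibility. That is not something you can cite: primitivity of the action on period-$e$ orbits is not an independent lemma in those references, and asserting it is essentially assuming the crux of what you are trying to prove. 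The actual proofs do not route through Jordan's theorem in this way: Morton assembles enough explicit local monodromy (transpositions at generic discriminant zeros, cycles at parabolic parameters) to generate the wreath product directly, and Bousch/Lau--Schleicher argue through the combinatorics and topology of the Multibrot parameter space. Until the primitivity step is sourced or replaced, Stage one does not close.

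Two smaller points. Your Stage three is mis-framed: Goursat's lemma requires you to rule out \emph{all} common quotients of the factors, not just abelian ones, and $\Z/e\Z\ \mr{wr}\ S_{r_e}$ has nonabelian simple quotients $A_{r_e}$ when $r_e\geq 5$. The disjoint-branch-locus observation you make at the end is in fact the right tool and, correctly deployed, kills all correlations (a loop around a branch point of one factor acts trivially on the others), so the conclusion is salvageable --- but for a different reason than the one you state. Moreover the disjointness itself is not free: a parabolic parameter where a period-$e$ cycle degenerates onto a period-$e'$ cycle ($e'\mid e$) lies in the ramification locus of the period-$e$ cover, and one must actually verify it does not simultaneously ramify the period-$e'$ cover rather than assert it. These are the sorts of discriminant computations Morton carries out explicitly.
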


The theorem can be interpreted as saying that the Galois action is as
large as possible given that it must commute with the action of
$f$. One may expect that a similar result holds for generic
endomorphisms of projective spaces of arbitrary dimension; however,
the following proposition, for the proof of which we will use only the
transitivity of the Galois action in the above theorem, suffices for
our applications.

\begin{prop} \label{prop:galois} %\marginpar{thm:galois}
  Let $k$ be a field of characteristic zero and let $k_{n,d}$ be the
  function field of $\Mor_{n,d/k}$. Let $f_{n,d}$ be the endomorphism
  of $\P^n_{k_{n,d}}$ corresponding to the generic point of
  $\Mor_{n,d/k}$ and let $b$ be any positive integer. Then
  $\Gal(\ov{k_{n,d}}/k_{n,d})$ acts transitively on
  $\Per_{f_{n,d}}(b)$.
\end{prop}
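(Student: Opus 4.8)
The plan is to deduce transitivity of the monodromy on $\Per_{f_{n,d}}(b)$ from the one-variable case (Theorem \ref{thm:morton}) by exhibiting a suitable specialisation of the universal family of degree-$d$ endomorphisms of $\P^n$ to a family for which the periodic-point count and monodromy are controlled. Concretely, I would first reduce to showing that a certain scheme parametrising periodic points of period $b$ is irreducible: by the discussion in \S\ref{subsec:mono}, with $g:Z \to S$ the restriction to $S = \Mor_{n,d/k}$ of the scheme $P_f(b)$ minus the locus of lower-period points, transitivity of the Galois action on the geometric generic fibre is equivalent to $Z$ being irreducible, equivalently (since $\ch(k)=0$) its smooth locus being connected. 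Here $P_f(b)$ is finite flat over $S$ away from the lower-period locus by the argument of Lemma \ref{lem:lift}(1) applied to the universal endomorphism, which is finite and quasi-polarised over $\Mor_{n,d}$ (take $\mc{L} = \mc{O}(1)$, so $f^*\mc{L}\otimes\mc{L}^{-1} = \mc{O}(d-1)$, ample since $d>1$).

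Next I would produce the specialisation. Inside $\Mor_{n,d/k}$, consider the family of endomorphisms of the shape $[x_0:\dots:x_n] \mapsto [x_0^d + t\,x_n^d : x_1^d : \dots : x_{n-1}^d : x_n^d]$, or some similar ``triangular'' family, over the line $\Spec k[t]$. On the invariant line $\{x_1 = \dots = x_{n-1} = 0\}$ this restricts to the one-variable map $z \mapsto z^d + t$ (in the affine chart $x_n = 1$), so Theorem \ref{thm:morton} gives transitivity — indeed the full wreath-product Galois group — on the periodic points of each period $e$ of this one-variable map over $k(t)$. The key claim is then that, for $t$ generic, \emph{all} periodic points of period $b$ of the $\P^n$-map lie on this invariant line: because of the uncoupled ``diagonal'' form, a periodic point must have each coordinate $x_i$ ($1 \le i \le n-1$) either zero or a root of unity times a fixed value, and a dimension/degree count (the expected number $d^{b \cdot n} + \dots$ of period-dividing-$b$ points, versus the contributions from the various coordinate strata) should force, after discarding lower-period points, that the genuine period-$b$ points all sit on the line — or at worst on a union of lines each conjugate under the obvious symmetries, which one handles by an additional symmetry argument. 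Combining this with the fact that $\Per_{f_{n,d}}(b)$ is finite flat over a neighbourhood of the generic point of $\Mor_{n,d/k}$, and that irreducibility of a finite flat cover can only fail on a proper closed subset, I would conclude that transitivity at the special point $t$ (generic in $\A^1$) forces transitivity at the generic point of $\Mor_{n,d/k}$: a decomposition of $Z$ over $k_{n,d}$ would specialise to a decomposition over the special family, contradicting the one-variable result.

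The main obstacle I anticipate is precisely the counting/degeneration step: verifying that for the chosen auxiliary family the period-$b$ points (as opposed to points of period properly dividing $b$) are exactly the ones produced by the one-variable specialisation, with no ``extra'' period-$b$ points coming from the other coordinates or from the interaction between strata. This requires either a clean closed-form count of $|\Per(b)|$ for the diagonal family — via an inclusion-exclusion over which subsets of coordinates vanish, using that $z\mapsto z^d$ and $z \mapsto z^d+t$ have known numbers of period-$e$ points — matched against the known count $N_{n,d}(b)$ for a general endomorphism of $\P^n$, or a more robust argument showing that the extra fixed loci are of lower period and hence irrelevant. A secondary technical point is to ensure the chosen family actually meets the generic point of $\Mor_{n,d/k}$ in the sense needed, i.e. that the relevant open locus over which $P_f(b)$ is finite flat is hit; this follows since that locus is open and dense and the family is not contained in its complement. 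Everything else — the translation between monodromy transitivity and irreducibility, the finite-flatness, and the specialisation principle for connectedness of finite covers — is standard and covered by \S\ref{subsec:mono} and Lemma \ref{lem:lift}.
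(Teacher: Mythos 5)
Your structural framing (transitivity of the Galois action is the transitivity of the monodromy of the finite flat period-$b$ scheme over $\Mor_{n,d/k}$; the monodromy of any subfamily through a point with multiplicity-one periodic points is a subgroup of the full monodromy, so it suffices to exhibit a subfamily with transitive monodromy) is sound and is indeed how the paper sets things up. But the heart of your argument fails: the ``key claim'' that for generic $t$ all period-$b$ points of the family $[x_0:\dots:x_n]\mapsto[x_0^d+t\,x_n^d:x_1^d:\dots:x_n^d]$ lie on the invariant line $\{x_1=\dots=x_{n-1}=0\}$ is false, and no degree count or symmetry argument can rescue it. On the chart $x_n=1$ the map is $(z_0,\dots,z_{n-1})\mapsto(z_0^d+t,z_1^d,\dots,z_{n-1}^d)$, so for example $(z_0,1,\dots,1)$ with $z_0$ of exact period $b$ for $z\mapsto z^d+t$ is a genuine period-$b$ point off the line; more globally, a degree-$d$ endomorphism of $\P^n$ has on the order of $d^{nb}$ points of period dividing $b$, while the line carries only about $d^b$ of them, so almost all period-$b$ points live off the line (and they are not of lower period). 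Worse, since only the $x_0$-coordinate of your family depends on $t$ and the hyperplane $x_n=0$ is fixed pointwise independently of $t$, the monodromy of this one-parameter family preserves the coordinates $z_1,\dots,z_{n-1}$ of every periodic point and fixes all periodic points at infinity; it is therefore very far from transitive, and no ``additional symmetry argument'' is available, because the needed symmetries (permuting coordinates, rescaling by roots of unity so as to change charts) are simply not realized by the monodromy of the family you chose. A single one-parameter deformation of the power map cannot generate enough monodromy; you must enlarge the family.

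This is exactly what the paper's proof supplies. It takes the power map $g_{n,d}$ as basepoint, checks that all its periodic points are transverse (the eigenvalues of $d(g_{n,d}^b)$ at a periodic point are $d^b\xi$ with $\xi$ a root of unity or $0$, never $1$), so the local monodromy computed there is a subgroup of the full monodromy. Then, for each $0\le i\le n$, it uses the $n$-parameter family $f_i$ which on the chart $x_i\neq 0$ is the product of $n$ independent unicritical polynomials $\phi_{c_j}(z)=z^d+c_j$; Theorem \ref{thm:morton} describes the resulting subgroups $G_i$ coordinate-wise. The real work, which your proposal has no substitute for, is the combinatorial argument (using Lemma \ref{lem:roots} on orders of roots of unity, and passing between the charts $U_i$ by rescaling coordinates) showing that the group generated by all the $G_i$ already acts transitively on $\Per_{g_{n,d}}(b)$, i.e.\ one can move $[\zeta,1,\dots,1]$ with $\zeta\in\Per_{\phi_0}(b)$ to an arbitrary point of $\Per_{g_{n,d}}(b)$ one coordinate at a time while keeping the zeroth coordinate of full period $b$ as a ``reservoir''. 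So your reduction steps are fine, but the missing ingredient is the construction of enough monodromy from several multi-parameter families and the root-of-unity bookkeeping that glues them together.
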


\subsection{}

For a field $k$ and any element $\l \in k$, let $\phi_{\l}: \A^1_k \to
\A^1_k$ be the map given by $z \mapsto z^d + \l$; the integer $d$ will
be assumed to be fixed whenever we use this notation.  The periodic
points of $\phi_0$ are $0$ and the roots of unity of order prime to
$d$: if $\zeta$ is a primitive $n$-th root of unity with $(n,d) = 1$
then the period of $\zeta$ is the order of $d$ in $(\Z/n\Z)^{\times}$.

We shall need the following simple lemma for the proof of Proposition
\ref{prop:galois}.
\begin{lem} \label{lem:roots} %\marginpar{lem:roots}
  Let $d >1$ and $m,m' \geq 1$ be integers such that $(m,d) = (m',d)
  =1$. Assume that the highest powers of $2$ dividing $m$ and $m'$ are
  unequal or are both equal to $1$. Let $a$ (resp.~$a'$) be the order
  of $d$ in $(\Z/m\Z)^{\times}$ (resp.~$(\Z/m'\Z)^{\times}$).
\begin{enumerate}
\item The order of $d$ in $(\Z/\lcm(m,m')\Z)^{\times}$ is divisible by
  $\lcm(a,a')$.
\item There exists a root of unity $\zeta$ (resp.~$\zeta'$) of order
  $m$ (resp.~$m'$) so that $\zeta{\zeta'}^{-1}$ is of order
  $\lcm(m,m')$.
\item For $\zeta$, $\zeta'$ as above, there exists a primitive
  $\lcm(m,m')$-th root of unity $\eta$ so that $\eta \zeta$ is a
  primitive $\lcm(m,m')$-th root of unity.
\end{enumerate}
\end{lem}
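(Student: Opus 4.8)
The three assertions are elementary facts about roots of unity and multiplicative orders, and the plan is to reduce all of them to a prime-by-prime analysis via the Chinese Remainder Theorem; the hypothesis on the $2$-adic valuations of $m$ and $m'$ enters only to control the prime $2$. For (1), put $N = \lcm(m,m')$: reduction modulo $m$ is a surjective ring homomorphism $\Z/N\Z \to \Z/m\Z$ carrying the unit $d$ to an element of order $a$, so $a$ divides the order of $d$ in $(\Z/N\Z)^{\times}$, and likewise $a'$ divides it; hence so does $\lcm(a,a')$. This step uses nothing about $m,m'$ or about the prime $2$.

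For (2) I would fix once and for all a primitive $N$-th root of unity $\omega$, so that the roots of unity of order dividing $N$ are exactly the powers $\omega^j$, with $\omega^j$ of order $N/\gcd(j,N)$. Writing $\zeta = \omega^{(N/m)u}$ with $\gcd(u,m)=1$ and $\zeta' = \omega^{(N/m')u'}$ with $\gcd(u',m')=1$, one has $\zeta{\zeta'}^{-1} = \omega^{w}$ with $w = (N/m)u - (N/m')u'$, and the goal is $\gcd(w,N)=1$, which I check at each prime $p\mid N$ separately. If $v_p(m)\ne v_p(m')$, then exactly one of $N/m$, $N/m'$ is prime to $p$ while the other is divisible by $p$; moreover the factor ($u$ or $u'$) multiplying the prime-to-$p$ one is itself prime to $p$ (since $p$ divides the corresponding one of $m$, $m'$), so $p\nmid w$ no matter how $u,u'$ are chosen. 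If $v_p(m)=v_p(m')\ge 1$, the hypothesis forces $p$ to be odd; then, having fixed $u'$, the condition $p\nmid w$ excludes a single residue class for $u$ modulo $p$, and since $|(\Z/p\Z)^{\times}|\ge 2$ we may choose $u\bmod p$ avoiding it and lift it to a unit modulo $p^{v_p(m)}$. Assembling the local choices by the Chinese Remainder Theorem yields $u$, hence $\zeta$ and $\zeta'$.

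For (3), with $\zeta=\omega^{(N/m)u}$ as just produced and $s := (N/m)u$, I look for $\eta=\omega^{v}$ with $\gcd(v,N)=1$ and $\gcd(v+s,N)=1$. At a prime $p$ with $p\nmid m$, or with $v_p(m)<v_p(N)$, the exponent $s$ is divisible by $p$ while $v$ is not, so $p\nmid v+s$ automatically. At a prime $p$ with $v_p(m)=v_p(N)\ge 1$ one has $p\nmid s$, and the hypothesis again forces $p$ to be odd — here one should invoke the lemma with $m$ labelled so that its $2$-adic valuation is the not-larger one, which costs nothing since the two roles are interchangeable once the valuations are known to differ — and then $v\bmod p$ need only avoid the single forbidden class $-s\bmod p$ (possible since $p\ge 3$), after which CRT finishes the construction.

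The only real obstacle is the prime $2$. At an odd prime there is always slack because $(\Z/p\Z)^{\times}$ has at least two elements, but modulo $2$ there is a unique unit, so if $v_2(m)=v_2(m')\ge 1$ then $\zeta_2{\zeta_2'}^{-1}$ in (2) (resp.\ $\eta_2\zeta_2$ in (3)) is forced to be a $2$-power root of order strictly less than $2^{v_2(N)}$, and the conclusions genuinely fail. The hypothesis on the $2$-adic valuations of $m$ and $m'$ is precisely what rules this out, so the substance of the proof is exactly the bookkeeping confirming that it suffices; everything else is routine use of the Chinese Remainder Theorem.
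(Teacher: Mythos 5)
Your argument is correct and is essentially the paper's own: part (1) via the reduction homomorphisms $(\Z/\lcm(m,m')\Z)^{\times}\to(\Z/m\Z)^{\times}$, and parts (2)--(3) by the same prime-by-prime (CRT) reduction with a counting/avoidance step at each prime, the $2$-adic hypothesis entering only at $p=2$ (the paper packages the local step as: any translate of the set of generators of $\Z/p^r\Z$ meets that set when $p$ is odd, while at $p=2$ only a translate by a non-generator does). Your caveat on (3) --- that one must take $\zeta$ to be the root whose order has the not-larger $2$-adic valuation, since otherwise the $2$-part of $\eta\zeta$ is too small --- is a genuine and correct observation: the paper's $p=2$ sentence (``the translate of $P$ by an element not in $P$'') tacitly assumes exactly this labelling, so your reading recovers the intended statement.
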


\begin{proof}
  The natural quotient map from $\Z/\lcm(m,m')\Z$ to $\Z/m\Z$
  (resp.~$\Z/m'\Z$) induces a group homomorphism from
  $(\Z/\lcm(m,m')\Z)^{\times}$ to $(\Z/m\Z)^{\times}$
  (resp.~$(\Z/m'\Z)^{\times}$). This implies that the order of $d$ in
  $(\Z/\lcm(m,m')\Z)^{\times}$ is divisible by $\lcm(a,a')$.

  To prove (2) and (3), we may reduce to the case that $m$ and $m'$
  are powers of the same prime $p$. Let $P \subset \Z/p^r\Z$ be the
  set of generators, so $|P| = p^r - p^{r-1}$. If $p > 2$, then $|P|>
  p^{r-1}$ so the translate of $P$ by any element of $\Z/p^r\Z$ has a
  non-empty intersection with $P$. If $p=2$, the claim follows from
  the extra condition since the translate of $P$ by an element not in
  $P$ always intersects $P$ non-trivially.
\end{proof}

\begin{proof}[Proof of Proposition \ref{prop:galois}]
  If $n=1$ and $b>1$ the proposition follows immediately from Theorem
  \ref{thm:morton}. If $b=1$ a much simpler version of the argument
  below shows transitivity; since we do not use this later we leave
  the details to the reader.

  We now assume $n>1$.  Consider the morphism $g_{n,d}:\P^n_k \to
  \P^n_k$ given by $[x_0,\dots,x_n] \mapsto [x^d_0,\dots,x^d_n]$. The
  set $\Per_{g_{n,d}}(b)$ consists of points which have a
  representative $[\xi_0,\xi_1,\dots,\xi_n]$ with each $\xi_i$ equal
  to $0$ or a $d^b-1$-th root of unity. The standard affine charts of
  $\P^n_k$ given by the locus where a fixed coordinate is non-zero are
  preserved by $g_{n,d}$. A simple computation on each such chart
  shows that the eigenvalues of the differential of $g_{n,d}^b$ at a
  fixed point are equal to $d^b \xi$, where $\xi$ is a root of unity
  or $0$. This is never equal to $1$ since $d>1$, so
  $\Gamma_{g_{n,d}^b}$ and the diagonal intersect transversely in
  $\P^n_k \times \P^n_k$ for all $b>0$. Consequently, all periodic
  points of $g_{n,d}$ have multiplicity one so we may use $g_{n,d}$ as
  a basepoint in $\Mor_{n,d}$ in order to compute the Galois action on
  $\Per_{f_{n,d}}$.

\smallskip

  For $0\leq i \leq n$, consider the family of endomorphisms $f_i:
  \P^n_k \times \A^n_k \to \P^n_k \times \A^n_k$ given by
\begin{multline*}
f_i(([x_0,\dots,x_i,\dots,x_n], (c_1,c_2,\dots,c_n))) = \\
([x_0^d +
c_1x_i^d,\dots, x_{i-1}^d + c_ix_i^d,x_i^d, x_{i+1}^d +
c_{i+1}x_i^d,\dots,x_n^d + c_nx_i^d],(c_1,c_2,\dots,c_n)) \ .
\end{multline*}
On the open affine $U_i$ given by $x_i \neq 0$, $f_i$ is the product
of the $n$ polynomials $\phi_{c_j}$. On the complement of this affine,
\emph{i.e.}, on the subvariety given by $x_i = 0$ (which is also
preserved by $f_i$), the maps do not depend on $c_j$, so the monodromy
action of this family on the periodic points in this locus is trivial.
Let $G_i$ be the subgroup of the monodromy group acting on
$\Per_{g_{n,d}}$ corresponding to this family; by applying Theorem
\ref{thm:morton} one gets a complete description of this group. We let
$G$ be the subgroup of the monodromy group generated by all the $G_i$.

Let $P = [\zeta,1,\dots,1,1]$ where $\zeta$ is in $\Per_{\phi_0}(b)$
and let $Q = [\xi_0,\xi_1,\dots,\xi_n]$ be any other element of
$\Per_{g_{n,d}}(b)$. We may assume that some $\xi_i = 1$, so each
$\xi_j \in \Per_{\phi_0}(b')$ for some $ b' \mid b$, and also the lcm
of the periods of all the $\xi_j$ is $b$.  We prove the transitivity
of the monodromy action by showing that there exists an element in the
monodromy which sends $P$ to $Q$.

From the transitivity of the Galois action in Theorem \ref{thm:morton}
it follows that for $\xi_j \in \Per_{\phi_0}(b')$, as long as some
$\xi_i = 1$ with $i \neq j$, we can find an element of $G$ which fixes
all coordinates of $Q$ except that it replaces $\xi_j$ with any other
$\xi_j' \in \Per_{\phi_0}(b')$. Since $0, 1 \in \Per_{\phi_0}(0)$, we
may use this to assume that all $\xi_i \neq 0$ and then also, by
dividing through by $\xi_n$, that $\xi_n = 1$.

We now show that we may also assume that $\xi_0 \in \Per_{\phi_0}(b)$.
Suppose $\xi_0$ is a primitive $m$-th root of unity, $\xi_1$ is a
primitive $m'$-th root of unity and $\xi_0 \in \Per_{\phi_0}(a)$,
$\xi_1 \in \Per_{\phi_0}(a')$. By using the action of $G$ we may
change $\xi_0$ and $\xi_1$ so that that $m = d^a -1$, $m' = d^{a'} -
1$. If the highest powers of $2$ dividing $m$ and $m'$ are equal and
greater than $1$ we may change $\xi_0$ to a primitive $(d^a -1)/2$-th
root of unity; the period $a$ remains unchanged.  By Lemma
\ref{lem:roots} we may then assume that $\xi_0{\xi_1}^{-1}$ is a
primitive $\lcm(m,m')$-th root of unity. We multiply all coordinates
of $Q$ by $\xi_1^{-1}$ so the zeroth coordinate becomes
$\xi_0{\xi_1}^{-1}$ and the second becomes $1$.  Using the action of
$G$, we then replace the zeroth coordinate by $\eta$ as in Lemma
\ref{lem:roots} while keeping all other coordinates fixed. We then
multiply all coordinates by $\xi_1$. The resulting point has all
coordinates except for the zeroth equal to the corresponding
coordinates of $Q$ while the zeroth coordinate is now in
$\Per_{\phi_0}(\lcm(a,a'))$. Repeating this procedure with $\xi_1$
replaced by $\xi_2$, then $\xi_3$ and so on, since the lcm of the
periods of all the $\xi_i$ is $b$, it follows that eventually we have
that $\xi_0 \in \Per_{\phi_0}(b)$.

We now inductively transform $P$ into $Q$ using the action of $G$. If
$\xi_i$ has period $a_i$ as an element of $\Per_{\phi_0}$, we may use
the action of $G$ to replace it by a primitive $d^{a_i} -1$-th root of
unity. If the highest power of $2$ dividing $d^{a_i} -1$ is equal to
the highest power of $2$ dividing $d^b -1$ and $d$ is odd, then we use
a primitive $d^{a_i} -1/2$-th root of unity instead if $i>0$.  

Let $P_0 = P$ and suppose we have inductively constructed $P_i=
[\xi_{0,i},\xi_{1,i},\dots,\xi_{n-1,i},1] \in \Per_{\phi_0}(b)$ with
the following properties:
\begin{enumerate}
\item $\xi_{0,i} \in \Per_{\phi_0}(b)$
\item $\xi_{j,i} = \xi_j$ for $0<j\leq i$
\item $\xi_{j,1} = 1$ for $j>i$.
\end{enumerate}
$P_0$ clearly satisfies these properties; we will show that given
$P_i$ with $i < n$ we can find an element of $G$ which transforms it
into a point $P_{i+1}$ with the required properties.

So suppose $P_i$ has been constructed. By Lemma \ref{lem:roots} there
exists $\eta \in \Per_{\phi_0}(b)$ so that $\eta^{-1} \xi_{i+1} \in
\Per_{\phi_0}(b)$. Since $\xi_{0,i} \in \Per_{\phi_0}(b)$ we may use
the action of $G$ to replace $\xi_{0,i}$ with $\eta$ while keeping the
other coordinates fixed. We then multiply all coordinates by
$\eta^{-1}$ so the zeroth coordinate becomes $1$ and the $i+1$-th
coordinate becomes $\eta^{-1}$. Since $\eta^{-1} \xi_{i+1} \in
\Per_{\phi_0}(b)$ we may use the action of $G$ to replace the $i+1$-th
coordinate by $\eta^{-1} \xi_{i+1} $ while keeping all the other
coordinates fixed. If we now multiply all coordinates by $\eta$ we
obtain a point $P_{i+1}$ with the property that all coordinates except
the zeroth and $i+1$-th of $P_i$ and $P_{i+1}$ are equal, the zeroth
coordinate is $\eta \in \Per_{\phi_0}(b)$ and the $i+1$-th is
$\xi_{i+1}$, so fulfilling the requirements.

We thus obtain a point $P_{n}$ with the property that all coordinates
of $P_{n}$ and $Q$ are equal, except possibly for the first. Since the
zeroth coordinate of both is in $\Per_{\phi_0}(b)$ and the $n$'th is
$1$, we may use an element of $G$ to transform $P_n$ into $Q$. It
follows that the action of $G$, hence of the full monodromy group, is
transitive on $\Per_{\phi_0}(b)$.
\end{proof}

\begin{cor} \label{cor:ram}
  No preperiodic points of $f_{n,d}$ lies in the ramification
  locus.
\end{cor}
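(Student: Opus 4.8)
The plan is to combine Proposition \ref{prop:galois} with the observation that the ramification locus behaves forward-invariantly along orbits. Write $R \subset \P^n_{k_{n,d}}$ for the ramification locus of $f_{n,d}$. Since $R$ is cut out by the Jacobian determinant of a set of forms defining $f_{n,d}$, it is a closed subvariety defined over $k_{n,d}$; and as $f_{n,d}$ is a finite morphism over $k_{n,d}$, each forward image $f^j_{n,d}(R)$ (for $j \geq 0$) is again a closed subvariety defined over $k_{n,d}$, hence stable under $\Gal(\ov{k_{n,d}}/k_{n,d})$. The first step is a reduction to periodic points: if $x$ is a preperiodic point of $f_{n,d}$ of preperiod $a$ with $x \in R$, then applying $f_{n,d}$ repeatedly gives $f^j_{n,d}(x) \in f^j_{n,d}(R)$ for all $j$, and in particular $f^a_{n,d}(x) \in f^a_{n,d}(R)$ while $f^a_{n,d}(x)$ is periodic. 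So it suffices to show that for every $j \geq 0$ no periodic point of $f_{n,d}$ lies in $f^j_{n,d}(R)$ -- the case $j = 0$ handling the periodic points themselves.

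For the second step, fix $j \geq 0$ and $b \geq 1$. By Proposition \ref{prop:galois} the Galois group acts transitively on $\Per_{f_{n,d}}(b)$, and since $f^j_{n,d}(R)$ is Galois-stable this set is either contained in $f^j_{n,d}(R)$ or disjoint from it; thus it is enough to exhibit a single point of period $b$ of $f_{n,d}$ not lying in $f^j_{n,d}(R)$. For this I reuse the basepoint $g_{n,d}$ of the proof of Proposition \ref{prop:galois}. The ramification locus of $g_{n,d}$ is the union $\bigcup_{i=0}^{n}\{x_i = 0\}$ of the coordinate hyperplanes, which is totally invariant under $g_{n,d}$, so $g^j_{n,d}(R(g_{n,d})) \subseteq \bigcup_i \{x_i = 0\}$. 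The point $[\zeta : 1 : \cdots : 1]$ with $\zeta$ a primitive $(d^b - 1)$-th root of unity is a point of period $b$ of $g_{n,d}$ of multiplicity one (all periodic points of $g_{n,d}$ are, as shown in the proof of Proposition \ref{prop:galois}) and has no zero coordinate. Since the universal ramification locus commutes with base change and the universal endomorphism over $\Mor_{n,d}$ is finite, the universal version of $f^j_{n,d}(R)$ is a closed subset of $\P^n \times \Mor_{n,d}$ whose fibre over $g_{n,d}$ equals $g^j_{n,d}(R(g_{n,d})) \subseteq \bigcup_i\{x_i=0\}$; hence $[\zeta : 1 : \cdots : 1]$ deforms to a point of period $b$ of $f_{n,d}$ specialising back to it and lying outside $f^j_{n,d}(R)$. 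Therefore $\Per_{f_{n,d}}(b)$ is not contained in $f^j_{n,d}(R)$, so by transitivity it is disjoint from it. Letting $b$ vary and combining with the first step proves the corollary.

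I expect the only delicate point to be the family-theoretic bookkeeping in the second step: checking that the universal ramification locus commutes with base change (clear from its Jacobian description), that its forward image under the universal endomorphism is closed with the expected fibre over $g_{n,d}$ (because that endomorphism is finite over $\Mor_{n,d}$ and restricts to $g_{n,d}$ on the fibre), and that the deformation of $[\zeta : 1 : \cdots : 1]$ keeps exact period $b$ (from multiplicity one together with the fact that periods do not drop under specialisation). The conceptual content is merely that an orbit starting in $R$ stays in the nested family $R, f_{n,d}(R), f^2_{n,d}(R), \dots$, that for the power map this family lies inside the totally invariant coordinate-hyperplane configuration -- off which periodic points of every period are visible -- and that Proposition \ref{prop:galois} then propagates this to the generic endomorphism.
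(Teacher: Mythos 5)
Your proposal is correct and follows essentially the same route as the paper: reduce to showing periodic points avoid the forward images $f_{n,d}^j(R)$, invoke the Galois transitivity on $\Per_{f_{n,d}}(b)$ from Proposition \ref{prop:galois} together with the $k_{n,d}$-rationality of $f_{n,d}^j(R)$, and specialise to $g_{n,d}$, whose ramification locus is the invariant union of coordinate hyperplanes and therefore misses the period-$b$ point $[\zeta:1:\cdots:1]$. You have merely made the family-theoretic specialisation bookkeeping explicit where the paper leaves it terse.
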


\begin{proof}
  The ramification locus of $f_{n,d}$ is defined over $k_{n,d}$, thus
  if one preperiodic point lies in the ramification locus, so must its
  entire Galois orbit.

  The Galois orbit must map onto the Galois orbit of the corresponding
  periodic point, \emph{i.e.}~the periodic point $y$ such that
  $f_{n,d}^r(x) = y$ and $f_{n,d}^s(x)$ is not periodic for any $s<r$.
  But this orbit consisits of all periodic points of a fixed period
  $b$ by Proposition \ref{prop:galois}. By specialisation to the
  $d$-power map $g_{n,d}$, we see that this is not possible: the
  ramification locus of this map is actually preserved by the map and
  for no period $b$ are all the periodic points of period $b$
  contained in the ramification locus.
\end{proof}

\subsection{}

\begin{thm} \label{thm:inv} %\marginpar{thm:inv}
  Let $f:\P^n_K \to \P^n_K$ be a generic endomorphism of degree $d >
  1$ over an algebraically closed field $K$ of characteristic
  zero. Then
\begin{enumerate}
\item If $X \subset \P^n_K$ is an irreducible subvariety such that
  $f^r(X) = X$ for some $r > 0$, then $X$ is a point or $X = \P^n_K$.
\item If $O_f(x)$ is infinite for $x \in \P^n(K)$ then $O_f(x)$
  is Zariski dense in $\P^n_K$.
\end{enumerate}
\end{thm}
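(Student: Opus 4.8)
The plan is to deduce both parts from the transitivity results already established, namely Proposition \ref{prop:galois} and Corollary \ref{cor:ram}, together with a specialisation argument. The key point is that a generic $f$ (after conjugation by an element of $PGL_{n+1}$) corresponds to the generic point of a fibre of $\Mor_{n,d} \to \Spec(\Z)$, so properties of $f$ can be tested by studying the family over $\Mor_{n,d/k}$ and specialising, and conversely properties that fail for $f$ would already fail at the generic point of that family. For part (1), suppose $X$ is irreducible with $f^r(X) = X$ and $X$ is neither a point nor all of $\P^n_K$. Replacing $f$ by $f^r$ (which is again generic, since the $r$-th power of the generic endomorphism of degree $d$ specialises appropriately — this should be checked, but follows because $P_{f^r}$ is controlled by Lemma \ref{lem:lift}) we may assume $r = 1$, so $f(X) = X$. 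Since $f$ is finite and $X$ is $f$-invariant, $f|_X : X \to X$ is a finite surjective endomorphism; by a standard argument (e.g. counting, or using that the restriction of an ample-difference bundle is still ample-difference) $f|_X$ is quasi-polarised, and hence $X$ contains infinitely many $f$-periodic points — concretely, $P_f(m) \cap X$ is non-empty and of growing cardinality as $m \to \infty$. So $X$ contains at least one $f$-periodic point $y$ of some period $b$.

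The main obstacle, and the heart of the argument, is now to derive a contradiction from the existence of a proper positive-dimensional invariant $X$ containing a periodic point. Here is where Proposition \ref{prop:galois} enters. Work over the function field: let $X \subset \P^n_{k_{n,d}}$ be defined over some finite extension $L$ of $k_{n,d}$, and consider the Galois orbit of the periodic point $y \in X(\ov{k_{n,d}})$ of period $b$. By Proposition \ref{prop:galois}, this Galois orbit is \emph{all} of $\Per_{f_{n,d}}(b)$. The subvariety $X$ has only finitely many Galois conjugates $X = X_1, \dots, X_t$ (the conjugates over $k_{n,d}$), and their union is defined over $k_{n,d}$ and is $f_{n,d}$-invariant; moreover every periodic point of period $b$ lies on some $X_j$. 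Thus $\Per_{f_{n,d}}(b) \subset \bigcup_j X_j$, a proper closed subvariety. To see this is impossible, specialise to the $d$-power map $g_{n,d}$: by Lemma \ref{lem:lift}(2) every point of $\Per_{g_{n,d}}(b)$ lifts to a point of $\Per_{f_{n,d}}(b)$, and since the periodic points of $g_{n,d}$ have multiplicity one (shown in the proof of Proposition \ref{prop:galois}), the specialisation map on periodic points is a bijection $\Per_{f_{n,d}}(b) \to \Per_{g_{n,d}}(b)$ compatible with the flat family $P_f(b)$. Since $\bigcup_j X_j$ is flat of relative dimension $< n$ over a neighbourhood of the $g_{n,d}$-point (or at least proper of fibre dimension $< n$), the specialisation of $\bigcup_j X_j$ is a proper closed subvariety of $\P^n_k$ containing all of $\Per_{g_{n,d}}(b)$. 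But $\Per_{g_{n,d}}(b)$ is Zariski dense in $\P^n_k$: it contains all points $[\xi_0, \dots, \xi_n]$ with each $\xi_i$ a $(d^b-1)$-th root of unity or $0$, and for $b$ large enough these points are not contained in any fixed proper subvariety — e.g. one may choose $b$ with $d^b - 1 > D$ for $D$ the degree of the subvariety and apply a Schwartz--Zippel / Combinatorial Nullstellensatz count on a standard affine chart. This contradiction proves (1). (One subtlety to address: $b$ is fixed by $X$, so the density argument must be run with $f^r$ and the actual period of $y$; but since periods of $g_{n,d}$ take unboundedly many values and periodic points of all periods are dense, and one is free to increase $b$ by passing to a multiple period inside $X$ if necessary — $X$ being invariant also under $f^{kr}$ — this causes no difficulty.)

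For part (2), suppose $O_f(x)$ is infinite but its Zariski closure $Y$ is a proper subvariety. Since $f(O_f(x)) \subset O_f(x)$, we have $f(Y) \subset Y$; as $f$ is finite, $f(Y)$ is closed of the same dimension, and because $Y$ has finitely many irreducible components permuted by $f$, some component $Z$ satisfies $f^r(Z) \subseteq Z$ for some $r > 0$, hence $f^r(Z) = Z$ by finiteness. By part (1), $Z$ is a point or all of $\P^n_K$; since $Y \neq \P^n_K$, $Z$ is a point, so every component of $Y$ is a point, i.e. $Y$ is finite — contradicting that $O_f(x)$ is infinite. Therefore $Y = \P^n_K$, which is (2). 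I expect part (1) to be where essentially all the work lies; part (2) is a formal consequence, and the only care needed there is the standard reduction from ``$f(Y) \subseteq Y$'' to ``$f^r(Z) = Z$ for an irreducible component $Z$'', using finiteness of $f$ to upgrade set-theoretic containment to equality of varieties of equal dimension.
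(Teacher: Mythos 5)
Your overall strategy is the same as the paper's: show that a positive--dimensional invariant $X$ must contain periodic points, use Proposition \ref{prop:galois} together with the fact that the union of the Galois conjugates of $X$ is defined over $k_{n,d}$ to force $\Per_{f_{n,d}}(b)\subset X$ for suitable $b$, specialise to the power map $g_{n,d}$ via Lemma \ref{lem:lift}, and finish with a grid--density count (the paper quotes \cite[Theorem 9.2]{tao-vu-ac} where you invoke Schwartz--Zippel); part (2) is deduced from part (1) exactly as in the paper. However, three points in your write-up need repair. First, the reduction ``replace $f$ by $f^r$, which is again generic'' is false: $f^r$ has degree $d^r$ but lies in a proper subvariety of $\Mor_{n,d^r}$, so it is not a generic endomorphism, and Lemma \ref{lem:lift} says nothing about this. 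Fortunately the step is unnecessary: any $f^r$-periodic point of $X$ is $f$-periodic, and Proposition \ref{prop:galois} is applied to $f=f_{n,d}$ itself (the paper applies its periodic-point theorem to $f^r$ but the Galois argument to $f$).

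Second, and more seriously, the assertion that quasi-polarisation of $f|_X$ gives infinitely many periodic points in $X$ ``by counting'' is not a routine argument: the intersection $\Gamma_{f^m}\cap\Delta$ may have positive-dimensional components or isolated points of large multiplicity, so the growth of intersection numbers does not by itself yield growing numbers of distinct points. This is exactly the non-formal input of the proof, and the paper handles it by citing \cite[Theorem 5.1]{nf-selfmaps} (density of periodic points for polarised endomorphisms), applied to $f^r|_X$; you should do the same rather than treat it as standard. Third, your description of $\Per_{g_{n,d}}(b)$ is wrong: points all of whose coordinates are $(d^b-1)$-th roots of unity or $0$ are periodic of period \emph{dividing} $b$, not equal to $b$, so they need not lie in the Galois orbit you control. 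The fix is the paper's: use the grid of points $[\xi_0,\dots,\xi_{n-1},1]$ with every $\xi_j\in\Per_{\phi_0}(b)$, which do have exact period $b$, and note that one gets an infinite sequence of admissible periods $b_i$ (not a single $b$ to be enlarged ``by passing to a multiple period'') because $X$ contains infinitely many periodic points while each $\Per_{f_{n,d}}(b)$ is finite by Lemma \ref{lem:lift}(1); then $|\Per_{\phi_0}(b_i)|\to\infty$ and the density count applies. With these corrections your argument coincides with the paper's proof.
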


\begin{proof}
  Since $f$ is generic, we may identify $f$ with $f_{n,d}$ defined
  above.  Any specialisation of $X$ to a subvariety of $\P^n$ defined
  over a finite extension of $k_{n,d}$ satisfies the same property as
  $X$, so we may assume that $X$ is defined over a finite extension of
  degree $m$ of $k_{n,d}$. Replacing $X$ by the union of its Galois
  conjugates, we may assume that $X$ is defined over $k_{n,d}$. Since
  $\dim(X) > 0$, it follows by \cite[Theorem 5.1]{nf-selfmaps} applied
  to $f^r$ that $X$ contains infinitely many periodic points of
  $f$. Since $X$ is defined over $k_{n,d}$, it then follows from Proposition
  \ref{prop:galois} that there is an infinite sequence of distinct
  integers $b_i$, $i = 1,2, \ldots$, such that $X$ contains all the
  periodic points of $f_{n,d}$ of period $b_i$ for all $i$. Let
  $g_{n,d}:\P^n_k \to \P^n_k$ be, as before, the map which raises each
  coordinate to its $d$-th power. By specialisation and Lemma
  \ref{lem:lift} (2), we obtain a subvariety $X'$ of $\P^n_k$ defined
  over $k$ which contains all the periodic points of $g_{n,d}$ of
  period $b_i$ for all $i$. 

  For $Z\subset \A^n_k$ any subvariety (with $k$ any field), if there
  is a sequence of finite subsets $S_i \subset \A^n(\ov{k})$ such that
  $(S_i)^n \subset Z(\ov{k})$ for all $i$ and $|S_i| \to \infty$ then
  $Z = \A^n_k$; this is well known and follows, for example, from
  \cite[Theorem 9.2]{tao-vu-ac}.  By the previous paragraph, for all
  $i$, $X'$ contains all elements of $\P^n(\ov{k})$ of the form
  $[\xi_0,\dots,\xi_{n-1},1]$ with $\xi_j \in
  \Per_{\phi_0}(b_i)$. Since $|\Per_{\phi_0}(b_i)| \to \infty$ with
  $i$ we must have $X' = \P^n_k$, hence $X = \P^n_{k_{n,d}}$, proving
  (1).

  To prove (2), we observe that the Zariski closure of $O_f(x)$ is
  mapped into itself by $f$. If it is infinite, it must contain a
  positive dimensional subvariety $X$ such that $f^r(X) = X$ for some
  $r>0$. By (1), we must have $X = \P^n_K$.
\end{proof}

\begin{rem}
  One can give a simpler proof of (1) using the Lefschetz trace formula
  rather than \cite[Theorem 5.1]{nf-selfmaps} if one only considers
  smooth $X$. Also, from Theorem \ref{thm:zhang} it follows that the
  elementary version of \cite[Theorem 5.1]{nf-selfmaps} asserting the
  density of preperiodic points suffices, but this introduces other
  dependencies.
\end{rem}

\section{The dynamical ``Manin--Mumford'' conjecture for generic
  endomorphisms}

\subsection{}

The original ``Manin--Mumford'' conjecture of Zhang asserted that for
any polarised endomorphism $f$ of a projective variety $X$ over a
field $K$ of characteristic zero, any subvariety $Y$ of $X$ containing
a Zariski dense set of preperiodic points is preperiodic. This was
known for abelian varieties and the multiplication by $m$ maps, but
was later shown to be false in general, even if $X = \P^n_K$, by
Ghioca and Tucker. Ghioca, Tucker and Zhang then proposed a modified
conjecture which takes into account the action of $f$ on the tangent
space; see the article \cite{ghioca-tucker-zhang} for a discussion of
the history, the statement of the modified version and some positive
results\footnote{It seems reasonable to expect the modified conjecture to
  hold even for quasi-polarised endomorphisms.}.

The following theorem implies that Zhang's conjecture, in its original
form, holds for generic endomorphisms of $\P^n_K$.

\begin{thm} \label{thm:zhang} %\marginpar{thm:zhang}
  For $f: \P^n_K \to \P^n_K$ a generic endomorphism of degree $d > 1$
  over an algebraically closed field $K$ of characteristic zero, any
  infinite subset of $\P^n(K)$ consisting of $f$-preperiodic points is
  Zariski dense in $\P^n_K$.
\end{thm}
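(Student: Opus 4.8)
The plan is to mimic the proof of Theorem \ref{thm:inv}(1), but working with preperiodic points of a fixed preperiod and period and using the stronger transitivity statement of Proposition \ref{prop:prep} in place of Proposition \ref{prop:galois}. Since $f$ is generic we may identify it with $f_{n,d}$. Let $Y \subset \P^n_K$ be an infinite set of preperiodic points whose Zariski closure $\ov Y$ is a proper subvariety; replacing $Y$ by an irreducible positive-dimensional component of $\ov Y$ and applying $f^r$ for a suitable $r$, we may assume $\ov Y$ is an irreducible subvariety of positive dimension which is preperiodic. Spreading out and taking the union of Galois conjugates as in the proof of Theorem \ref{thm:inv}, we may assume this subvariety $X$ is defined over $k_{n,d}$, and since $\dim X > 0$, \cite[Theorem 5.1]{nf-selfmaps} gives that $X$ contains infinitely many preperiodic points of $f_{n,d}$.

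Next I would use a pigeonhole argument together with Proposition \ref{prop:prep}: among the infinitely many preperiodic points in $X(\ov{k_{n,d}})$, infinitely many have the property that the pair (preperiod, period) takes one of infinitely many distinct values $(a_i,b_i)$, and since $X$ is defined over $k_{n,d}$ it must then contain the \emph{entire} Galois orbit of each such point. By Proposition \ref{prop:prep} this orbit is all of $\prep_{f_{n,d}}(a_i,b_i)$, so $X$ contains $\prep_{f_{n,d}}(a_i,b_i)$ for all $i$, with $|\prep_{f_{n,d}}(a_i,b_i)| \to \infty$. (One should also address the possibility that infinitely many preperiodic points share the same period $b$ but have unbounded preperiods, or vice versa — either way one extracts an infinite family of preperiod/period pairs realized in $X$, and a mild bookkeeping argument, e.g. noting that for fixed $(a,b)$ the set $\prep_{f_{n,d}}(a,b)$ is finite, shows infinitely many distinct pairs must occur.)

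Then, as in the proof of Theorem \ref{thm:inv}, I would specialise. The natural base point is again a coordinatewise power map, but now one needs a base point whose preperiodic structure of the relevant preperiods and periods is ``rich and split'' enough, and for which all the preperiodic points in question have multiplicity one so that Lemma \ref{lem:lift}(2) lifts the specialised subvariety $X'$ over $k$ containing $\prep(a_i,b_i)$ for the specialised map. Using $g_{n,d}([x_0,\dots,x_n]) = [x_0^d,\dots,x_n^d]$: on the chart $x_n \neq 0$ this is the $n$-fold product of $\phi_0 \colon z \mapsto z^d$, whose preperiodic points of preperiod $a$ and period $b$ form a set $\prep_{\phi_0}(a,b)$ that grows without bound as $(a,b)$ ranges over the pairs we have. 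As in the proof of Theorem \ref{thm:inv}, $X'$ then contains all points $[\xi_0,\dots,\xi_{n-1},1]$ with $\xi_j \in \prep_{\phi_0}(a_i,b_i)$, i.e. contains $S_i^n$ for finite sets $S_i \subset \A^1(\ov k)$ with $|S_i|\to\infty$; by \cite[Theorem 9.2]{tao-vu-ac} (the same tool used before) this forces $X' = \A^n_k$, hence $X' = \P^n_k$, hence $X = \P^n_{k_{n,d}}$, a contradiction. Therefore $\ov Y = \P^n_K$.

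The main obstacle is the transition to the specialisation $g_{n,d}$: one must check that $g_{n,d}$ can legitimately serve as a base point for computing monodromy on preperiodic points of the given preperiods and periods — that is, that the relevant preperiodic points of $g_{n,d}$ are reduced points of the appropriate $P_f(m,n)$-type schemes so that Lemma \ref{lem:lift}(2) applies and no collapsing of preperiod/period occurs under the specialisation. Unlike the periodic case, where transversality of $\Gamma_{g_{n,d}^b}$ with the diagonal was an easy eigenvalue computation ($d^b\xi \neq 1$), here one needs the analogous transversality of $\Gamma_{g_{n,d}^{a+b}}$ and $\Gamma_{g_{n,d}^a}$ along the preperiodic locus, which again reduces to a one-variable computation for $\phi_0$ but must be carried out carefully, including on the coordinate hyperplanes where $g_{n,d}$ degenerates. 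Everything else is a routine adaptation of the arguments already given for Theorem \ref{thm:inv}.
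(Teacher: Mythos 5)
Your overall architecture is the same as the paper's — pigeonhole over $(a,b)$, invoke transitivity (Propositions \ref{prop:galois}/\ref{prop:prep}), specialise to $g_{n,d}$, then apply \cite[Theorem 9.2]{tao-vu-ac} — but the opening reduction is unjustified and in fact circular, and you've also misplaced the one genuine technical concern you raise.

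The opening step is the real problem. You replace $\ov Y$ by an irreducible positive-dimensional component and claim that ``applying $f^r$ for a suitable $r$, we may assume $\ov Y$ is \dots preperiodic.'' There is no reason a subvariety containing (or equal to the closure of) infinitely many preperiodic points should itself be preperiodic — that is exactly what the dynamical Manin--Mumford conjecture asserts, and is essentially what Theorem \ref{thm:zhang} is trying to establish. The iterates $f^r(\ov Y)$ have bounded dimension but need not stabilise, so you cannot conclude $\ov Y$ is $f^r$-invariant for any $r$. Once you have made this (invalid) reduction, invoking \cite[Theorem 5.1]{nf-selfmaps} to produce infinitely many preperiodic points on $X$ is also pointless: $X$ was constructed as the closure of an infinite set of preperiodic points, so it contains infinitely many of them by hypothesis. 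The paper simply starts there — take $X$ the Zariski closure of the given infinite preperiodic set, use Lemma \ref{lem:lift}(1) to see $K$ can be taken algebraic over $k_{n,d}$, take the union of Galois conjugates to get $X$ defined over $k_{n,d}$, and pigeonhole immediately; no invariance of $X$ is needed or available.

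Your closing paragraph flags multiplicity/transversality at $g_{n,d}$ as ``the main obstacle'' of the specialisation step, but in the paper's proof of Theorem \ref{thm:zhang} no such check is needed there. The paper defines $X'$ as the Zariski closure of the specialisations of the preperiodic points of $X$, and then argues in the \emph{opposite direction} from what you propose: every point of $\prep_{g_{n,d}}(a_i,b_i)$ lifts by Lemma \ref{lem:lift}(2) to a point of $\prep_{f_{n,d}}(a_i,b_i)$, which by the transitivity results lies in $X$, and hence its specialisation lies in $X'$. Lemma \ref{lem:lift}(2) is proved by a finite-flatness argument and carries no reducedness hypothesis, so there is nothing to check about multiplicities of the preperiodic points of $g_{n,d}$. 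The transversality issues you worry about are real, but they live inside the proof of Proposition \ref{prop:prep}, where $g_{n,d}$ is used as a monodromy base point; there the paper explicitly restricts to preperiodic points all of whose coordinates are nonzero (and treats $d=2$ separately via Lemma \ref{lem:deg2}). If you fix the opening paragraph and replace the direct specialisation of $X$ by the paper's lift-then-specialise argument via Lemma \ref{lem:lift}(2), the rest of your proof goes through.
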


As in the other results of this paper, one of the key ingredients of
the proof is the Galois action on the set of periodic points. However,
Proposition \ref{prop:galois} alone does not suffice since we also
need the transitivity of the monodromy action on the set of
preperiodic points. It turns out that transitivity does not hold for
the monodromy action on $\prep_f(a,b)$, for $f$ as in Theorem
\ref{thm:morton} and $d>2$. Nevertheless, by considering a larger
family of polynomials, we prove in Proposition \ref{prop:prep} that
the monodromy action on $\prep_{f_{n,d}}(a,b)$ is indeed transitive
for all $a,b$. This then allows us to use a specialisation argument to
prove Theorem \ref{thm:zhang}.

\subsection{}

Fix $d>1$. For $b>0$, consider the polynomial $P_b(c) =
\phi_c^b(0)$. The roots of $P_b(c)$ are exactly the parameters $c$ so
that $0$ is a periodic point of period dividing $b$ for the polynomial
$\phi_c(z)$.
\begin{lem}[Gleason] \label{lem:gleason} %\marginpar{lem:gleason}
  All roots of $P_b$ are multiplicity free.
\end{lem}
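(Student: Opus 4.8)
The plan is to reduce the multiplicity-freeness of the roots of $P_b(c) = \phi_c^b(0)$ to a statement about the transversality of certain intersections, and to prove transversality by a derivative computation in mixed characteristic. The key classical idea (due to Gleason, in the quadratic case) is that a multiple root of $P_b$ would have to be a root modulo every prime $p$, and in particular modulo a prime where one can compute the derivative explicitly and obtain a contradiction. So first I would set $Q(c) = P_b(c)$ viewed as a polynomial in $\Z[c]$ (the coefficients of $\phi_c^b(0)$ are manifestly integers), and suppose for contradiction that $Q$ has a multiple root $c_0$ in $\overline{\Q}$. Then $\gcd(Q, Q')$ is a nonconstant polynomial over $\Q$, hence over $\Z$ after clearing denominators, so there is some monic irreducible $\pi \in \Z[c]$ dividing both $Q$ and $Q'$ in $\Z[c]$; reducing modulo a suitable prime $p$ not dividing the resultant data, I get a common root $\bar c_0 \in \overline{\F_p}$ of $Q$ and $Q'$ mod $p$.

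Next I would translate ``$c_0$ is a multiple root of $P_b$'' into dynamical language: it says that $0$ is a periodic point of $\phi_{c_0}$ of period $e$ dividing $b$, and that the cycle containing $0$ is not ``transversally cut out'' as $c$ varies — more precisely, writing $b = e m$, one has $P_b(c) = \prod (\text{stuff})$ and the multiplicity of $c_0$ in $P_b$ is governed by $\frac{d}{dc}\big(\phi_c^e(0) - 0\big)\big|_{c_0}$ together with the multiplier $\lambda = (\phi_{c_0}^e)'(0)$. The standard computation gives that modulo $p$, if $0$ lies in a cycle of exact period $e$ with multiplier $\lambda$, then the order of vanishing of $P_b$ at $c_0$ is tied to $\lambda$ being a root of unity or $\lambda = 1$ (in char $p$, $\lambda = 0$ is the relevant degenerate case). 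So I would carry out the chain-rule computation: along the cycle $0 = z_0 \mapsto z_1 \mapsto \cdots \mapsto z_{e-1} \mapsto 0$ with $z_{j+1} = z_j^d + c$, differentiate $\phi_c^e(0)$ with respect to $c$, getting $\frac{\partial}{\partial c}\phi_c^e(0) = \sum_{j} \big(\prod_{k>j} d z_k^{d-1}\big)$. The crucial observation is that the first term of this sum (from $j = e-1$) is $1$, and I want to show the whole sum is a unit mod $p$ — equivalently that it is nonzero — by choosing $p$ appropriately.

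The main obstacle, and the place where the ``larger family'' philosophy of the surrounding section does \emph{not} apply (this is the one-variable $z^d + c$ family), is controlling that derivative sum $\sum_j \prod_{k>j} d z_k^{d-1}$ modulo $p$. The clean route is: since $0$ is in the cycle, one of the $z_k^{d-1}$ is $0$, so many terms of the sum vanish, and in fact the sum collapses to the single term $j = e-1$, giving exactly $1$ — wait, that only works if $0$ appears exactly once in the cycle, which is automatic. Let me restate: because $z_0 = 0$, every product $\prod_{k>j} d z_k^{d-1}$ with $j < e-1$ includes the factor $d z_0^{d-1}\cdot(\text{something})$? No — $k$ ranges over $k > j$ up to $e-1$, and $z_0$ reappears as $z_e = 0$, so for $j \le e-1$ the product $\prod_{j < k \le e-1} d z_k^{d-1}$ does \emph{not} include $k=0$. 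The honest fix is the one Gleason uses: look instead at $\frac{\partial}{\partial c}\phi_c^{e}(0)$ expanded so that the term $j=0$ contributes $\prod_{k=1}^{e-1} d z_k^{d-1} = \lambda / (d z_0^{d-1})$, which is problematic precisely because $z_0 = 0$ — but then factor $P_b$ as a product over $e \mid b$ and use that the \emph{first} derivative term is $1$ while all others are divisible by $z_0$, hence the derivative is $\equiv 1 \pmod{(c - c_0)}$ is false; rather, one shows $P_b'(c_0) \equiv (\text{product of multipliers over proper sub-cycles, times } 1) \not\equiv 0$ for a well-chosen $p$. I would therefore structure the final step as: (i) pick a prime $p$ with $p \nmid d$ and such that the specialization $\phi_{c_0 \bmod p}$ still has $0$ periodic of exact period $e$ with all the relevant cycles distinct (possible since these are open conditions avoiding finitely many primes); (ii) invoke the transversality computation — the differential of $\phi_c^e - \mathrm{id}$ at the point $(c_0, 0)$ in the $(c,z)$-plane has nonzero Jacobian mod $p$ because the $\partial/\partial z$ entry is $\lambda - 1 \ne 0$ (as $p \nmid d$ forces $\lambda \ne 0$, and a post-critically finite multiplier in char $0$ reducing mod $p$ avoids $1$ for good $p$) — to conclude $\bar c_0$ is a simple root of $P_b \bmod p$, contradicting $\pi^2 \mid \text{(relevant factor)}$. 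The delicate bookkeeping is separating the contribution of the exact period $e$ from the divisors of $b$, i.e. writing $P_b = \prod_{e \mid b} \Phi_e$ for the ``dynatomic'' factors $\Phi_e$ and checking each $\Phi_e$ is separable; that reduction, plus the single Jacobian computation, is the whole proof.
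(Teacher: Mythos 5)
There is a genuine gap, and it sits exactly at the point where you switch from the integrality setup to the choice of prime. Gleason's trick — and the proof the paper invokes (Douady--Hubbard, Partie II, Lemme 19.1, with $2$ replaced by $d$) — is to reduce modulo a prime $p$ \emph{dividing} $d$. From the recursion $P_{k+1}(c)=P_k(c)^d+c$ one gets $P_{k+1}'(c)=d\,P_k(c)^{d-1}P_k'(c)+1\equiv 1 \pmod p$ whenever $p\mid d$; equivalently, in your chain-rule sum $\sum_j \prod_{k>j} d\,z_k^{d-1}$ every term except the empty product carries a factor of $d$ and dies mod $p$. Since $P_b$ is monic with integer coefficients, a multiple root in characteristic zero would give a nonconstant monic common divisor of $P_b$ and $P_b'$ in $\Z[c]$, whose reduction mod $p$ would divide $P_b'\equiv 1$ — contradiction. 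You actually wrote down the correct derivative formula, but you never made the one observation that finishes the proof (kill all the nonconstant terms by choosing $p\mid d$); instead you pivoted to choosing $p\nmid d$, where no such collapse happens and the argument has to carry real dynamical content.

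Your substitute step (ii) does not supply that content. Nonvanishing of the $\partial/\partial z$ entry, i.e.\ $\lambda-1\neq 0$ (and indeed $\lambda=0$ automatically here, since $0$ is a critical point lying in its own cycle — your parenthetical ``$p\nmid d$ forces $\lambda\neq 0$'' is backwards), only says the curve $\{\phi_c^e(z)=z\}$ is smooth at $(c_0,0)$ and that the periodic point deforms uniquely in $c$. The multiplicity of $c_0$ as a root of $P_b(c)=\phi_c^b(0)$ is the intersection multiplicity of that curve with the line $\{z=0\}$, and transversality of \emph{that} intersection is equivalent to $\partial_c\phi_c^e(0)\big|_{c_0}\neq 0$ — which is precisely the statement to be proved, not a consequence of $\lambda-1\neq 0$. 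So as structured, the final step is circular/non sequitur, and the dynatomic bookkeeping you propose does not repair it. The fix is simply to return to your own first paragraph and take $p\mid d$: then the whole lemma is the two-line congruence above, which is exactly the paper's (cited) proof.
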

\begin{proof}
  The proof given in \cite[Lemma 19.1]{douady-hubbard-partie2} for
  $d=2$ goes through for general $d$ simply by replacing $2$ by $d$.
\end{proof}

\begin{lem} \label{lem:prep1} %\marginpar{lem:prep1}
  Fix $d>1$. For $(c, \ep) \in \A^2$, let $\phi_{c,\ep}: \A^1 \to
  \A^1$ be given by $\phi_{c,\ep}(z) = z(z-\ep)^{d-1} + c$. Then the
  monodromy action on $\prep_{\phi_{c,\ep}}(1,b)$ is transitive for
  all $b>0$.
\end{lem}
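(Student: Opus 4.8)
The goal is to show that for the two-parameter family $\phi_{c,\ep}(z) = z(z-\ep)^{d-1}+c$, the monodromy acts transitively on $\prep_{\phi_{c,\ep}}(1,b)$, the set of points $x$ with $\phi_{c,\ep}(x)$ periodic of period exactly $b$ but $x$ itself not periodic. The first step is to reduce to a statement about periodic points and a single preimage count. A point $x \in \prep(1,b)$ is a preimage of a point $y \in \Per(b)$; since $\phi_{c,\ep}$ has degree $d$, each periodic $y$ has exactly $d$ preimages (counted with multiplicity), one of which is $y$'s predecessor along the cycle (itself periodic) and the other $d-1$ of which are the candidates for $\prep(1,b)$. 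So $\prep(1,b)$ fibers over $\Per(b)$ with fibers of size $d-1$ (generically), and I would first show that the monodromy still acts transitively on the base $\Per(b)$ for this family, then show that the monodromy of a fiber — the Galois group permuting the $d-1$ non-periodic preimages of a fixed $y$ — is the full symmetric group $S_{d-1}$, or at least transitive, and that enough of the base monodromy lifts to connect the fibers. Transitivity on the base: the subfamily $\ep = 0$ recovers $\phi_c(z) = z^d + c$, the unicritical family, so Theorem \ref{thm:morton} (via its transitivity assertion, exactly as in Proposition \ref{prop:galois}) already gives transitivity of monodromy on $\Per(b)$ for the restricted family, hence a fortiori for the full two-parameter family.

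The heart of the argument is the fiber monodromy. Fix a periodic point $y$ (which I can take to be, say, a fixed point by first moving along the base, or just work at a convenient specialization). Its preimages under $\phi_{c,\ep}$ are the roots of $z(z-\ep)^{d-1} + c - y = 0$. I would vary $\ep$ with $c$ adjusted (or vary $c$) to deform this degree-$d$ polynomial; the key point of introducing the extra parameter $\ep$ — and the reason the one-parameter family $z^d+c$ fails for $d>2$ — is precisely that $z(z-\ep)^{d-1}$ has two distinct critical points ($z = 0$ is a simple critical point of a different flavor, coming from the factor $z$, while $z = \ep$ is a critical point of multiplicity $d-2$... more precisely the derivative is $(z-\ep)^{d-2}(dz - \ep)$, so the critical points are $\ep$ with multiplicity $d-2$ and $\ep/d$ with multiplicity $1$), giving an extra degree of freedom to separate preimages and create the transpositions needed to generate a transitive (indeed full symmetric) group on the $d-1$ non-periodic preimages. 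Concretely I would: (i) identify parameter values where exactly two of the $d$ preimages of $y$ collide (a simple ramification point of the preimage cover), which by Gleason-type arguments — Lemma \ref{lem:gleason} — occur with multiplicity one, producing a transposition in the fiber monodromy; (ii) check that as the parameters vary the collisions can be arranged to involve any two of the $d-1$ non-periodic preimages, so the transpositions generate $S_{d-1}$; and (iii) separately verify that the periodic preimage of $y$ stays disconnected from the non-periodic ones, so the fiber really is the $(d-1)$-element set.

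**The main obstacle.** I expect the hard part to be step (ii)–(iii): controlling exactly which preimages collide, and ensuring the periodic preimage does not get entangled. This is a statement about the critical orbit structure of $\phi_{c,\ep}$ and which discriminant components of the preimage cover are nonempty/irreducible. The clean way is probably to specialize to a well-chosen base point in $\A^2$ — analogous to the role of $g_{n,d}$, the $d$-th power map, in Proposition \ref{prop:galois} — where the preimages of each periodic point are completely explicit (for instance $\ep = 0$ makes preimages of $y$ literally the $d$-th roots of $y - c$, but then one loses the separation, so the right base point is somewhere with $\ep \ne 0$), compute the fiber cover there, and then use the transitivity on the base together with the extra $\ep$-direction to wiggle and produce all transpositions. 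An alternative, which may be cleaner, is a direct irreducibility argument: show that the variety $\{(c,\ep,x,y) : \phi_{c,\ep}(x) = y,\ \phi_{c,\ep}^b(y) = y,\ y \notin \text{smaller periods},\ x \text{ not periodic}\}$ is irreducible over $\A^2_{c,\ep}$, by exhibiting it as a cover of the (irreducible) periodic-point variety with irreducible generic fiber — the fiber being $\P^1 \setminus \{\text{one point}\}$ covering via $z \mapsto z(z-\ep)^{d-1}$, whose monodromy one can compute directly from its critical values. Either way, once transitivity of the fiber monodromy and transitivity of the base monodromy are both in hand, and one checks the short exact sequence of the tower of covers splits the count as $(d-1) \cdot |\Orb|$ compatibly, transitivity of the monodromy on all of $\prep_{\phi_{c,\ep}}(1,b)$ follows.
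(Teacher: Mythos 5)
Your opening reduction---transitivity on $\Per_{\phi_{c,\ep}}(b)$ via the $\ep=0$ subfamily and Theorem \ref{thm:morton}, plus the observation that it then suffices to exhibit a monodromy element fixing one periodic point and acting transitively on its $d-1$ strictly preperiodic preimages---coincides with the paper's first step. But the heart of the lemma is actually producing that element, and this is exactly where your proposal stops: steps (ii)--(iii), which you yourself flag as the main obstacle, are never carried out, and neither of the strategies you sketch obviously works. The transposition route requires finding parameters at which a point of exact period $b$ persists while precisely two of its non-periodic preimages collide simply and the periodic preimage stays apart, verifying simplicity of each such collision, and showing the resulting transpositions connect all $d-1$ preimages; you give no mechanism for any of this. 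The alternative (irreducibility of the incidence variety, with fiber monodromy ``computed from the critical values of $z\mapsto z(z-\ep)^{d-1}$'') is close to circular: as $(c,\ep)$ vary, the point $y$ whose preimage you take is constrained to move along the periodic-point locus, so the monodromy of the fixed cover $z\mapsto z(z-\ep)^{d-1}$ of the $z$-line by itself proves nothing about the parameter-space monodromy you need.

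The idea missing from your outline is to use the \emph{degenerate} critical point at $z=\ep$ (local degree $d-1$) instead of simple ramification. By Lemma \ref{lem:gleason} there is $c_b$ with $0$ periodic of exact period $b$ and multiplicity one for $\phi_{c_b}$, and this persists for small $\ep\neq 0$ as a parameter $c_{b,\ep}$. Since $\phi_{c,\ep}(\ep)=\phi_{c,\ep}(0)=c$, at $(c_{b,\ep},\ep)$ the point $\ep$ is the \emph{unique} strictly preperiodic preimage of the periodic point $c_{b,\ep}$, occurring with multiplicity $d-1$; as $c$ varies near $c_{b,\ep}$ (with $\ep$ fixed) it splits into $d-1$ distinct points of $\prep_{\phi_{c,\ep}}(1,b)$ lying over the uniquely deforming periodic point. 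The paper then shows the curve $\phi_{c,\ep}^{b+1}(z)-\phi_{c,\ep}(z)=0$ is smooth at $(\ep,c_{b,\ep})$---substitute $z=\ep$, use $\phi_{c,\ep}(\ep)=\phi_{c,\ep}(0)$ to reduce to $z=0$, and invoke Gleason's lemma once more---so the projection to the $c$-line is totally ramified of degree $d-1$ there, and a small loop around $c_{b,\ep}$ yields a monodromy element fixing the periodic point and cyclically permuting all $d-1$ preimages in a single stroke. Without this construction, or a genuinely worked-out substitute for it, your proposal is an outline with the decisive step still open.
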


\begin{proof}
  It follows from Theorem \ref{thm:morton} that the monodromy action
  on $\Per_{\phi_{c,\ep}}(b)$ is transitive for all $b>0$, so it
  suffices to prove that for any $b>0$ there exists $x \in
  \Per_{\phi_{c,\ep}}(b)$ and an element $\gamma$ of the monodromy so
  that $\gamma(x) = x$ and $\gamma$ cyclically permutes the $d-1$
  elements of $\phi_{c,\ep}^{-1}(x)$ (so if $d=2$ there is nothing to
  prove).

  By Lemma \ref{lem:gleason} and a simple counting argument, it
  follows that for any $b>0$, there exists $c_b \in \C$ so that $0 \in
  \Per_{\phi_{c_b}}(b)$. Since $0$ is a critical point of
  $\phi_{c_b}$ it is of multiplicity $1$ as an element of
  $\Per_{\phi_{c_b}}(b)$. It follows that for $|\ep| \ll 0$, there
  exists $c_{b,\ep}$ close to $c_b$ so that $0$ is a periodic point of
  $\Per_{\phi_{c_{b,\ep},\ep}}(b)$ of multiplicity $1$. By the
  definition of $\phi_{c,\ep}$, it then follows that $\ep$ is the
  unique element of $\prep_{\phi_{c_{b,\ep},\ep}}(1,b)$ which is
  mapped to $\phi_{c_{b,\ep},\ep}(0) = c_{b,\ep}$ by
  $\phi_{c_{b,\ep},\ep}$.

  We now consider the $1$-parameter family of maps $\phi_{c,\ep}$ with
  $\epsilon$ such that $|\ep| \ll 0$ fixed. In a neighbourhood of
  $c_{b,\ep}$, the element $c_{b,\ep} \in
  \Per_{\phi_{c_{b,\ep},\ep}}(b)$ deforms uniquely as an element of
  $\Per_{\phi_{c,\ep}}(b)$. However, since the critical points of
  $\phi_{c,\ep}$ are independent of $c$, it follows that the element
  $\ep \in \prep_{\phi_{c_{b,\ep},\ep}}(1,b)$ deforms to $d-1$
  distinct elements of $\prep_{\phi_{c,\ep}}(1,b)$ which are all
  mapped to the deformed periodic point above by $\phi_{c,\ep}$.

  We claim that the monodromy action of a small loop around
  $c_{b,\ep}$ gives us the required element $\gamma$. Since
  $c_{b,\ep}$ deforms uniquely as a periodic point, the mondromy
  action of $\gamma$ on this point is trivial as required. To prove
  that the second condition is satisfied let $C \subset \A^2$ be the
  curve consisting of all points $(z,c)$ so that $z$ is a preperiodic
  points of preperiod $1$ and period $b$ of $\phi_{c,\ep}$ (with $\ep$
  fixed as above). It suffices to prove that $C$ is smooth at the
  point $(\ep, c_{b,\ep})$.

  To see this we consider the explicit equation for $C$. It is given
  in an neighbourhood of $(\ep, c_{b,\ep})$ by
  \begin{equation} \label{eq:d-1} \phi_{c,\ep}^{ b+1}(z) -
    \phi_{c,\ep}(z) = 0 \ .
  \end{equation}
  To see that it is smooth at $(c_{b,\ep},\ep)$ it suffices to
  substitute $\ep$ for $z$ and check that the resulting polynomial in
  $c$, $\phi_{c,\ep}^{ b+1}(\ep)- \phi_{c,\ep}(\ep)$, has $c_{b,\ep}$
  as a root of multiplicity $1$. However, from the definition of
  $\phi_{c,\ep}$ it follows that
  \[
  \phi_{c,\ep}^{ b+1}(\ep)- \phi_{c,\ep}(\ep) = \phi_{c,\ep}^{ b+1}(0)
  - \phi_{c,\ep}(0) \
  \]
  so we may replace $\ep$ by $0$. In a neighbourhood of the point
  $(0,c_{b,\ep})$ the curve given by \eqref{eq:d-1} is smooth since it
  parametrizes periodic points of period $b$ and the periodic point
  $0$ of $\phi_{c_{b,e},\ep}$ is of multiplicity $1$. To show that the
  multiplicity of $c_{b,\ep}$ as a root of $\phi_{c,\ep}^{ b+1}(0) -
  \phi_{c,\ep}(0)$ is $1$ we may then specialize $\ep$ to $0$ so it
  suffices to consider the multiplicity of $c_b$ as a root of the
  polynomial $ P_b(c) = \phi_{c}^{ b}$ By Lemma \ref{lem:gleason},
  this multiplicty is indeed $1$ as required.
\end{proof}

To prove the transitivity of the monodromy action on
$\prep_{\phi_{c,\ep}}(a,b)$ for $a>1$, we shall need some results
about Misiurewicz points. We refer the reader to \cite{lau-schleicher}
and \cite{eberlein} for the basic facts that we use below, which
generalize results proved in \cite{douady-hubbard-partie1} in the case
$d=2$. Recall that $c_0 \in \C$ is called a Misiurewicz point if $c_0$
is a strictly preperiodic point of the map $\phi_{c_0}$. By the
results of \emph{op.~cit.}, for any strictly preperiodic angle $\theta
\in \Q/\Z$, there is a Misiurewicz point $c_0$ such that the parameter
ray with angle $\theta$ lands at $c_0$. By \cite[Lemma 8.3]{eberlein},
the preperiod of $\theta$ (with respect to multiplication by $d$) is
equal to the preperiod of $c_0$ (with respect to $\phi_{c_0}$) and the
period of the kneading sequence of $\theta$, $K(\theta)$, is equal to
the period of $c_0$ (with respect to $\phi_{c_0}$). For $\theta =
1/(d^a\cdot(d^b-1))$, $a,b > 0$, the preperiod of $\theta$ is $a$ and
the period of $K(\theta)$ is $b$, so there exists a Misiurewicz point
with any preperiod $a>0$ and period $b$.

A point $\l \in \C$ is called parabolic of period $b$ if it is the
landing point of a parameter ray with angle $\theta$ which is periodic
of period $b$. By the results of \cite{douady-hubbard-partie1},
\cite{eberlein} a parabolic point is never also a Misiurewicz point.

\smallskip 

To prove the transitivty of the monodromy action for $a>1$ we shall
need the following analogue of Lemma \ref{lem:gleason} due to Douady
and Hubbard \cite[Corollaire 8.5]{douady-hubbard-partie1}:

\begin{lem} \label{lem:dh} %\marginpar{lem:dh}
  For a Misiurewicz point $c_0$ as above, the equation
  $\phi_{c}^{a+1+b}(0) - \phi_{c}^{a+1}(0) = 0$ has a simple root at
  $c = c_0$.
\end{lem}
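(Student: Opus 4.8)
The plan is to reduce the assertion to Gleason's Lemma \ref{lem:gleason} by a dynamical factorization of the polynomial $\phi_c^{a+1+b}(0)-\phi_c^{a+1}(0)$ near $c=c_0$, exactly in the spirit of the proof of Lemma \ref{lem:prep1} where the equation $\phi_{c,\ep}^{b+1}(z)-\phi_{c,\ep}(z)=0$ near a Misiurewicz-type point was analyzed by specializing and comparing with $P_b$. Write $G(c)=\phi_c^{a+1+b}(0)-\phi_c^{a+1}(0)$ and $H(c)=\phi_c^{a+b}(0)-\phi_c^{a}(0)$. Since $\phi_c^{a+1+b}(0)=\phi_c\bigl(\phi_c^{a+b}(0)\bigr)$ and $\phi_c^{a+1}(0)=\phi_c\bigl(\phi_c^{a}(0)\bigr)$, and $\phi_c$ is a degree-$d$ polynomial $\phi_c(z)=z^d+c$, we get the identity $G(c)=\bigl(\phi_c^{a+b}(0)\bigr)^d-\bigl(\phi_c^{a}(0)\bigr)^d$, which factors as $H(c)$ times $\sum_{i=0}^{d-1}\bigl(\phi_c^{a+b}(0)\bigr)^{d-1-i}\bigl(\phi_c^{a}(0)\bigr)^{i}$. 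So it suffices to show (i) $c_0$ is a simple root of $H$, and (ii) $c_0$ is not a root of the second factor.

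For (ii), a root of the second factor at $c=c_0$ would force, together with $H(c_0)=0$ (which holds since $c_0$ is strictly preperiodic of preperiod $a$ and period $b$), that $\phi_{c_0}^{a+b}(0)=\zeta\,\phi_{c_0}^{a}(0)$ for some $d$-th root of unity $\zeta\neq1$ with moreover the critical value $\phi_{c_0}(0)=c_0$; but $\phi_{c_0}^{a+b}(0)=\phi_{c_0}^{a}(0)$ already (preperiod exactly $a$), so the second factor equals $d\cdot\bigl(\phi_{c_0}^{a}(0)\bigr)^{d-1}$, and this vanishes only if $\phi_{c_0}^{a}(0)=0$, i.e. $0$ is periodic of period $b$, contradicting that $a>0$ is the \emph{exact} preperiod of the Misiurewicz point $c_0$. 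Hence (ii) holds and the multiplicity of $c_0$ as a root of $G$ equals its multiplicity as a root of $H$. Iterating this reduction in the preperiod, one is reduced to the case $a=0$, i.e. to showing $c_0$ is a simple root of $\phi_c^{b+1}(0)-\phi_c(0)=\bigl(\phi_c^{b}(0)\bigr)^d-c^d$; but this is $P_b(c)\cdot\bigl(\text{unit near }c_0\bigr)$ by the same factorization together with $P_b(c_0)=\phi_{c_0}^b(0)=0$ and the non-vanishing of the companion factor (here one uses $c_0\neq0$, which holds since $c_0$ is strictly preperiodic while $0$ is a fixed point of $\phi_0$). By Gleason's Lemma \ref{lem:gleason}, $c_0$ is a simple root of $P_b$, and we are done.

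The main obstacle is bookkeeping the non-vanishing of the companion factors along the iterated reduction, i.e. making sure that at each stage $\phi_{c_0}^{j}(0)\neq0$ for the relevant intermediate $j$; this is exactly where the hypothesis that $c_0$ is a \emph{genuine} Misiurewicz point (strictly preperiodic, exact preperiod $a>0$) is used, since $\phi_{c_0}^{j}(0)=0$ would mean $0$ lands on itself after $j$ steps, forcing $c_0$ to be a root of some $P_e$ and hence periodic rather than strictly preperiodic, contradicting that a parabolic point is never a Misiurewicz point (as recalled just before the statement). Once these non-vanishings are in place, the whole argument is a transparent reduction to Gleason's lemma, paralleling the specialization trick already used in the proof of Lemma \ref{lem:prep1}. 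Alternatively, one can simply cite \cite[Corollaire 8.5]{douady-hubbard-partie1} for $d=2$ and observe, as with Lemma \ref{lem:gleason}, that the argument there goes through verbatim for general $d$ with $2$ replaced by $d$.
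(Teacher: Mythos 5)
Your factorization $G(c)=\phi_c^{a+1+b}(0)-\phi_c^{a+1}(0)=\bigl(\phi_c^{a+b}(0)\bigr)^d-\bigl(\phi_c^{a}(0)\bigr)^d = H(c)\cdot S(c)$ with $H=\phi_c^{a+b}(0)-\phi_c^a(0)$ is correct, but your (i)/(ii) get the roles of the two factors exactly backwards. In the paper's convention the preperiod $a$ is that of the critical \emph{value} $c_0=\phi_{c_0}(0)$, so the critical \emph{point} $0$ has preperiod $a+1$: $\phi_{c_0}^{a+1}(0)$ is periodic while $\phi_{c_0}^{a}(0)$ is \emph{not}. Hence $H(c_0)=\phi_{c_0}^{a+b}(0)-\phi_{c_0}^{a}(0)\neq 0$, and writing $\phi_{c_0}^{a+b}(0)=\zeta\,\phi_{c_0}^{a}(0)$ with $\zeta$ a $d$-th root of unity, strict preperiodicity forces $\zeta\neq 1$, whence $S(c_0)=\bigl(\phi_{c_0}^{a}(0)\bigr)^{d-1}\sum_{j=0}^{d-1}\zeta^{j}=0$. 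So the companion factor vanishes and $H$ is a unit, the opposite of what you assert, and the proposed iterated reduction in $a$ never gets started: the multiplicity of $c_0$ in $G$ is entirely carried by $S$, which is not $P_b$ and to which Gleason's lemma does not directly apply. A concrete check with $d=2$, $c_0=-2$, $a=b=1$: $G(c)=\phi_c^3(0)-\phi_c^2(0)=c^3(c+2)$, $H(c)=c^2$ so $H(-2)=4\neq0$, and $S(c)=c(c+2)$ carries the simple zero at $c_0=-2$.

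Your fallback, citing \cite[Corollaire 8.5]{douady-hubbard-partie1} and generalising, is essentially what the paper does, though the paper is careful to point out that the generalisation is not quite ``replace $2$ by $d$ verbatim'': one also has to replace \cite[Theorem 8.2]{douady-hubbard-partie1} by \cite[Theorems 8.1 \& 8.2]{eberlein}, while \cite[Proposition 8.5]{douady-hubbard-partie1} already holds for general $d$. So the fallback is the right route, but your elementary reduction as written is broken by the off-by-one in the preperiod and should be removed or corrected along the lines above (one would now need a genuinely new argument that $S$, not $P_b$, has a simple zero at $c_0$, which is not a formal consequence of Gleason).
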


\begin{proof}
  The lemma is formulated and proved in \emph{op.~cit.}~only for
  $d=2$, however \cite[Proposition 8.5]{douady-hubbard-partie1} holds
  for general $d$ and so the proof of \cite[Corollaire
  8.5]{douady-hubbard-partie1} goes through if we substitute
  \cite[Theorems 8.1 \& 8.2]{eberlein} for \cite[Theorem
  8.2]{douady-hubbard-partie1}.
\end{proof}

\begin{lem} \label{lem:prep2} %\marginpar{lem:prep2}
  The monodromy action of the two dimensional family of polynomials
  $\phi_{c,\ep}$ on $\prep_{\phi_{c,\ep}}(a,b)$ is transitive
  for all $a,b>0$.
\end{lem}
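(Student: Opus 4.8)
The plan is to reduce the transitivity of the monodromy action on $\prep_{\phi_{c,\ep}}(a,b)$ to the two facts already available: transitivity on $\prep_{\phi_{c,\ep}}(1,b)$ (Lemma \ref{lem:prep1}) and transitivity on $\Per_{\phi_{c,\ep}}(b)$ (which follows from Theorem \ref{thm:morton}). Since $\phi_{c,\ep}^{a-1}$ maps $\prep_{\phi_{c,\ep}}(a,b)$ onto $\prep_{\phi_{c,\ep}}(1,b)$ compatibly with the Galois/monodromy action, and the latter action is transitive, it suffices to fix one point $y_0 \in \prep_{\phi_{c,\ep}}(1,b)$ and show that the monodromy acts transitively on the fibre $(\phi_{c,\ep}^{a-1})^{-1}(y_0) \cap \prep_{\phi_{c,\ep}}(a,b)$ via the stabilizer of $y_0$. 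Equivalently, I want to produce, for a well-chosen parameter value, a monodromy element $\gamma$ that fixes a chosen periodic point together with the relevant preimage of preperiod $1$, but which induces a transitive (indeed cyclic) permutation on the set of preperiod-$a$ preimages lying over that preperiod-$1$ point. Iterating this back through $\phi_{c,\ep}^{-1}$ at each preperiod level, together with Lemma \ref{lem:prep1} at the bottom level, then generates the full symmetric-type group needed for transitivity on all of $\prep_{\phi_{c,\ep}}(a,b)$.

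The key step is to find a good basepoint in the $(c,\ep)$-parameter plane near which the branching structure of the preperiodic-point curve degenerates in a controlled way. Here is where the Misiurewicz machinery enters. By the discussion preceding the lemma, for the angle $\theta = 1/(d^a(d^b-1))$ there is a Misiurewicz point $c_0$ with preperiod exactly $a$ and period exactly $b$ for $\phi_{c_0}$; so the critical point $0$ is itself a point of $\prep_{\phi_{c_0}}(a,b)$. Lemma \ref{lem:dh} says that $c_0$ is a simple root of $\phi_c^{a+1+b}(0) - \phi_c^{a+1}(0)$, i.e. the curve of preperiod-$(a)$, period-$(b)$ preperiodic points is transverse to the section $z = \phi_c^{?}(0)$ at $c = c_0$ in exactly the same way Lemma \ref{lem:gleason} was used in the proof of Lemma \ref{lem:prep1}. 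As in that proof, I would then perturb to $\ep \neq 0$ with $|\ep| \ll 1$: because the critical points of $\phi_{c,\ep}$ are $0$ and $\ep$, and $\ep$ is independent of $c$, the single critical-orbit point $0$ of preperiod $a$ at $(c_0, 0)$ splits, under a small loop in the $c$-parameter around the appropriate perturbed Misiurewicz value $c_{0,\ep}$, into $d-1$ distinct preimages at the last step while the periodic point it eventually maps to stays fixed — exactly the cyclic monodromy element $\gamma$ required at level $a$. The simplicity statement of Lemma \ref{lem:dh} (after the substitution $z = \ep \rightsquigarrow z = 0$ trick used in Lemma \ref{lem:prep1}) is what guarantees smoothness of the relevant preperiodic-point curve at this point, hence that the local monodromy is a single $(d-1)$-cycle and not something smaller.

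The main obstacle I anticipate is bookkeeping the interaction between the two critical points $0$ and $\ep$ across several preperiod levels: I must make sure that when I produce the cyclic generator over a preperiod-$(a-1)$ point, I can simultaneously arrange the chosen base periodic point and all intermediate preimages to be left fixed, so that the generators at different levels really do compose to give full transitivity rather than merely transitivity within each fibre. Concretely I would argue by downward induction on the preperiod level $j$ from $a$ to $1$: assuming transitivity on $\prep_{\phi_{c,\ep}}(j,b)$ is realized by monodromy elements fixing a chosen flag of points below level $j$, I use a small loop around a perturbed Misiurewicz value (for the angle $1/(d^{j}(d^b-1))$) to get a new cyclic generator at level $j$ compatible with that flag, and combine it with the inductive hypothesis, the base case $j = 1$ being precisely Lemma \ref{lem:prep1}. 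The non-collision of the two critical orbits for generic small $\ep$ — which is what makes every intermediate preimage multiplicity one, so that the specialization/deformation arguments apply level by level exactly as in Lemma \ref{lem:prep1} — is the technical point that needs the most care, but it follows from Lemma \ref{lem:dh} together with the openness of the multiplicity-one condition.
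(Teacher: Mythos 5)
Your overall architecture is the same as the paper's (induction on the preperiod, transitivity on each fibre of $\phi^{-1}$, Misiurewicz points plus Lemma \ref{lem:dh} giving a simple root, and a small parameter loop producing a cyclic local monodromy), but the inductive step at preperiod $j\geq 2$ has a genuine gap. Over a strictly preperiodic point $x$ of preperiod $j-1\geq 1$, \emph{all} $d$ points of $\phi^{-1}(x)$ have preperiod $j$, so what is needed is a monodromy element acting transitively on all $d$ preimages (a $d$-cycle). Your construction insists on perturbing to $\ep\neq 0$ and looping in $c$ around a perturbed Misiurewicz parameter; but for $\ep\neq 0$ the finite critical points of $\phi_{c,\ep}(z)=z(z-\ep)^{d-1}+c$ are $z=\ep$ with local degree $d-1$ and $z=\ep/d$ with local degree $2$ (not $0$ and $\ep$), so no finite critical point is totally ramified and a small loop around such a parameter can produce at most a $(d-1)$-cycle on the $d-1$ preimages that collide there, fixing the remaining preimage --- which is exactly what you assert (``splits \dots into $d-1$ distinct preimages''). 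A $(d-1)$-cycle fixing one point of a $d$-element fibre is not transitive on that fibre, and nothing in your induction supplies the missing element of the stabilizer of $x$ moving the fixed preimage into the rest; so transitivity on $\prep_{\phi_{c,\ep}}(j,b)$ for $j\geq 2$ does not follow. (Incidentally, the ``flag-fixing'' bookkeeping you worry about is unnecessary: any monodromy element carrying one preimage to another automatically fixes $x$ and its entire forward orbit, because the Galois action commutes with $\phi_{c,\ep}$; the real issue is the cycle type, not compatibility between levels.)

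The paper's proof avoids this precisely by \emph{not} perturbing at preperiods $\geq 2$: for $a>1$ it works with the one-parameter family $\phi_c$ (i.e.\ $\ep=0$), chooses a Misiurewicz point $c$ of preperiod $a-1$ and period $b$ (so the relevant angle is $1/(d^{a-1}(d^b-1))$; with your choice of preperiod $a$ for $c_0$ the critical point $0$ has preperiod $a+1$, an off-by-one), where $0$ is the \emph{totally ramified} unique preimage of the critical value. Lemma \ref{lem:dh} then gives smoothness of the preperiodic curve $D$ at $(c,0)$, so the projection to the parameter is totally ramified of degree $d$ there and the small loop yields a genuine $d$-cycle on $\phi_{\l}^{-1}(c_{\l})$. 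The two-parameter family, with the Gleason-type simplicity of Lemma \ref{lem:gleason}, is needed only at preperiod $1$ (Lemma \ref{lem:prep1}), where the fibre over a periodic point consists of one periodic and $d-1$ strictly preperiodic preimages and a $(d-1)$-cycle is exactly right. If you wish to keep $\ep\neq 0$ at higher levels you would have to adjoin a second stabilizer element (for instance a transposition coming from the simple critical point $\ep/d$ hitting the same preperiodic orbit) so that the generated group is transitive on $d$ letters; as written, your proposal does not do this.
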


\begin{proof}
  We already know transitivity if $a = 1$. Thus, by induction we may
  assume $a>1$ and then it suffices to prove that for the one
  dimensional family of polynomials $\phi_c$, there exists $x \in
  \Per_{\phi_c}(a-1,b)$ and an element $\gamma$ of the monodromy such
  that $\gamma(x) = x$ and $\gamma$ induces a cyclic permutation on
  the $d$ elements of $\Per_{\phi_c}(a-1,b)$ comprising
  $\phi_c^{-1}(x)$.

  Since $0$ and $\infty$ are the only critical points of $\phi_{\l}$,
  the preperiodic points for general $\l$ are multiplicity free. Let
  $c$ be a Misiurewicz point of preperiod $a-1$ and period $b$ and
  consider a small loop $\gamma$ in the parameter plane around $c$,
  such that all parabolic points of period $b$ and all Misiurewicz
  points of preperiod $a$ and period $b$ are outside this loop. We
  note that the preperiodic points of $\phi_c$ of preperiod $\leq a$
  and period $b$ are multiplicity free since none of them are critical
  values and a Misiurewicz point is never a parabolic point. This
  remains true in a neighbourhood of $c$ so we may assume that this
  holds in a neighbourhood $U$ of $\gamma$ containing its interior; in
  particular, $c$ deforms uniquely as a preperiodic point $c_{\l} \in
  \prep_{\phi_{\l}}(a,b)$ as $\l$ varies in this neighbourhood.

  Since $\phi_c$ is totally ramified at $0$, $0$ is the unique element
  of $\prep_{\phi_c}(a,b)$ mapping to $c$ by $\phi_c$. By
  construction, for any other $\l \in U$, there are $d$ points of
  $\prep_{\phi_{\l}}(a,b)$ mapping to $c_{\l}$ and these points all
  come together at $0$ as $\l \to c$. The set of these points in a
  neighbourhood of $(c,0)$ is exactly the zero locus $D$ of the
  polynomial $\phi_{\l}^{a+1+b}(z) - \phi_{\l}^{a+1}(z)$. By Lemma
  \ref{lem:dh}, the multiplicity of this after setting $z = 0$ is $1$
  at $(c,0)$, so it follows that $D$ must be smooth at this
  point. Since the inverse image of $c$ in $D$ is a single point, it
  follows that the map induced by the projection to the first factor
  is totally ramified of degree $d$ at $(c,0)$. Consequently, the
  monodromy around $\gamma$ induces a cyclic permutation of order $d$
  on $\phi_{\l}^{-1}(c_{\l})$. We may thus take $x = c_{\l}$ for any
  $\l$ with $|\l| \ll 0$ to complete the proof.
\end{proof}

\begin{prop} \label{prop:prep} %\marginpar{prop:prep}
  The Galois action on $\prep_{f_{n,d}}(a,b)$ is transitive for all
  $a,b > 0$.
\end{prop}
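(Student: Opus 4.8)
The plan is to reduce the statement for $f_{n,d}$ to the one-variable family $\phi_{c,\ep}$ of Lemma \ref{lem:prep2}, in essentially the same way that Proposition \ref{prop:galois} was reduced to the one-variable case of Theorem \ref{thm:morton}. The one-parameter family $\phi_{\l}$ is insufficient here — as noted in the text, transitivity on $\prep_{\phi_{\l}}(a,b)$ fails for $d>2$ — which is precisely why Lemma \ref{lem:prep2} uses the richer two-parameter family. First I would set up an appropriate basepoint in $\Mor_{n,d}$ with multiplicity-free preperiodic points so that the Galois action on $\prep_{f_{n,d}}(a,b)$ can be computed there; the $d$-power map $g_{n,d}$ again works, since its differential along $\Gamma_{f^m} \cap \Gamma_{f^n}$ has no eigenvalue equal to $1$ (the eigenvalues are $d^i \cdot \xi$ with $\xi$ a root of unity or $0$, and $d>1$ forces these away from $1$), so $P_f(a+b,a)$ meets $P_f(m,n)$ transversally and the relevant fibre is reduced. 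The preperiodic points of $g_{n,d}$ of preperiod $a$ and period $b$ are the points $[\xi_0,\dots,\xi_n]$ with each $\xi_i$ either $0$ or a root of unity whose $\phi_0$-preperiod is $\leq a$ and $\phi_0$-period divides $b$, with the lcm of these preperiods exactly $a$ and the lcm of the periods exactly $b$.

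Next I would introduce the coordinate-wise family of deformations analogous to the $f_i$ in the proof of Proposition \ref{prop:galois}, but now built out of the two-parameter polynomials $\phi_{c,\ep}$ rather than $\phi_c$. Concretely, on the chart $x_i \neq 0$ one takes the $n$-fold product of maps $\phi_{c_j,\ep_j}(z) = z(z-\ep_j)^{d-1} + c_j$ in the non-pivot coordinates, with the parameters $(c_j,\ep_j)$ varying; on the complement $x_i = 0$ the map is independent of these parameters, so the monodromy of that sub-family acts trivially there. Let $G_i$ be the corresponding subgroup of the monodromy acting on $\prep_{g_{n,d}}(a,b)$ and let $G$ be the group they generate. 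By Lemma \ref{lem:prep2} each $G_i$ acts transitively — compatibly with the $\phi_0$-dynamics — on the preperiodic points in the varying coordinate, for every relevant preperiod and period: that is, for any fixed preperiod $a'\leq a$ and period $b'\mid b$, and keeping the other coordinates fixed (some non-pivot coordinate equal to $1$ so the chart is well-defined), we can send $\xi_j$ to any other element of $\prep_{\phi_0}(a',b')$ and, crucially, induce the full permutation including the cyclic shifts on the $\phi_c^{-1}$ and $\phi_{c,\ep}^{-1}$ fibres that distinguish preperiodic from periodic points.

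The combinatorial heart, then, is to chain these one-coordinate moves together to connect an arbitrary $[\xi_0,\dots,\xi_n] \in \prep_{g_{n,d}}(a,b)$ to a fixed standard point. I would argue exactly as in Proposition \ref{prop:galois}: first use $G$ to clear zero coordinates and normalize a coordinate to $1$ (legitimate since $0,1 \in \prep_{\phi_0}(0,1)$ are in the same $\phi_0$-orbit-type stratum and every $G_i$ acts transitively on the relevant strata), then use the lcm-adjustment lemma (Lemma \ref{lem:roots}, together with its preperiodic analogue: the preperiod of $\phi_0$-preperiodic points is $0$ or $1$ — $\phi_0(z) = z^d$ has preperiodic points only $0$ and the prime-to-$d$ roots of unity, all periodic, plus nilpotents which have preperiod $1$ — so actually the preperiodic-but-not-periodic points of $g_{n,d}$ are exactly those with some coordinate $0$; hence after clearing zeros we are reduced to the periodic case already handled in Proposition \ref{prop:galois} for the \emph{value} of each coordinate, and the only genuinely new content is realizing the non-trivial permutation of the $\phi_{c,\ep}^{-1}$-fibres). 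This last observation is in fact a simplification: since $\prep_{g_{n,d}}(a,b)$ with $a \geq 1$ forces at least one coordinate to be $0$, and once such a point is reached the transitivity statement on the full preimage follows by combining Proposition \ref{prop:galois} on the values with the fibre-permutation elements of $G$ supplied by Lemma \ref{lem:prep2}. The main obstacle I anticipate is bookkeeping the interaction between the pivot coordinate $x_i$ and the zero coordinates: when $x_i = 0$ the family $f_i$ degenerates, so one must be careful always to choose a pivot among the non-zero coordinates, and to sequence the clearing of zeros and the normalization so that at every stage a valid chart is available; this is routine but requires the same patient case analysis as the proof of Proposition \ref{prop:galois}.
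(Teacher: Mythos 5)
Your general strategy is the right one and matches the paper's opening moves: use $g_{n,d}$ as basepoint, replace the one-parameter family $\phi_c$ by the two-parameter family $\phi_{c,\ep}$ in the coordinate-wise families $f_i$, and chain together one-coordinate monodromy moves. But the ``simplification'' that closes your argument rests on a false claim. You assert that the preperiodic points of $\phi_0(z)=z^d$ are just $0$ and the prime-to-$d$ roots of unity, and hence that every point of $\prep_{g_{n,d}}(a,b)$ with $a\ge 1$ has a zero coordinate. This is wrong: \emph{every} root of unity is $\phi_0$-preperiodic, and a root of unity whose order is not prime to $d$ is strictly preperiodic with all coordinates nonzero in the corresponding point of $\P^n$. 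For instance, with $d=2$, the point $[-1,1,\dots,1]$ lies in $\prep_{g_{n,d}}(1,1)$ and has no zero coordinate; for $d>2$ there are even more such points, and the paper's whole argument for $a=1$, $d>2$ is built on them. So there is no reduction of the strictly preperiodic case to ``some coordinate is zero.''

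This error is not cosmetic — it undermines the mechanism you rely on. To read off the monodromy at the basepoint $g_{n,d}$ you need the preperiodic locus to be generically \'etale over $\Mor_{n,d}$ there, and (as the paper explicitly notes) it is \emph{not} \'etale at preperiodic points of $g_{n,d}$ lying in the ramification locus, i.e.\ at points with a zero coordinate. If your claim were true, there would be \emph{no} strictly preperiodic points of $g_{n,d}$ at which the fibre is reduced, and the basepoint trick would collapse entirely. The paper instead proceeds by induction on $a$ via the surjections $\prep_{f_{n,d}}(a+1,b)\to\prep_{f_{n,d}}(a,b)$, reducing to transitivity on fibres of a single point; it then works exclusively with nonzero-coordinate preperiodic points when $d>2$ (where $\prep_{\phi_0}(1,1)$ has $d-1\ge 2$ elements), and handles $d=2$ separately — there $\prep_{\phi_0}(1,1)=\{-1\}$ is a singleton, so one \emph{is} forced through zero coordinates and needs the smoothness result of Lemma \ref{lem:deg2} to justify passing through such points, plus a further case split $b=1$ versus $b>1$. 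None of this structure is visible in your proposal, and the route you propose through ``clear the zeros and reduce to Proposition \ref{prop:galois}'' does not go through.
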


\begin{proof}

  Since $f_{n,d}$ is defined over $k_{n,d}$, the Galois action on
  $\prep_{f_{n,d}}(a,b)$, $a,b\geq 1$, is compatible with the natural
  surjections
  \[
  \prep_{f_{n,d}}(a+1,b) \sr{f_{n,d}}{\lr} \prep_{f_{n,d}}(a,b) .
  \]
  Thus, by induction on $a$, it suffices to show that for any $a>0$,
  there exists an element $x \in \prep_{f_{n,d}}(a-1,b)$ such that for
  any $y,y' \in \prep_{f_{n,d}}(a,b)$ with $f_{n,d}(y) = f_{n,d}(y')$,
  there exists an element $\gamma$ of the monodromy such that
  $\gamma(y) = y'$.

  If $n=1$, the claim follows immediately from Lemma \ref{lem:prep2}
  so in the following we shall assume $n>1$.

  As before, we may assume that $k = \C$.  The proof of transitivity
  is similar to that for the case of periodic points, except that we
  replace the use of the maps $\phi_c$ by $\phi_{c,\ep}$. So for $0
  \leq i \leq n$ consider the family of endomorphisms $f_i: \P^n
  \times \A^{2n} \to \P^n \times \A^{2n}$ defined by
  \begin{multline*}
    f_i(([x_0,\dots,x_n],(c_1,\dots,c_n,\ep_1,\dots,\ep_n))) = \\
    ([x_0(x_0 -\ep_1x_i)^{d-1} + c_1x_i^d,\dots,x_{i-1}(x_{i-1} -
    \ep_ix_i)^{d-1}
    + c_ix_i^d,x_i^d,x_{i+1}(x_{i+1} - \ep_{i+1}x_i)^{d-1} + x_i^d, \\
    \dots, x_n(x_n - \ep_nx_i)^{d-1} + c_nx_i^d],
    (c_1,\dots,c_n,\ep_1,\dots,\ep_n)) \ .
  \end{multline*}
  On the open affine $U_i$ given by $x_i \neq 0$, $f_i$ is the product
  of the $n$ polynomials $\phi_{c_j,\ep_j}$.
 
Let $g_{n,d}$ be the $d$-power map as before. Contrary to the the case
of periodic points, the map from the locus of preperiodic points to
the base is not etale at preperiodic points of $g_{n,d}$ contained in
its ramification locus. However, the map is etale at preperiodic
points all of whose coordinates are non zero and this will suffice
(except when $d=2$) for our needs (\emph{cf.} the discussion in \S
\ref{subsec:mono}).

Suppose $a=1$. Let $\zeta$ be a primitive $d^b -1$-th root of unity
and let $x' \in \Per_{g_{n,d}}(b)$ be the point $[\zeta,1,\dots,1,1]$
and let $x = g_{n,d}(x')$. The preperiodic points $y$ such that
$g_{n,d}(y) = x$ are of the form $[\zeta\xi_1,\xi_2,\dots,\xi_{n},1]$,
where all the $\xi_i$ are $d$-th roots of unity and at least one of
them is not equal to $1$.

We now also assume that $d>2$. Using the monodromy action of the
family $f_n$ and Lemma \ref{lem:prep1} it follows that we may assume
that $\xi_i = \xi$, where $\xi$ is a fixed $d$-th root of unity or
$\xi_i = 1$. 

Since $d>1$, there exists a $d$-th root of unity $\xi'$ such that
$\xi' \neq 1,\xi$. Let $y = [\zeta\xi,1,\dots,1]$, $y' = [\zeta\xi,
\xi,\dots, \xi, 1,\dots,1]$, where there are $n_1$ $\xi$'s and $n_2$
$1$'s with $n_1, n_2>0$.  Each step of the following sequence of
transformations is given either by multiplying through by a constant
or by applying the monodromy of $f_i$ for some $i$ such that the
$i$-th coordinate is equal to $1$:
\begin{multline*}
y = [\zeta\xi,1,\dots,1] \to [1,\zeta^{-1}\xi^{-1},\dots,
\zeta^{-1}\xi^{-1}]
\to [1, \zeta^{-1}{\xi'}^{-1},\dots, \zeta^{-1}{\xi'}^{-1},
\zeta^{-1}{\xi}^{-1},\dots, \zeta^{-1}{\xi}^{-1}] \\
\to [\zeta \xi, \xi{\xi'}^{-1},\dots, \xi{\xi'}^{-1},1\dots,1] 
\to [\zeta  \xi, \xi, \dots, \xi,1,\dots,1] = y' \ .
\end{multline*}
In the last transformation we also use the fact that $\xi {\xi'}^{-1}$
is a $d$-th root of unity not equal to $1$ so, like $\xi$ and $\xi'$,
an element of $\prep_{\phi_{0}}(1,1)$.

Let $y'' = [\zeta,\xi,\dots,\xi,1\dots,1]$, where there are $n_1$
$\xi$'s and $n_2$ $1$'s with $n_1, n_2>0$ as before. Since $d>2$,
there exists $\zeta' \in \Per_{\phi_0}(b)$ such that
$\zeta\zeta'^{-1}$ also has period $b$. We then have a similar
sequence of transformations:
\begin{multline*}
y' =  [\zeta  \xi, \xi, \dots, \xi,1,\dots,1] \to
[1,\zeta^{-1}, \dots, \zeta^{-1},
\zeta^{-1}{\xi}^{-1}, \dots, \zeta^{-1}{\xi}^{-1}] \\
\to [1,\zeta'\zeta^{-1}, \dots, \zeta\zeta^{-1},
\zeta^{-1}{\xi}^{-1}, \dots, \zeta^{-1}{\xi}^{-1}]
\to [\zeta\xi, \zeta'\xi,\dots,\zeta'\xi,1,\dots,1] 
\to  [\zeta\xi, \zeta'\xi',\dots,\zeta'\xi',1,\dots,1] \\ \to
[\zeta\xi(\zeta'\xi')^{-1},1,\dots,1,(\zeta'\xi')^{-1},\dots,(\zeta'\xi')^{-1}]
\to
[\zeta(\zeta'\xi)^{-1},1,\dots,1,(\zeta'\xi)^{-1},\dots,(\zeta'\xi)^{-1}] \\
\to [\zeta, \zeta'\xi,\dots,\zeta'\xi,1,\dots,1] 
\to [1,\zeta^{-1}\zeta'\xi,\dots,\zeta^{-1}\zeta'\xi,\zeta^{-1},\dots,\zeta^{-1}]
\\ \to
[1,\zeta^{-1}\xi,\dots,\zeta^{-1}\xi,\zeta^{-1},\dots,\zeta^{-1}] 
\to [\zeta,\xi,\dots,\xi,1,\dots,1] = y'' \ . 
\end{multline*}
In the above we have used that $\zeta,\zeta', \zeta\zeta'^{-1}$ and
their inverses are in $\Per_{\phi_0}(b)$ and each one of these
multiplied by $\xi, \xi', \xi\xi^{-1}$ or any of their inverses is an
element of $\prep_{\phi_0}(1,b)$. By symmetry, we then conclude that
the desired transitivity holds in this case.

Now suppose $d=2$. In this case $\prep_{\phi_0}(1,1) = \{-1\}$, a
singleton, so the above argument breaks down and we will need to
consider paths passing through elements of $\prep_{g_{n,d}}(1,b)$, one
of whose coordinates is $0$. This is justified by Lemma \ref{lem:deg2}
below.

As before, let $y = [-\zeta,\dots,-\zeta,1]$ with $\zeta$ a primitive
$2^b-1$-th root of unity and $y' =
[-\zeta,\dots,-\zeta,\zeta,\dots,\zeta,1]$, where there is at least
one $-\zeta$ and one $\zeta$. 
We need to consider the cases $b=1$ and $b>1$ separately.

First suppose $b=1$, so $\zeta = 1$.  We then have the sequence of
transformations
%\begin{multline*}
\[
y =  [-1,\dots, -1,1] \to [1,\dots,1,-1] \to [1,\dots,1,0,-1] \\
\to [-1,\dots,-1,0,1] \to [-1,\dots,-1,1,1] \ .
\]
%\end{multline*}
Repeating this procedure we see that we can connect $y$ to $y'$ and by
symmetry the transitivity follows in this case.

Now suppose $b >1$, so there exists $\zeta' \in \Per_{\phi_0}(b)$ so
that, as before, $\zeta \zeta'^{-1} \in \Per_{\phi_0}(b)$. We then
have a sequence of transformations
\begin{multline*}
y = [-\zeta,\dots,-\zeta,1] \to [1,\dots,1,-\zeta^{-1}] \to
[1,\dots,1,0,-\zeta^{-1}] 
\to [-\zeta, \dots,-\zeta,0,1] \\\to  [-\zeta, \dots,-\zeta,1,1] \to
[1,\dots,1,-\zeta^{-1},-\zeta^{-1}] 
\to [1,\dots,1,-\zeta'^{-1},-\zeta^{-1}] \\ \to
[-\zeta^{-1},\dots,-\zeta^{-1},\zeta\zeta'^{-1},1] 
\to [-\zeta^{-1},\dots,-\zeta^{-1},\zeta,1] \ .
\end{multline*}
Repeating this procedure, we see that we can connect $y$ to $y'$ and
then by symmetry transitivity follows.

Finally, suppose $a>1$. By induction, we can choose $x$ to be an arbitrary
point of preperiod $a-1$ and period $b$, so we let $x' =
[\zeta,\zeta,\dots,\zeta,1]$ where $\zeta$ is an element of
$\prep_{\phi_0}(a,1)$ and $x = g_{n,d}(x)$. The points in
$g_{n,d}^{-1}(x)$ are of the form $[\zeta\xi_1,\dots,\zeta\xi_n,1]$
where $\xi_1$ is a $d$-th root of $1$, so $\zeta\xi_i \in
\prep_{\phi_0}(a,1)$. One sees that the monodromy acts transitively on
$g_{n,d}^{-1}(x)$ simply by considering the monodromy of the family of
maps $f_n$ and applying Lemma \ref{lem:prep2}.

\end{proof}

Let $F:\P^n_k \times \Mor_{n,d/k} \to \P^n_k \to \Mor_{n,d/k}$ be the
universal morphism of degree $d$ and let $P_F(b+1,b) \subset
\Mor_{n,d/k}$ be the subscheme defined in \S \ref{subsec:p}.  The
fibre of the projection map from $P_F(b+1,b)$ to $\Mor_{n,d/k}$ over
any point $f \in \Mor_{n,d/k}$ consists of $f$-preperiodic points of
preperiod at most $1$ and period dividing $b$.
\begin{lem} \label{lem:deg2} %\marginpar{lem:deg2}
  If $d=2$ and $\ch(k) \neq 2$, then $P_F(b+1,b)$ is smooth at any
  preperiodic point of $g_{n,2}$ of preperiod $1$ and period $b$.
\end{lem}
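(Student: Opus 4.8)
The plan is to pass to an affine chart, reduce the smoothness assertion to the surjectivity of one differential at the point $p=(y,[g_{n,2}])$, and then build up the needed tangent directions; the crucial ones will come from deformations of $g_{n,2}$ that mix two coordinates.

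First I would normalise. Write $y=[\eta_0:\cdots:\eta_n]$. Since $g_{n,2}(y)$ is periodic of period $b$, each $\eta_i^2$ is $0$ or a $(2^b-1)$-st root of unity; after rescaling $y$ and relabelling I may assume $\eta_n=1$, so that in the affine chart $\{x_n\neq 0\}$ (which contains all the points relevant near $p$, since $g_{n,2}^{\,b+1}(y)=g_{n,2}(y)$ has $n$th coordinate $1$) the point $y$ becomes $\bar u=(\eta_0,\dots,\eta_{n-1})$ with each $\eta_i\in\{0\}\cup\mu_{2^b-1}\cup(-\mu_{2^b-1})$, and $g_{n,2}$ becomes the product map $(u_0,\dots,u_{n-1})\mapsto(u_0^2,\dots,u_{n-1}^2)$. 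The hypothesis that the preperiod is $1$ and not $0$ says exactly that some coordinate $\eta_k$, $0\le k\le n-1$, lies in $-\mu_{2^b-1}$. By the description of $P_F(b+1,b)$ preceding the lemma, in this chart of $\P^n$ and a suitable affine chart of $\Mor_{n,2/k}$ near $[g_{n,2}]$ (coordinatised by the coefficients of the defining forms), $P_F(b+1,b)$ is cut out by the $n$ components of $\Phi:=F^{\,b+1}-F$, $F$ the universal morphism. By Lemma \ref{lem:lift}(1) (applicable, $F$ being quasi-polarised via $\mc{O}(1)$), $P_F(b+1,b)$ is finite flat over $\Mor_{n,2/k}$, hence equidimensional of dimension $D:=\dim\Mor_{n,2/k}$; being cut out by $n$ equations in a smooth scheme of dimension $n+D$, it is smooth at $p$ if and only if $d\Phi|_p\colon k^{n+D}\to k^n$ is surjective. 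So everything reduces to a rank computation.

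Next I would compute the ``horizontal'' block of $d\Phi|_p$ (the derivative in the $\P^n$-directions): it is $D(g_{n,2}^{\,b+1})|_{\bar u}-D(g_{n,2})|_{\bar u}$, diagonal since $g_{n,2}$ is a product map, with $i$-th entry $2^{b+1}\eta_i^{2^{b+1}-1}-2\eta_i=2\eta_i\bigl(2^b\eta_i^{2(2^b-1)}-1\bigr)$. This vanishes if $\eta_i=0$ and equals $2\eta_i(2^b-1)\neq 0$ if $\eta_i\in\pm\mu_{2^b-1}$ (using $\eta_i^{2(2^b-1)}=1$ and $\ch(k)\neq 2$). Hence the image of the horizontal block is the span of $\{e_i:\eta_i\neq 0\}$, and it remains to realise each $e_j$ with $\eta_j=0$ via the ``vertical'' block — i.e.\ to produce, for each such $j$, a deformation of $g_{n,2}$ whose effect on $\Phi$ at $p$ has a nonzero $e_j$-component and no $e_l$-component for the other zero indices.

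The hard part is that the obvious deformations do not work: changing one factor $u_i\mapsto u_i^2$ into $u_i^2+c_i$, or even into the family $u_i^2-\ep_iu_i+c_i$ of Lemma \ref{lem:prep1}, contributes nothing in the $e_j$-direction when $\eta_j=0$, because $0$ is then a critical point — precisely why the preperiodic locus fails to be \'etale over the parameter space along the ramification locus. Instead, for each zero index $j$ I would deform by replacing $x_j^2$ with $x_j^2+s\,x_kx_n$ (keeping the other coordinate maps $x_l^2$), where $\eta_k\in-\mu_{2^b-1}$ as above (so $k\neq j$); the perturbed tuple still has no common zero, so this is a genuine curve in $\Mor_{n,2/k}$ through $[g_{n,2}]$. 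Its variation field in the chart $\{x_n\neq 0\}$ is $u_k\,e_j$, and the chain rule for the iterate $F^{\,b+1}$ shows that the factor $\eta_j=0$ annihilates every term except the last, leaving $\partial_sF^{\,b+1}(\bar u)|_0=\eta_k^{\,2^b}e_j$ against $\partial_sF(\bar u)|_0=\eta_k e_j$; hence this deformation moves $\Phi$ by $\eta_k\bigl(\eta_k^{\,2^b-1}-1\bigr)e_j$. Writing $\eta_k=-\zeta$ with $\zeta^{2^b-1}=1$ and using that $2^b-1$ is odd, $\eta_k^{\,2^b-1}=-1$, so this vector is $-2\eta_k\,e_j\neq 0$ (again $\ch(k)\neq 2$) — of exactly the required shape. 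Combining these vectors over all zero indices $j$ with the image of the horizontal block gives surjectivity of $d\Phi|_p$, and the lemma follows. The step on which I expect to spend the most thought is this last one: recognising that none of the families used earlier smooth out $P_F(b+1,b)$ along the ramification locus, and finding the off-diagonal perturbation whose derivative collapses (after the chain rule) to a nonzero multiple of $e_j$ — which works only because the chosen coordinate $\eta_k$ is a \emph{negated} root of unity, so that $\eta_k^{\,2^b-1}=-1\neq 1$, and this is available exactly when the preperiod equals $1$.
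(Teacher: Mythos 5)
Your argument is correct (in the characteristic-zero setting in which the lemma is actually applied) but follows a genuinely different route from the paper's. The paper does not compute a Jacobian: it uses $F(P_F(b+1,b))=P_F(b)$, i.e.\ $P_F(b+1,b)=F^{-1}(P_F(b))$, together with the smoothness of $P_F(b)$ at all periodic points of $g_{n,2}$ established earlier, and the observation that at a preperiod-one point with exactly one vanishing coordinate $F$ is analytically a cyclic cover of degree $2$; smoothness of the preimage is then reduced to the discriminant of $F$ meeting $P_F(b)$ transversely at the image point, and this transversality is checked after restricting to the one-parameter family $\phi_c$, where the discriminant is $z=0$ and the fixed-point equation is $z^2+c-z=0$ at $(z,c)=(0,0)$. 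You instead verify the Jacobian criterion directly in a chart: the diagonal horizontal block accounts for the directions $e_i$ with $\eta_i\neq0$, and the missing directions $e_j$ (for $\eta_j=0$) are produced by the off-diagonal deformations $x_j^2\mapsto x_j^2+s\,x_kx_n$, whose contribution collapses via the chain rule (because $\eta_j=0$) to $\eta_k(\eta_k^{2^b-1}-1)e_j=-2\eta_k e_j\neq0$, using precisely that a point of preperiod exactly one has some coordinate $\eta_k$ with $\eta_k^{2^b-1}=-1$; I checked this computation and it is right, as is the reduction of smoothness to surjectivity of $d\Phi$ via Lemma \ref{lem:lift}(1). What your approach buys: it is self-contained (no appeal to the local structure of the double cover), and it treats points with several vanishing coordinates uniformly, whereas the paper's proof is written only for points with exactly one vanishing coordinate (which is all that is needed in the proof of Proposition \ref{prop:prep}). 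What the paper's approach buys: it recycles the already-proved smoothness of $P_F(b)$ at $\Per_{g_{n,2}}(b)$ and reduces everything to a one-line transversality check in the familiar one-parameter family. One shared caveat: your nonvanishing claim $2\eta_i(2^b-1)\neq0$ (like the eigenvalue computation $d^b\xi\neq1$ behind the paper's smoothness of $P_F(b)$) needs the characteristic not to divide $2^b-1$ in addition to $\ch(k)\neq2$; this is harmless here since the lemma is invoked over fields of characteristic zero, but worth a sentence if you want the statement literally as given.
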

\begin{proof}
  We have $F(P_F(b+1,b)) = P_F(b)$. As we have already seen, $ P_F(b)$
  is smooth at all periodic points of $g_{n,d}$. Moreover, $F$ is
  analytically locally at any point of $\prep_{g_{n,d}}(1,b)$ with
  exactly one coordinate equal to $0$, a cyclic cover of degree
  $2$. Thus to prove smoothness it suffices to show that the
  discriminant of $F$ intersects $P_F(b)$ transversely at any point of
  $\Per_{g_{n,d}}(b)$ with exactly one coordinate equal to $0$.

  To prove transversality, it then suffices to restrict to any
  subvariety of $\Mor_{n,d}$ and prove transversality for the induced
  subvarieties. By considering, say, the family $f_i$ as in the proof
  of Proposition \ref{prop:galois}, where $i$ is such that the $i$-th
  coordinate of the point under consideration is nonzero, we reduce to
  the case of the one parameter $\phi_c$, $c \in \A^1_c$ and we need
  to prove transversality at the point $(z,c) = (0,0)$. The
  discriminant locus is given by $z=0$ (since $d=2$) and the locus of
  fixed points by $z^2 +c - z = 0$, so the lemma follows.
\end{proof}

\subsection{}

\begin{proof}[Proof of Theorem \ref{thm:zhang}]
  Let $X$ be the Zariski closure of an infinite subset of preperiodic
  points.  By Lemma \ref{lem:lift} (1), all preperiodic points of a
  quasi-polarised map are defined over the algebraic closure of the
  base field, so we may assume without loss of generality that $K$ is
  the algebraic closure of $k_{n,d}$ and $X$ is defined over a finite
  extension of $k_{n,d}$. By replacing $X$ by the union of its Galois
  conjugates, we may then assume that $X$ is defined over $k_{n,d}$.

  Since $\prep_{f_{n,d}}(a,b)$ is finite for all $a,b$, there exists
  an infinite sequence of tuples $(a_i,b_i)$, with $a_i\geq 0$ and
  $b_i>0$, so that $X$ contains a point $x_i \in
  \prep_{f_{n,d}}(a_i,b_i)$ for all $i$. Since $X$ is defined over
  $k_{n,d}$ it follows from Propositions \ref{prop:galois} and
  \ref{prop:prep} that $\prep_{f_{n,d}}(a_i,b_i) \subset X$ for all
  $i$.  We let $X'$ be the Zariski closure of the specialisations,
  over the point in $\Mor_{n,d}$ corresponding to $g_{n,d}$, of the
  preperiodic points in $X(K)$.  Since all preperiodic points lift to
  the generic fibre by Lemma \ref{lem:lift}, it follows that $X'
  \subset \P^n_k$ has the same properties as $X$ but with respect to
  $\Per_{g_{n,d}}$.

  For any $i$, the set of points in $\P^n_{\ov{k}}$ of the form
  $[\xi_0,\dots,\xi_{n_1},1]$ with $\xi_i \in \prep_{\phi_0}(a_i,b_i)$ is
  contained in $\prep_{g_{n,d}}(a_i,b_i)$, hence in $X'$. As in the
  proof of Theorem \ref{thm:inv}, it then follows that $X' = \P^n_k$,
  so $X = \P^n_{k_{n,d}}$.

\end{proof}

\begin{rem} \label{rem:nongen} %\marginpar{rem:nongen}
  Note that we do not use the full strength of the genericity
  hypothesis in the proofs of this section or of the previous one. It
  suffices to assume that the morphism under consideration corresponds
  to the generic point of an irreducibility subvariety of
  $\Mor_{d,n/k}$ which contains all the families $f_{c,\epsilon}$ and
  is smooth at $g_{n,d}$. Since all the $f_{c,\ep}$ are smooth and
  have dimension $2n$, there exist such subvarieties for all $n$ with
  dimension independent of $d$.
\end{rem}

\section{The dynamical ``Mordell--Lang'' conjecture for generic
  endomorphisms}

\subsection{}

Let $(X,f)$ be an algebraic dynamical system over a field $K$ of
characteristic zero, \emph{i.e.}, $X$ is an algebraic variety and
$f:X\to X$ is a morphism. The conjecture of Ghioca and Tucker
\cite{ghioca-tucker-conj} asserts that if $x \in X(K)$ and $Y$ a
subvariety of $X$ are such that $O_f(x) \cap Y(K)$ is infinite then
there is a periodic subvariety $Z$ of $X$ with $Z \subset Y$ and $Z(K)
\cap O_f(x) \neq \emptyset$. It has been proved when $f$ is etale by
Bell, Ghioca and Tucker in \cite{bell-ghioca-tucker} and in a few
other cases. It is not known in general if $X = \P^n_K$ and $\deg(f) >
1$; this was the original case investigated by Denis
\cite{denis-suites} who proved the assertion under the assumption that
$O(x) \cap Y(K)$ is large in a suitable sense.

For $(X,f) = (\P^n_K, f)$, with $f$ a generic endomorphism, by Theorem
\ref{thm:inv} there are no non-trivial $f$-periodic subvarieties
contained in $\P^n_K$, so the conjecture in this case is equivalent to
the following:

\begin{thm} \label{thm:gt} %\marginpar{thm:gt}
  Let $f: \P^n_K \to \P^n_K$ be a generic endomorphism of degree $d >
  1$ over an algebraically closed field $K$ of characteristic zero.
  For each $x \in \P^n(K)$, every infinite subset of $O_f(x)$ is
  Zariski dense in $\P^n_K$.
\end{thm}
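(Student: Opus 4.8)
The plan is to reduce Theorem \ref{thm:gt} to Theorems \ref{thm:inv} and \ref{thm:zhang} by a dichotomy argument. Let $Y \subset \P^n_K$ be the Zariski closure of an infinite subset $S$ of $O_f(x)$; I must show $Y = \P^n_K$. Since $f(S) \setminus S$ is finite, $f(Y)$ and $Y$ share an irreducible component, so after replacing $f$ by an iterate and passing to an $f$-invariant component we may assume $Y$ is irreducible with $f^r(Y) = Y$; by Theorem \ref{thm:inv}(1) it then suffices to rule out $\dim Y = 0$ only \emph{a posteriori}, so really the crux is to show $Y$ cannot be a proper positive-dimensional subvariety. The strategy: either $Y$ contains infinitely many periodic points of $f$ --- in which case the argument of Theorem \ref{thm:inv}, via Proposition \ref{prop:galois} and specialisation to $g_{n,d}$, forces $Y = \P^n$ --- or $Y$ contains only finitely many periodic points, and then I will run the $p$-adic interpolation argument of Bell--Ghioca--Tucker \cite{bell-ghioca-tucker} after a suitable specialisation, concluding that $O_f(x) \cap Y$ would have to be contained in finitely many arithmetic progressions of indices, hence $S$ finite, a contradiction unless $Y = \P^n$ (using Theorem \ref{thm:zhang} to handle the density statement uniformly).

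First I would set up the specialisation. By genericity, identify $f$ with $f_{n,d}$ over $k_{n,d}$; the point $x$ is defined over some finite extension $L$ of $k_{n,d}$. Choosing a place of $L$ above a large prime $p$ of good reduction for $f_{n,d}$ and for $x$ (so that $\bar f$ has good reduction and the residue field is finite), I obtain a model over a $p$-adic ring $R$ with residue field $\F_q$. The key geometric input is a lifting argument for periodic points: if the reduction $\bar x$ of $x$ is periodic for $\bar f$ --- which happens after replacing $x$ by $f^a(x)$ for some $a$, since $O_{\bar f}(\bar x)$ is finite in $\P^n(\overline{\F_q})$ --- then by a Hensel-type argument (cf. Lemma \ref{lem:lift}) applied to the non-vanishing of $\det(d f^b - \mathrm{id})$ I can detect whether $x$ itself lies on, or can be specialised onto, an honest periodic point, or whether the orbit of $x$ in the $p$-adic ball around $\bar x$ is governed by a single analytic map $F_p : \Z_p \to \P^n(\C_p)$ with $F_p(m) = f^{m_0 + m \cdot t}(x)$ for appropriate $m_0, t$. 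This is exactly the interpolation setup of \cite{bell-ghioca-tucker}: $f^t$ acts on the residue disc of $\bar x$ as a power series congruent to the identity, so some further iterate acts as an analytic contraction, and the orbit points interpolate to a $p$-adic analytic function of the iterate index.

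The main step is then to combine the two cases. In the ``many periodic points'' branch, Proposition \ref{prop:galois} and the Tao--Vu-style density criterion (as used in the proof of Theorem \ref{thm:inv}) apply verbatim, so $Y = \P^n_K$. In the ``interpolation'' branch, the defining equations of $Y$ pulled back along $F_p$ give a $p$-adic analytic function on $\Z_p$ with infinitely many zeros, hence identically zero, so the \emph{entire} $p$-adic orbit tail $\{f^{m_0 + mt}(x)\}_{m \geq 0}$ lies in $Y(\C_p)$; since this orbit tail is Zariski dense in $\P^n$ whenever the original orbit is infinite --- here one invokes Theorem \ref{thm:inv}(2), or rather observes that the Zariski closure of the orbit tail is still $f$-invariant of positive dimension and applies Theorem \ref{thm:inv}(1) --- we again get $Y = \P^n_K$. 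Either way $Y = \P^n_K$, proving the theorem.

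I expect the main obstacle to be making the lifting/interpolation dichotomy uniform and honest: specifically, arranging a prime $p$ and a specialisation for which the reduction $\bar f$ has good reduction, $\bar x$ becomes periodic after finitely many iterates, and the multiplier of $\bar f$ at that periodic point is a $p$-adic unit away from the relevant roots of unity so that the Bell--Ghioca--Tucker analytic contraction exists --- all while ensuring the finitely many exceptional $f$-periodic points possibly lying on $Y$ do not interfere. The delicate point is that, a priori, the reduction could force $x$ onto the exceptional locus for every choice of $p$; handling this requires observing that the number of $f_{n,d}$-periodic points lying on a fixed proper $Y$ is bounded and then choosing $p$ (equivalently, a specialisation of the transcendental parameters) to avoid a thin set, which is possible precisely because $k_{n,d}$ has large transcendence degree. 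Once this is in place, the rest is the routine combination of Theorem \ref{thm:izhang}, Theorem \ref{thm:iinv}, and the standard $p$-adic zero-counting.
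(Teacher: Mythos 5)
Your high-level dichotomy (either $Y$ acquires infinitely many periodic points, so Theorem \ref{thm:zhang} applies, or one runs a Bell--Ghioca--Tucker $p$-adic interpolation and invokes Theorem \ref{thm:inv}) is indeed the skeleton of the paper's argument, but two of your steps do not hold as stated. First, the opening reduction is false: for an arbitrary infinite subset $S$ of the orbit (say the even iterates), $f(S)\setminus S$ need not be finite, so the Zariski closure $Y$ of $S$ cannot be assumed to satisfy $f^r(Y)=Y$; if that reduction were available, Theorem \ref{thm:gt} would be an immediate corollary of Theorem \ref{thm:inv}(1) and no $p$-adic input would be needed --- this is precisely the gap between Theorem \ref{thm:inv}(2) and the Mordell--Lang statement. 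Second, for a general algebraically closed $K$ the point $x$ need not be defined over a finite extension of $k_{n,d}$; reducing to that case is not automatic and is the content of Lemma \ref{lem:fin}, which itself needs Corollary \ref{cor:ram} together with a Witt-vector/Baire-category argument to find a specialisation that stays generic in $\Mor_{n,d}$ while acquiring a $p$-adic model.

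The more serious gap is that you supply no mechanism for resolving the dichotomy. Your plan needs a place at which the reduction $\ov f$ is \'etale along the \emph{entire} (infinite) reduced orbit of $x$, and you propose to get it by ``avoiding a thin set'' of specialisations; but each orbit point contributes its own bad locus, and the reduced orbit can meet the ramification locus for every choice of place, so no such avoidance argument is available --- and when it fails you have no argument producing periodic points in $Y$ at all. The paper's proof is organised exactly around this difficulty: it specialises at a prime $p$ dividing $d$, so that $f$ reduces to the power map $g_{n,d,p}$ whose differential vanishes identically; then Lemma \ref{lem:invlim} shows every arithmetic progression of iterates of $x$ converges $p$-adically to a periodic point, so an infinite subset of the orbit lying in the ($p$-adically closed) $Y$ forces periodic points into $Y$, and Lemmas \ref{lem:power} and \ref{lem:unbounded} are what guarantee that the eventual periods of the relevant specialisations of $x$ are unbounded, hence that the periodic points so produced are infinitely many, after which Theorem \ref{thm:zhang} applies. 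Only when the reduced orbit reaches a periodic point with all coordinates nonzero --- i.e.\ off the ramification locus of the power map, which is forward invariant, so \'etaleness along the whole reduced orbit is then automatic --- does the paper pass to a large prime $q$ and apply Lemma \ref{lem:bgt}, and the intermediate cases (some coordinate zero) require the monodromy and intersection-theoretic arguments of Lemma \ref{lem:unbounded} over $\ov{\F}_p$. None of these ingredients (the choice $p\mid d$, Lemma \ref{lem:invlim}, Lemma \ref{lem:power}, Lemma \ref{lem:unbounded}) appears in your sketch, and without them the branch ``finitely many periodic points on $Y$, hence BGT applies'' does not go through.
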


The idea of the proof is as follows: we first use specialisation to
reduce to the case that $K$ is a finite extension of $k_{n,d}$. We
then show using a $p$-adic argument, for a prime $p$ dividing $d$,
that any $Y$ such that $O_f(x) \cap Y$ is infinite must contain
infinitely many periodic points, or there exists a prime $q$ not
diving $d$, such that $x, f$ have specialisations $\ov{x}, \ov{f}$
over $\F_q$ with $\ov{f}$ etale on the orbit of $\ov{x}$. Both these
conditions lead to the conclusion that $Y = \P^n_K$; the first from
Theorem \ref{thm:zhang} and second by using the results of
\cite{bell-ghioca-tucker}, which we recall as Lemma \ref{lem:bgt}
below.

\subsection{}

The following lemma is an immediate consequence of the results of
\cite{bell-ghioca-tucker} if $L = \Q_p$. For general $L$, it follows
from the methods of \cite{bell-ghioca-tucker} if one replaces Theorem
3.3 therein by Theorem 7 of \cite{amerik-non-preperiodic}.

\begin{lem}\label{lem:bgt} %\marginpar{lem:bgt}
  Let $L/\Q_p$ be a finite extension, $\pi:X \to \Spec(R)$ a smooth
  scheme of finite type over the ring of integers $R$ of $L$, and $f:X
  \to X$ a morphism over $\Spec(R)$. Suppose $x \in X(R)$ is such that
  $f$ is etale on the orbit of $x$. If $Y \subset X$ is any closed
  subscheme with $Y \cap O_f(x)$ infinite, then $Y_L$ contains a
  positive dimensional periodic subvariety of $X_L$.
\end{lem}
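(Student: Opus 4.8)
The plan is to reproduce the $p$-adic interpolation argument of Bell, Ghioca and Tucker \cite{bell-ghioca-tucker}, pointing out the one place where the restriction $L = \Q_p$ is used and how \cite[Theorem 7]{amerik-non-preperiodic} removes it. Write $\mathfrak m$ for the maximal ideal of $R$ and $\kappa = R/\mathfrak m$ for the (finite) residue field. First I would reduce to the case where the reduction $\ov x \in X(\kappa)$ is a \emph{periodic} point of the reduction $\ov f$: since $\kappa$ is finite the $\ov f$-orbit of $\ov x$ is eventually periodic, so after replacing $x$ by $f^a(x)$ for suitable $a \geq 0$ -- which discards only finitely many points of $O_f(x)$, hence keeps $Y \cap O_f(x)$ infinite and $f$ \'etale on the orbit, and changes neither $X$, $f$ nor $Y$ -- we may assume $\ov f^{\,m}(\ov x) = \ov x$ for some $m > 0$.

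Let $D \subseteq X(R)$ be the residue disc of points reducing to $\ov x$, identified via \'etale coordinates centered at $x$ with the product of $e$ copies of $\mathfrak m$, where $e$ is the relative dimension of $X/R$ at $x$. Since $\ov f^{\,m}(\ov x) = \ov x$, the map $f^m$ carries $D$ into $D$ and is given on $D$ by an $e$-tuple of power series over $R$ converging on $D$. As $f$ is \'etale along the orbit, $d(f^m)$ is invertible at every orbit point, and since all these points reduce to $\ov x$ the reduction of $d(f^m)$ at every point of $D$ is the constant matrix $d(\ov f^{\,m})_{\ov x} \in GL_e(\kappa)$ (in particular $f^m$ is already \'etale on all of $D$). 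Raising to the order $k$ of this matrix in the finite group $GL_e(\kappa)$, the map $g_0 = f^{mk}$ satisfies $g_0 \equiv \mathrm{id} \pmod{\mathfrak m}$ on $D$; iterating $g_0$ improves this congruence by one power of $\mathfrak m$ at a time, so after replacing $g_0$ by $g = g_0^{p^j} = f^{N}$ with $N = mkp^j$ and $j$ large we may assume $g \equiv \mathrm{id} \pmod{\mathfrak m^{c}}$ on $D$ for any prescribed $c$. Taking $c$ large enough relative to the ramification index $e(L/\Q_p)$, the family of iterates $n \mapsto g^{\circ n}$ extends from $n \in \Z_{\geq 0}$ to a $p$-adic analytic map $\Z_p \times D \to D$; \textbf{this interpolation step is the only use of the hypothesis $L = \Q_p$ in \cite{bell-ghioca-tucker}, and for general $L$ it is precisely \cite[Theorem 7]{amerik-non-preperiodic}.} Consequently each $\theta_i : \Z_p \to X(R)$, $\theta_i(n) = f^i(g^{\circ n}(x))$, is $p$-adic analytic for $i = 0, \dots, N-1$, and $O_f(x) = \bigcup_{i=0}^{N-1}\{\theta_i(n) : n \in \Z_{\geq 0}\}$.

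Now pull back local equations for $Y$ along each $\theta_i$: each becomes a $p$-adic analytic function on $\Z_p$ which, by Strassmann's theorem, either vanishes identically or has only finitely many zeros. Since $Y \cap O_f(x)$ is infinite, there is an $i$ for which every such function vanishes identically, i.e.\ $\theta_i(\Z_p) \subseteq Y(R)$. This $\theta_i$ is non-constant -- otherwise $f^{N+i}(x) = f^i(x)$, forcing $x$ to be preperiodic and contradicting $|O_f(x)| = \infty$ -- so its Zariski closure $V$ in $X_L$ is irreducible (if $V = V_1 \cup V_2$ with $V_j$ proper closed, the sets $\theta_i^{-1}(V_j)$ are proper analytic subsets covering $\Z_p$, hence finite, which is absurd) and of positive dimension (a non-constant continuous map from the connected space $\Z_p$ cannot have finite image). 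Finally, $g(\theta_i(n)) = \theta_i(n+1)$ and $n \mapsto n+1$ is a bijection of $\Z_p$, so $g$ maps $\theta_i(\Z_p)$ onto itself; hence $g(V) \subseteq V$ and $\ov{g(V)} = V$, and as in \cite{bell-ghioca-tucker} this exhibits $V$ (replaced by an iterate if one insists on $f^r(V) = V$ on the nose) as a positive-dimensional $f$-periodic subvariety of $X_L$ contained in $Y_L$, as desired. The one genuine technical obstacle is the interpolation of iterates over a ramified base, which, as noted, is supplied by \cite{amerik-non-preperiodic}; the rest is a transcription of \cite{bell-ghioca-tucker}.
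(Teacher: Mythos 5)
Your proposal is correct and follows essentially the same route as the paper, which itself offers no independent argument but simply invokes Bell--Ghioca--Tucker for $L=\Q_p$ and prescribes replacing their Theorem 3.3 by Theorem 7 of Amerik for general $L$ --- precisely the interpolation step you isolate, the rest being the standard BGT transcription (reduction to a periodic residue disc, an iterate $p$-adically close to the identity, interpolation of iterates, Strassmann, and the invariance of the Zariski closure of $\theta_i(\Z_p)$). Two cosmetic points: $\Z_p$ is totally disconnected, so your parenthetical connectedness argument for $\dim V>0$ should instead just note that $V$ contains the infinitely many distinct points $\theta_i(n)$, $n\in\Z_{\geq 0}$; and the congruence-improvement under $p$-power iteration really rests on $d g_0\equiv I \pmod{\mathfrak{m}}$ on the disc (which you do establish), not on the vacuous statement $g_0\equiv \mathrm{id} \pmod{\mathfrak{m}}$.
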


\begin{lem} \label{lem:fin} %\marginpar{lem:fin}
  Theorem \ref{thm:gt} for arbitrary extensions $K$ of $k_{n,d}$
  follows from the case of finite extensions.
\end{lem}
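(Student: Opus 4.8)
The plan is to reduce the statement over an arbitrary extension field $K/k_{n,d}$ to the statement over a finitely generated subextension, and then to a finite extension, by a standard spreading-out and specialisation argument. First I would observe that the data $(f, x, Y)$ together with an infinite subset $S \subset O_f(x) \cap Y(K)$ is defined over a subfield $K_0$ of $K$ which is finitely generated over $k_{n,d}$: indeed $f$ is already defined over $k_{n,d}$, the point $x$ and the subvariety $Y$ are each cut out by finitely many coordinates or equations, and one may choose countably many (in fact it suffices to pick infinitely many, and then finitely many more to witness their distinctness and their membership in $Y$) points of $S$ and adjoin their coordinates. Since membership in $O_f(x)$ is preserved under any field extension, $O_f(x) \cap Y(K_0)$ is still infinite, so it suffices to prove the theorem for $K_0$, which is finitely generated over $k_{n,d}$.

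Next I would spread the situation out over a $k_{n,d}$-variety: write $K_0$ as the function field of an integral $k_{n,d}$-variety $T$, and after shrinking $T$ realise $x$ as a section of $\P^n_T \to T$, realise $Y$ as a closed subscheme of $\P^n_T$ flat over $T$, and note that $f_{n,d}$ extends to the constant family $f_{n,d} \times \mathrm{id}_T$. The infinite set $S$ spreads out to an infinite collection of sections of $O_f(x)$ over (a possibly smaller) $T$ lying in $Y$. Now specialise to a closed point $t \in T$: its residue field $\kappa(t)$ is a finite extension of $k_{n,d}$, the specialised point $x_t$ and subvariety $Y_t$ are defined over $\kappa(t)$, and the specialisations of the points of $S$ still lie in $O_{f_{n,d}}(x_t) \cap Y_t(\kappa(t))$. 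The only point needing care is that these specialisations remain an \emph{infinite} set; but distinct sections of an orbit correspond to distinct iterates $f^{m}(x)$, $f^{m'}(x)$ with $m \neq m'$, and whether $f^m(x) = f^{m'}(x)$ is a closed condition on $T$, so after removing a proper closed subset (a finite union of such conditions suffices to keep infinitely many of them distinct) we may choose $t$ avoiding all of them.

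Applying the theorem in the case of the finite extension $\kappa(t)/k_{n,d}$ to the data $(f_{n,d}, x_t, Y_t)$ then forces $Y_t = \P^n_{\kappa(t)}$, hence $Y_T = \P^n_T$ on a dense open of $T$ by flatness, hence $Y = \P^n_{K_0} = \P^n_K$ at the generic point. The main obstacle is the bookkeeping in the spreading-out step: one must ensure simultaneously that $Y$ stays flat (or at least that its fibres do not jump in dimension, which suffices), that infinitely many orbit points stay in $Y$, and that infinitely many of them stay pairwise distinct, all after passing to a single closed point. Since each of these is an open condition on $T$ (or is achieved by removing a proper closed subset), and $k_{n,d}$-varieties of positive dimension have plenty of closed points with finite residue field over $k_{n,d}$, this causes no real difficulty, but it is the part of the argument that requires attention rather than the conclusion, which is immediate once the reduction is in place.
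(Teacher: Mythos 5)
Your first reduction (to $K$ finitely generated over $k_{n,d}$, then spreading out over a model and specialising to a point whose residue field is finite over $k_{n,d}$) matches the paper, but the step you yourself flag as ``the only point needing care'' is a genuine gap, and your proposed fix does not work. To ensure that the specialisations of infinitely many distinct orbit points $f^{m_1}(x), f^{m_2}(x),\dots$ remain pairwise distinct at a single closed point $t$, you must avoid the loci $\{f^{m_i}(x)=f^{m_j}(x)\}$ for \emph{infinitely} many pairs $(i,j)$; removing a finite union of such closed conditions only guarantees finitely many distinct specialisations. A countable union of proper closed subsets can very well contain every closed point of the base (the relevant closed points form a countable set when $k=\Q$), so there is in general no single $t$ avoiding them all. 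Equivalently: the specialised point $x_t$ may become $f$-preperiodic, in which case $O_{f}(x_t)\cap Y_t$ is finite and the inductive hypothesis for the finite extension $\kappa(t)$ gives nothing.

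This bad case is exactly where the paper's proof does its real work, and it is not a reduction to the finite-extension case at all but a separate argument. If the specialised point $x'$ over a finite extension $K'$ of $k_{n,d}$ is preperiodic, the paper invokes Corollary \ref{cor:ram} (no preperiodic point of a generic endomorphism meets the ramification locus) to shrink the model $M$ so that the closure of the finite orbit is disjoint from the ramification locus of the family; it then specialises further to a closed point with finite residue field $F$, uses Hensel's lemma plus a Baire category argument to produce a $W(F)$-point of $M$ through that closed point whose generic point dominates $M$, and applies the $p$-adic interpolation result of Bell--Ghioca--Tucker (Lemma \ref{lem:bgt}), valid because the orbit stays \'etale, together with Theorem \ref{thm:inv}(1) to force $Y=\P^n_K$ directly. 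Without some argument of this kind (or another mechanism handling the preperiodic specialisation), your reduction is incomplete; only the case where the specialised orbit stays infinite is covered by your proposal, and that case is also the easy half of the paper's proof.
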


\begin{proof}
  Without loss of generality we may assume that $k = \Q$ and $K$ is a
  finitely generated extension of $k_{n,d}$. Let $x \in \P^n(K)$ and
  let $Y$ be a subvariety of $\P^n_K$ such that $O_f(x) \cap
  Y(K)$ is infinite.

  Since $K$ is finitely generated, there exists a smooth irreducible
  scheme $M$ of finite type over $\Z$ with function field $K$ and a
  dominant morphism $\pi:M \to \Mor_{n,d}$ inducing the inclusion
  $k_{n,d} \subset K$ on function fields. Let $\mf{f}: M \times \P^n
  \to M \times \P^n$ be the pullback of the universal morphism from
  $\Mor_{n,d} \times \P^n$, so $\mf{f}$ restricted to the generic
  fibre of the first projection is equal to $f$.  Let $\mf{x}$ (resp.~
  $\mf{Y}$) be the Zariski closure of $x$ (resp.~$Y$) in $ M \times
  \P^n$. By shrinking $M$ if necessary, we may assume that $\mf{x}$
  and $\mf{Y}$ are flat over $M$.

  Since $p$ is a dominant finite type morphism, there exists a point
  $f'$ of $M$ (which we think of as an endomorphism of $\P^n$ using
  $\pi$) mapping to the generic point of $\Mor_{n,d}$ and so that the
  residue field $K'$ at $f'$ is a finite extension of $k_{n,d}$. Let
  $x'$ (resp.~ $Y'$) denote the fibre of $\mf{x}$ (resp.~$\mf{Y}$) over
  $f'$. If $O_{f'}(x')$ is infinite, then so is $O_{f'}(x') \cap Y'$
  hence by the condition on $K'$ it would follow that $Y' = \P^n_{K'}$
  which (by flatness) implies $Y = \P^n_K$.

  If $O_{f'}(x')$ is finite, then $x'$ is $f'$-preperiodic. Since $f'$
  is generic, it follows from Corollary \ref{cor:ram} that
  $O_{f'}(x')$ does not intersect the ramification locus of $f'$. Let
  $|O_{f'}(x')| = n$ and let $\mf{Z}$ denote the Zariski closure of
  $\{x, f(x),\dots,f^{n-1}(x)\}$ in $M \times \P^n$. Let
  $\mf{R}\subset M \times \P^n$ be the ramification locus of $\mf{f}$
  and consider the closed subset $\mf{R} \cap \mf{Z}$ of $M \times
  \P^n$. By the above, the fibre of this subset over $f'$ is empty so,
  by the properness of $\P^n$, its projection in $M$ is a proper
  closed subset. Replacing $M$ by the complement of this subset we may
  assume that $\mf{R} \cap \mf{Z} = \emptyset$.

  Now let $f''$ be any closed point (which we again think of as an
  endomorphism) of $M$ which lies in the closure of $f'$. Since $M$ is
  of finite type over $\Z$, the residue field of $f''$ is a finite
  field $F$. Let $x''$ (resp. $Y''$) denote the fibre of $\mf{x}$
  (resp. $\mf{Y}$) over $f''$. Since $f''$ is in the closure of $f'$,
  $x''$ is in the closure of $x'$ hence $|O_{f''}(x'')| \leq n$. Since
  $\mf{R} \cap \mf{Z} = \emptyset$, it follows that $f''$ is
  unramified at all points of $O_{f''}(x'')$. Let $W(F)$ be the ring
  of Witt vectors of $F$. Since $M$ is smooth over $\Z$, by Hensel's
  lemma the set of points in $M(W(F))$ which specialise to $f''$ is in
  bijection (after choosing local coordinates) with $W(F)^n$. The
  subset consisting of points which lie in a proper closed subscheme
  of $M$ is a countable union of nowhere dense (in the adic topology)
  subsets. It follows by Baire's theorem that there exists a point in
  $M(W(F))$ specialising to $f''$ and not lying in any proper closed
  subscheme. Letting $L$ be the quotient field of $W(K)$, it follows
  that the image of the induced map from $\Spec(L)$ to $M$ must be the
  generic point. We thus get an inclusion of $K$ into $L$ and we may
  apply Lemma \ref{lem:bgt} with $R = W(K)$ and $X = \Spec(W(F))
  \times \P^n$ to the base change of $\mf{f}$, $\mf{x}$, $\mf{Y}$ via
  the morphism $\Spec(W(F)) \to M$ to conclude using Theorem
  \ref{thm:inv} (1) that $Y = \P^n_K$.

\end{proof}

\begin{lem} \label{lem:power} %\marginpar{lem:power}
  Let $p$ be a prime and $g_{n,d,p}$ denote the endomorphism of
  $\P^n_{\F_p}$ given by raising each coordinate to its $d$-th power.
  Let $X \subset \P^n_{\ov{\F}_p}$ be a positive dimensional
  subvariety. Then the set $\cup_{r\geq 0}\  g_{n,d,p}^r(X(\ov{\F}_p))$
  contains periodic points of infinitely many distinct periods.
\end{lem}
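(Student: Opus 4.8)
Plan of proof. The plan is to reduce the assertion to the case of the power map $s \mapsto s^d$ on $\P^1$ by means of a coordinate projection, and then to settle that one-dimensional case by an elementary argument about multiplicative orders of $d$.

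First I would replace $\P^n$ by the smallest coordinate linear subspace $L_0 = \{x_j = 0 : j \in J\}$ containing $X$. Since permutations of the coordinates and rescalings of individual coordinates commute with $g_{n,d,p}$, the subspace $L_0$ is preserved by $g_{n,d,p}$ and the induced endomorphism of $L_0 \cong \P^{n-|J|}$ is again a power map; since the forward orbit of $X(\ov{\F}_p)$ stays inside $L_0$, I may therefore assume (relabelling $n$) that $X$ is not contained in any coordinate hyperplane $\{x_j = 0\}$ and write $g = g_{n,d,p}$ for this new $n$. Each $x_i/x_j$ is then a well-defined nonzero rational function on $X$, and as $X$ is not a point some such ratio is non-constant; after a further relabelling I may assume $\mathrm{pr} = [x_0 : x_1]\colon \P^n \dashrightarrow \P^1$ restricts to a dominant rational map $\phi = \mathrm{pr}|_X\colon X \dashrightarrow \P^1$. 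Writing $g_1\colon \P^1 \to \P^1$ for the power map $[y_0:y_1]\mapsto [y_0^d:y_1^d]$, one has $\mathrm{pr}\circ g = g_1 \circ \mathrm{pr}$ as rational maps, and the indeterminacy locus $\Lambda = \{x_0 = x_1 = 0\}$ of $\mathrm{pr}$ satisfies $g^{-1}(\Lambda) = \Lambda$. Hence for any $\xi \in X(\ov{\F}_p)$ with $\xi \notin \Lambda$ the entire forward orbit of $\xi$ avoids $\Lambda$ and $\mathrm{pr}(g^r(\xi)) = g_1^r(\mathrm{pr}(\xi))$ for all $r \geq 0$.

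Next I would use that every point of $\P^n(\ov{\F}_p)$ is $g$-preperiodic, being defined over some finite field which $g$ preserves. Thus for $\xi$ as above there is $a \geq 0$ with $g^a(\xi)$ $g$-periodic, of some exact period $b'$; then $g^a(\xi) \in \bigcup_{r\geq 0} g^r(X(\ov{\F}_p))$, and $\mathrm{pr}(g^a(\xi)) = g_1^a(\mathrm{pr}(\xi))$ is $g_1$-periodic of period dividing $b'$. So if $\mathrm{pr}(\xi)$ has exact $g_1$-period $b$, then so does $g_1^a(\mathrm{pr}(\xi))$, whence $b \mid b'$. Consequently it suffices to exhibit infinitely many integers $b$ that occur as the exact $g_1$-period of some point of $\P^1(\ov{\F}_p)$ lying in $\phi(X(\ov{\F}_p)\setminus\Lambda)$: each such $b$ then divides the period of some periodic point of $\bigcup_r g^r(X(\ov{\F}_p))$, and infinitely many distinct such multiples force infinitely many distinct periods. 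Since $\phi$ is dominant, its image contains a dense open subset of $\P^1$, so $\phi(X(\ov{\F}_p)\setminus\Lambda)$ omits only finitely many closed points of $\P^1$; it therefore suffices to prove that $g_1$ has periodic points of infinitely many distinct periods on $\P^1(\ov{\F}_p)$.

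Finally I would settle this last statement. A root of unity $\zeta \in \ov{\F}_p^{\times}$ of multiplicative order $m$ (so $p \nmid m$) is $g_1$-periodic precisely when $\gcd(m,d) = 1$, and then its exact period is the multiplicative order of $d$ modulo $m$ (as $\zeta^{d^b}=\zeta$ iff $m \mid d^b - 1$). Taking $m = \ell$ to be any prime with $\ell \nmid pd$, the field $\ov{\F}_p$ contains a primitive $\ell$-th root of unity, which is hence $g_1$-periodic of period equal to the order of $d$ in $(\Z/\ell\Z)^{\times}$; and these orders are unbounded as $\ell$ varies, since otherwise some $N$ would have the order of $d$ modulo every such $\ell$ dividing $N$, forcing every prime $\ell \nmid pd$ to divide the nonzero integer $d^{N!}-1$, which is absurd. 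This gives $g_1$-periodic points of infinitely many periods and completes the plan. The only real content is this last number-theoretic step, which is where ``infinitely many'' comes from; everything before it is the bookkeeping that transports periods from $\P^n$ to $\P^1$ through preperiodicity and the indeterminacy of $\mathrm{pr}$, together with the mild point that a dominant rational image omits only finitely many closed points.
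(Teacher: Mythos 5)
Your proposal is correct and takes essentially the same route as the paper's one-sentence proof: exploit that the power map preserves the coordinate stratification, project to a suitable coordinate ratio to reduce to the power map on $\P^1$, and conclude there from the unboundedness of the multiplicative order of $d$ modulo primes $\ell \nmid pd$. You have merely supplied the bookkeeping the paper leaves implicit (the semiconjugacy away from $\{x_0 = x_1 = 0\}$ and the divisibility relation transporting periods from $\P^1$ back to $\P^n$).
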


\begin{proof}
  Since $g_{n,d,p}$ preserves the standard decomposition of $\P^n$ as
  a disjoint union of affine spaces, by projecting to a suitable
  coordinate, we reduce to the statement for $n=1$ in which case the
  statement is obvious.
\end{proof}

\begin{rem}
  We expect that the lemma holds with $g_{n,d,p}$ replaced by an
  arbitrary quasi-polarised morphism---or even more generally with
  some extra conditions on $X$---defined over a finite field, but this
  seems much harder to prove. However, for endomorphisms of abelian
  varieties the corresponding statement can indeed be proved.
\end{rem}

\begin{lem} \label{lem:unbounded} %\marginpar{lem:unbounded}
  Let $d>1$ be an integer and $p$ a prime such that $p \mid d$. Let
  $\phi$ be the morphism $\A^2 \to \A^2$ given by $\phi(x,c) = (x^d +
  c,c)$ over the field $\ov{\F}_p$. Then
\begin{enumerate}[(a)]
\item For $c \in \ov{\F}_p$, the monodromy action on the set of fixed
  points of $\phi_c$ is transitive.
\item Let $X \subset \A^2$ be an irreducible subvariety of dimension
  one, mapping dominantly to $\A^1$ via the second projection
  $p_2$. Assume that the intersection of $X$ with the generic fibre of
  $p_2$ is not a preperiodic point of $\phi$.  Then the $\phi$-periods
  of the points in $X(\ov{\F}_p)$ (which are all preperiodic) are
  unbounded.
\end{enumerate}
\end{lem}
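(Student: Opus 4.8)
The plan is to treat the two parts separately. Part (a) is immediate from the shape of the fixed-point locus of the family: let $Z=\{(x,c)\in\A^2 : x^d-x+c=0\}$ be the scheme of fixed points. Since one can solve $c=x-x^d$, the first projection gives an isomorphism $Z\cong\A^1$, so $Z$ is irreducible and smooth; moreover $p_2|_Z\colon Z\to\A^1_c$ is finite flat of degree $d$, and because $p\mid d$ the derivative $\partial(x-x^d)/\partial x=1-dx^{d-1}$ is the constant $1$, so $p_2|_Z$ is in fact étale everywhere (the Artin--Schreier phenomenon). By the discussion in \S\ref{subsec:mono}, the monodromy action on the $d$ points of a geometric fibre is transitive precisely because $Z$ is irreducible.

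For part (b) I would argue by contradiction: suppose the $\phi$-periods of the points of $X(\ov{\F}_p)$ are all bounded by some $B$, and set $L=\lcm(1,\dots,B)$. For each $a\ge 0$ let $W_a\subset\A^2_{\ov{\F}_p}$ be the reduced closed subscheme underlying the vanishing locus of the polynomial $\phi_c^{a+L}(x)-\phi_c^{a}(x)$ (which is nonzero, having $x$-degree $d^{a+L}$); equivalently $W_a$ is the Zariski closure of $\{(x_0,c_0)\in\A^2(\ov{\F}_p) : \phi_{c_0}^{a+L}(x_0)=\phi_{c_0}^{a}(x_0)\}$. Applying $\phi_{c_0}$ to such an identity shows $W_a(\ov{\F}_p)\subseteq W_{a+1}(\ov{\F}_p)$, and since $\ov{\F}_p$ is algebraically closed these reduced loci are recovered from their $\ov{\F}_p$-points, so $W_a\subseteq W_{a+1}$. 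Thus $W_0\subseteq W_1\subseteq\cdots$ is an ascending chain of closed subschemes of the noetherian scheme $\A^2$, hence stabilises: $W_a=W_{a_0}$ for all $a\ge a_0$.

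Now every point $(x_0,c_0)\in X(\ov{\F}_p)$ is $\phi$-preperiodic, its forward orbit lying in the finite set $\A^2(\F_q)$ for a suitable $q=p^s$, and its period divides $L$ by assumption; writing $a$ for its preperiod, $\phi_{c_0}^{a+L}(x_0)=\phi_{c_0}^{a}(x_0)$, so $(x_0,c_0)\in W_a(\ov{\F}_p)\subseteq W_{a_0}(\ov{\F}_p)$. Hence $X(\ov{\F}_p)\subseteq W_{a_0}$, and since $X$ is a variety over the algebraically closed field $\ov{\F}_p$ its $\ov{\F}_p$-points are Zariski dense, so $X\subseteq W_{a_0}$. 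Therefore the regular function $\phi_c^{a_0+L}(x)-\phi_c^{a_0}(x)$ vanishes identically on $X$, in particular at the generic point $\eta_X$ of $X$. Because $X$ dominates $\A^1_c$, the point $\eta_X$ lies over the generic point of $\A^1_c$, so the intersection of $X$ with the generic fibre of $p_2$ is $\Spec K(X)$ with the coordinate $x$ mapping to some element $\bar x\in K(X)$; the vanishing just obtained reads $\phi_c^{a_0+L}(\bar x)=\phi_c^{a_0}(\bar x)$ in $K(X)$, i.e. $\bar x$ is $\phi_c$-preperiodic. This contradicts the hypothesis that the intersection of $X$ with the generic fibre of $p_2$ is not a preperiodic point of $\phi$, and so proves (b).

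The arguments are short, and the points I expect to require some care are bookkeeping ones: verifying that the $W_a$ really form an ascending chain of schemes (which is where it matters that the base field is $\ov{\F}_p$, so that these reduced loci are determined by their closed points), and the translation between ``the intersection of $X$ with the generic fibre of $p_2$'' and ``the generic point of $X$ together with the image of the coordinate $x$ in $K(X)$'' --- for which the dominance of $X\to\A^1_c$ is exactly the hypothesis that is needed. One should also note that the assertion ``its period divides $L$'' uses only that the period is $\le B$, so that no uniform bound on the preperiods is required.
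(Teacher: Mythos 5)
Your proof of part (a) is correct, and in fact cleaner than the paper's: parametrising the fixed-point locus by $c=x-x^d$ exhibits it as a copy of $\A^1$, finite of degree $d$ over the $c$-line and everywhere \'etale because $p\mid d$, so irreducibility gives transitivity of the monodromy over every $c$. (The paper instead proves irreducibility of the compactified curve $\ov{P}_1\subset\P^1\times\P^1$ by checking smoothness at the point at infinity.)

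Part (b), however, has a fatal gap at the step ``$W_0\subseteq W_1\subseteq\cdots$ is an ascending chain of closed subschemes of the noetherian scheme $\A^2$, hence stabilises.'' Noetherianness gives the \emph{descending} chain condition on closed subschemes (ACC on ideal sheaves), not the ascending one; an increasing chain of closed subsets of $\A^2$ need not stabilise (growing finite sets of points already show this). And in the present situation the chain genuinely does not stabilise unless $d$ is a power of $p$: $W_a$ is the locus of points of preperiod at most $a$ and period dividing $L$, and as soon as $d$ has a prime factor $\ell\neq p$ there are, already on the fibre $c=0$, strictly preperiodic points of arbitrarily large preperiod and period $1$ (take roots of unity of order $\ell^a$, which fall onto the fixed point $1$ only after roughly $a/v_\ell(d)$ steps), so $W_{a+1}\supsetneq W_a$ for infinitely many $a$. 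Only when $d$ is a $p$-power is $\phi_c$ injective on $\ov{\F}_p$-points, every point periodic, and the chain constant--in that special case your argument does go through.

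Without the stabilisation, what your argument actually proves is: if both the periods \emph{and the preperiods} of the points of $X(\ov{\F}_p)$ were bounded, then $X$ would lie in a single $W_{a_0}$ and its generic point would be preperiodic, a contradiction. For each fixed $a$ you only learn that $X\cap W_a$ is finite, and this is perfectly compatible with $X(\ov{\F}_p)$ being infinite, since a point of $X(\F_q)$ can have preperiod growing with $q$. The case you have not excluded--bounded period but unbounded preperiod--is precisely the content of the lemma, and it is where the paper has to work: it passes to the images $X_n=\phi^n(X)$, compactifies in $\P^1\times\P^1$, and compares intersection numbers of $\ov{X}_n$ with the fixed-degree curves $\ov{P}_b$ for $r+1$ distinct periods $b$, bounding the local intersection multiplicities at the point at infinity so as to force an intersection point in the affine plane, i.e.\ a point of $X_n$ of period $b$. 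Some device of this kind, controlling the escape of the intersections to infinity (equivalently, the unbounded preperiods), is needed; the noetherian shortcut is not available.
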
 

\begin{proof}
  We will use elementary intersection theory on $\P^1 \times Y$, with
  $Y$ a smooth projective curve. 

  Let $X_n = \phi^n(X)$ for $n \geq 0$. By replacing $X$ by
  $\phi^r(X)$ for some $r \geq 0$, we assume that the map $p_2:X_n \to
  \P^1$ has degree $e$ for all $n$.

  For any integer $b>0$, let $P_b$ be the locus of points in $\A^2$
  which are $\phi$-periodic of period $b$. Since $p\mid d$, $\phi$
  is inseparable on the fibres of $p_2$, so the graph of
  $\phi^b|_{\P^1 \times \{c\}}$ intersects the diagonal in $\A^2$
  transversely for all $c \in \A^1$ and all $b$. It follows that $P_b$
  is a finite etale cover of $\A^1$ via $p_2$ and $P_b \cap P_{b'} =
  \emptyset$ for $b \neq b'$.

  Let $\ov{P}_b$ be the closure of $P_b$ in $\P^1 \times \P^1 \supset
  \A^1 \times \A^1 = \A^2$ and let $\ov{X}_n$ be the closure of $X_n$
  in $\P^1 \times \P^1$.  The curve $P_b$ is a subcurve of the curve
  $Q_b$ in $\A^2$ with equation
\[
(\dots((x^d +c)^d + c)^d\dots)^d + c - x = 0\ ,
\]
where we have $b$ pairs of brackets. Replacing $c$ by $1/c'$ in the
above equation and multiplying through by ${c'}^{d^{(b-1)}}$ we get
the equation
\[
(\dots((c'x^d + 1)^d + {c'}^{d^{b-1} -d^{b-2}})^d  \dots)^d +
{c'}^{d^{b-1} - 1} - {c'}^{d^{b-1}}x = 0\ .
\] 
It follows that the only point on all the $\ov{P}_b$ intersecting the
fibre over $c = \infty$ is the point at infinity on this fibre and the
support of $\ov{P}_b \cap \ov{P}_{b'}$ is equal to this point if $b
\neq b'$. When $b=1$, the equation is
\[
c'x^d + 1 -c'x = 0 \ .
\]
One then sees that $\ov{P}_1$ is irreducible since the equation shows
that it is smooth at the point $(\infty,\infty)$ and the closure in
$\P^1 \times \P^1$ of any irreducible component of $P_1$ must contain
this point; this proves (a).

If the $\phi$-periods of all points in $X(\ov{\F}_p)$
are bounded,   then there must exist some $b>0$ so that
$|\ov{X}_n \cap \ov{P}_b| \to \infty$ as $n \to \infty$ and, by
assumption, $\ov{X}_n \nsubseteq \ov{P}_b$ for all $b$. Writing
$[\ov{X}_n] = e[\{0\} \times \P^1] + a_n[\P^1 \times \{0\}]$ in
$\ns(\P^1 \times \P^1)$, it follows that $a_n \to \infty$ as $n \to
\infty$.

Let $\ov{X}'$ be the normalisation of $\ov{X}$ and denote by $\eta:
\ov{X}' \to \P^1$ the composition of the normalisation map and the
projection $p_2$. Let $x_1,x_2,\dots,x_r$ be the points in $\ov{X}$
mapping to the point $(\infty,\infty)$ in $\P^1 \times \P^1$. If
$r=0$, it follows that ${X} \cap P_b \neq \emptyset$ for all $b$ so we
may assume that $r>0$. Let $b_1,b_2,\dots,b_{r+1}$ be any distinct
integers and let $\gamma: Y \to \ov{X}'$ be a Galois cover such that
there are components $S_i$ of $(\mr{id} \times \eta
\gamma)^{-1}(\ov{P}_{b_i})$ in $\P^1 \times Y$ which map
isomorphically to $Y$ via the second projection. Let $Z$ be the
section of this projection induced by the tautological section of
$\P^1 \times \ov{X}'$ and let $Z_n = \psi^n(Z)$, where $\psi$ is the
map of $\P^1 \times Y$ induced by $\phi$. We may write $[S_i] = [\ast
\times Y] + s_i[\P^1 \times \ast]$ and $[Z_n] = [\ast \times Y] +
a_n[\P^1 \times \ast]$ in $\ns(\P^1 \times Y)$ where $s_i \geq 0$ and
$\ast$ denotes any point. It follows that the intersection number $S_i
\cdot Z_n = s_i + a_n \to \infty$ as $n \to \infty$ for all
$i=1,2,\dots,r$.

Now consider the local intersection multiplicity $I_y(S_i,Z_n)$ of
$S_i$ and $Z_n$ at a point $\infty \times y$, where $y \in Y$ is such
that $\gamma(y) = x_j$ for some $j$. If this is bounded for all such
$y$ and all $n$, since $S_i \cdot Z_n \to \infty$, it would follow
that for large $n$, $S_i$ and $Z_n$ must intersect in a point $(z,y')$
such that $\eta \gamma(y') \in \A^1 \subset \P^1$ which implies that
$X_n \cap P_{b_i} \neq \emptyset$.

Suppose this is not the case, so $I_y(S_i,Z_{n_l}) \to \infty $ for
some infinite sequence $n_l \to \infty$. Since the $S_i$ are all
distinct smooth curves and there are only finitely many of them, it
follows that $I_y(T,Z_{n_l})$ must remain bounded as $n_l \to \infty$
where $T$ runs over all $S_{i'}$ for $i' \neq i$, and all of their
Galois conjugates. Up to Galois conjugation there are only $r$ points
$y$ as above, so it follows that we must have that for all large
$n_l$, there exists $i_{n_l} \in \{1,2,\dots,r+1\}$ so that
$I_y(S_{i_{n_l}},Z_{n_l})$, is bounded by an integer independent of
$n_l$ for all $y$ as above. It follows that we must have $X_{n_l} \cap
P_{b_{i_{n_l}}} \neq \emptyset$.

By choosing infinitely many disjoint sets of $r+1$ distinct integers
$\{b_1,b_2,\dots,b_{r+1}\}$ as above, we see that $X_{n_b} \cap P_b
\neq \emptyset$ for infinitely many distinct integers $b$ (and $n_b$
depending on $b$). Since all the $P_b$ are disjoint, it follows that
$X$ contains preperiodic points of infinitely many distinct periods.
\end{proof}

\begin{rem}
  We also expect this lemma to hold in much greater generality,
  \emph{e.g.}, for any one parameter family of maps defined over
  $\ov{\F}_p$.
\end{rem}

The following lemma is the key to our construction of a periodic point
in $Y$ under the assumption that $O(x) \cap Y(K)$ is infinite.

\begin{lem} \label{lem:invlim} %\marginpar{lem:invlim} 
  Let $L/\Q_p$ be a finite extension, $\pi:X \to \Spec(R)$ a smooth
  projective scheme over the ring of integers $R$ of $L$, and $f:X \to
  X$ a quasi-polarised morphism over $\Spec(R)$.  Assume that the
  differential of $f$, $df$, is $0$ on the special fibre of $X$. For
  any $x \in X(L) = X(R)$, let $b$ be the period of the reduction
  $\ov{x}$ of $x$ in the special fibre of $X$. Then for any integer $a
  \geq 0$, the sequence of points $f^{a + bn}(x)$ converges to a
  periodic point of $X(L)$ of period $b$.
\end{lem}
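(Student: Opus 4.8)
The plan is to exploit $p$-adic contraction. The hypothesis that $df \equiv 0$ on the special fibre means that $f$ is "superattracting" modulo the maximal ideal $\mathfrak m$ of $R$: if two points are congruent mod $\mathfrak m$, then their images under $f$ are congruent mod $\mathfrak m^2$, and more generally $f$ multiplies the "depth of agreement" by at least a factor related to the ramification. First I would set up local coordinates: since $\ov x$ is periodic of period $b$, the point $\ov{x}$ is fixed by $\ov{f}^{\,b}$, so working near $\ov x$ in an affine chart of $X$ over $R$, we may regard $g := f^b$ as a map from a formal/adic neighbourhood of $x$ to itself (after noting $f^b(x)$ also reduces to $\ov x$). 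The key estimate is that $df^{\,b}$ is also $\equiv 0$ mod $\mathfrak m$, since it is a product of copies of $df$ evaluated along the orbit, each of which vanishes mod $\mathfrak m$; hence $g$ is a $p$-adic contraction on the residue disc $D = \{\,y \in X(R) : \ov y = \ov x\,\}$, with contraction factor $|\pi|^{c}$ for some $c>0$ (where $\pi$ is a uniformiser of $R$), uniformly on $D$.

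Next I would run the contraction argument. The set $D$ is a complete metric space (closed in $X(L)$, which is complete), $g$ maps $D$ into $D$, and for $y,y'\in D$ one has $|g(y)-g(y')| \le |\pi|^{c}\,|y-y'|$ in each coordinate. By the Banach fixed point theorem $g$ has a unique fixed point $w \in D$, and for any starting point $z \in D$ the iterates $g^{m}(z)$ converge to $w$. Now take $z = f^{a}(x)$: since $df \equiv 0$ mod $\mathfrak m$ kills tangent directions, and since $\ov{f}^{\,a}(\ov x)$ has period dividing $b$ and in fact, because $\ov x$ has exact period $b$, $\ov{f}^{\,a+bn}(\ov x) = \ov{f}^{\,a}(\ov x)$ which lies on the same periodic orbit — I need its reduction to lie in the residue disc of $w$. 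Here I should be slightly careful: $w$ reduces to $\ov x$, so I want $f^{a}(x)$ to reduce to a point of period $b$ on the orbit of $\ov x$; it reduces to $\ov{f}^{\,a}(\ov x)$, which has period $b$. So the sequence $f^{a+bn}(x) = g^{n}(f^a(x))$ converges to a fixed point of $g$ lying in the residue disc of $\ov{f}^{\,a}(\ov x)$; call it $w_a$. That $w_a$ has period exactly $b$ for $f$: it is fixed by $f^b = g$ so its period divides $b$, and its reduction $\ov{f}^{\,a}(\ov x)$ has period exactly $b$, and period cannot increase under reduction (specialisation), hence it is exactly $b$.

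The main obstacle I anticipate is making the contraction estimate honest and uniform across the whole residue disc $D$, rather than just at first order at $\ov x$. One has to check that $df^{\,b} \equiv 0$ mod $\mathfrak m$ not only at $x$ but at every $R$-point of $D$ — this follows because $f^b$ as a morphism of the smooth $R$-scheme $X$ has differential $d(f^b) = (df)^{\circ b}$ (chain rule), and $df$ vanishes on the entire special fibre by hypothesis, so $d(f^b)$ vanishes on the entire special fibre, i.e. all its entries lie in $\mathfrak m$; a Taylor expansion on the residue disc then gives $|g(y)-g(y')| \le \max(|\pi|\,|y-y'|, |y-y'|^2)$, and since $|y-y'| \le |\pi|$ on $D$ this is $\le |\pi|\,|y-y'|$, a genuine contraction with ratio $\le |\pi| < 1$. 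A secondary point to handle cleanly is that $f^a(x)$ and all $f^{a+bn}(x)$ genuinely lie in $X(R) = X(L)$ (automatic by properness of $\pi$) and reduce into the right residue disc, which is exactly the statement that reduction commutes with $f$ and with iteration. Once these are in place, completeness of $L$ plus the Banach fixed point theorem finish the proof, and the period claim follows from the non-increase of period under specialisation together with divisibility by $b$.
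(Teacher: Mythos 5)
Your proof is correct, but it takes a genuinely different route from the paper's at the key structural step. The paper first reduces to the case where $\ov{x}$ is a fixed point and then uses the quasi-polarisation hypothesis, via Lemma \ref{lem:lift}\,(2), to lift $\ov{x}$ to an honest fixed point $y$ of $f$ over a finite extension of $L$; it then chooses coordinates $A \cong R[[z_1,\dots,z_n]]$ centred at $y$ and shows $|f(\lambda)| < |\lambda|$ for $\lambda \in (m_R)^n$, so the orbit contracts to the already-constructed point $y$. You instead apply the Banach fixed point theorem to $g = f^b$ on the residue disc $D \cong (m_R)^n$ (the identification again coming from smoothness), obtaining the limit point and the convergence in one stroke. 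Your contraction estimate is sound: writing $g$ in these coordinates, the linear coefficients lie in $m_R$ because $dg$ vanishes on the special fibre (your chain-rule observation), and every higher-order monomial difference already carries a factor $\max(|\lambda|,|\mu|) \le |\varpi|$ for a uniformiser $\varpi$, giving $|g(\lambda)-g(\mu)| \le |\varpi|\,|\lambda-\mu|$ on $D$. What your route buys is that the quasi-polarisation hypothesis, and with it Lemma \ref{lem:lift}, is not needed for this lemma at all: smoothness, properness (to get $X(L)=X(R)$ and reduction) and $df\equiv 0$ on the special fibre suffice, whereas the paper's proof genuinely invokes the lifting of $\ov{x}$ to a periodic point. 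The computational core (coordinates on a residue disc, linear terms divisible by the uniformiser, integral higher-order terms) is the same in both arguments.

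One small point to adjust: you read ``$b$ is the period of $\ov{x}$'' as saying that $\ov{x}$ is itself periodic, but with the conventions of \S 2 the reduction $\ov{x}$ is in general only preperiodic with period $b$, so for small $a$ the point $\ov{f}^{\,a}(\ov{x})$ need not be periodic and $g=f^b$ need not preserve its residue disc. The repair is the same move the paper makes in its first line: since convergence concerns only the tail of the sequence, replace $a$ by $a+bn_0$ with $a+bn_0$ at least the preperiod of $\ov{x}$; then $\ov{f}^{\,a+bn}(\ov{x})$ is one fixed, genuinely periodic point of period $b$ for all $n\ge n_0$, and your contraction argument applies verbatim on its residue disc, with the exact-period claim following, as you say, from divisibility in both directions.
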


\begin{proof}
  Replacing $f$ by $f^n$ and $x$ by $f^{a'}(x)$, for any integer $a'$
  greater than the preperiod of $\ov{x}$, we may assume that $\ov{x}$
  is a fixed point of $f$ and we then need to prove that $f^n(x)$
  converges to a fixed point.

  Since $f$ is quasi-polarised, by Lemma \ref{lem:lift}
  (2), $\ov{x}$ lifts to a fixed point $y$ of $f$ defined over a finite
  extension of $L$; by replacing $L$ by this extension, we may assume
  that $y \in X(L)$.

  Let $A$ be the completion of the local ring of $\ov{x}$ on
  $X$. Since $\pi$ is smooth, $A \cong R[[z_1,z_2,\dots,z_n]]$, where
  $n+1 =\dim(X)$. Using any such isomorphism, the set of points of
  $X(L)$ which specialize to $\ov{x}$ is identified with the set
  $(m_R)^n$, where $m_R$ is the maximal ideal of $R$. We fix such an
  isomorphism which we also assume identifies $y$ with $(0,\dots,0)
  \in (m_R)^n$.

  Since $\ov{x}$ is a fixed point of $f$, $f$ induces an endomorphism
  of $A$ which, with respect to the chosen isomorphism, is given by an
  $n$-tuple of elements $(f_1,f_2,\dots,f_n)$ in the maximal ideal of
  the local ring $R[[z_1,z_2,\dots,z_n]]$. Moreover, since $f$ fixes
  $y$, it follows that the constant term of each $f_i$ is $0$. Since
  $df$ is assumed to be zero on the special fibre of $X$, it follows
  that the coefficients of the linear term of each $f_i$ lies in
  $m_R$.  For any $\l = (\l_1,\l_2,\dots,\l_n) \in (m_R)^n$, let $|\l|
  = \max_i \{|\l_i|\}$. The conditions on the $f_i$ imply that for any
  such $\l$, $|f(\l)| < |\l|$ if $\l \neq (0,\dots,0)$. Since $R$ is a
  discrete valuation ring, it follows that for any such $\l$ we have
  that $f^n(\l) \to (0,0,\dots,0)$ as $n \to \infty$, hence $f^n(x)
  \to y$ as $n \to \infty$.
\end{proof}

\subsection{}

\begin{proof}[Proof of Theorem \ref{thm:gt}]
  By Lemma \ref{lem:fin}, we may assume that $K$ is a finite extension
  of $k_{n,d}$. Furthermore, we may assume without loss of generality
  that our base field $k=\Q$.
 
  Let $x \in \P^n(K)$ and assume that $Y$ is a subvariety defined over
  $K$ such that $I = O(x) \cap Y(K)$ is infinite. Let $X$ be the
  Zariski closure of the image of $x$ in $\Mor_{n,d} \times \P^n_{\Z}$
  and let $\chi$ denote the map $X \to \Mor_{n,d}$ induced by
  projection to the first factor. 

  Let $p$ be a prime dividing $d$. Since $\Mor_{n,d}$ is smooth over
  $\Spec(\Z)$, there is a map $g: \Spec(\Z_p) \to \Mor_{n,d}$ such
  that the generic point of $\Spec(\Z_p)$ maps to the generic point of
  $\Mor_{n,d}$ and the closed point maps to the point correspoding to
  $g_{d,n,p}$, the $d$-power map over $\F_p$.  Since $p \mid d$, the
  differential of the endomorphism of $\P^n_{\Z_p}$ corresponding to
  $g$, which we also denote by $g$, is zero on the special fibre.
  Suppose the fibre $X_{g_{d,n,p}}$ of $\chi$ over $g_{d,n,p}$ is
  infinite. By Lemma \ref{lem:power}, the set $\cup_{r\geq
    0}g_{d,n,p}^r(X_{g_{d,n,p}}(\ov{F}_p))$ contains infinitely many
  periodic points. By applying Lemma \ref{lem:invlim}, we can lift all
  these periodic points to periodic points of $f_{n,d}$ contained in
  $Y$. It then follows from Theorem \ref{thm:zhang} that $Y = \P^n_K$.
  Thus, we may assume from now on that $\chi$ is finite over an open
  neighbourhood of $g_{n,d,p}$.

  By replacing $x$ by $f^r(x)$ for some large $r$, we may assume that
  $X_{g_{d,n,p}}$ contains a periodic point $x' =
  [x_0',x_1',\dots,x_n']$ with $x_i' \in \ov{\F}_p$. Since $\chi$ is
  finite in a neighbourhood of $g_{n,d,p}$, $\Mor_{d,n}$ is smooth,
  hence normal, and $X$ is irreducible, it follows from the going down
  theorem that if none of the $x_i' = 0$ then the fibre $X_{g_{n,d}}$
  of $\chi$ over $g_{n,d}$ contains a point $\tilde{x}'$ lifting
  $x'$. By specialisation, it follows that for all large primes $q$
  the fibre of $\chi$ over $g_{n,d,q}$ contains a point all of whose
  coordinates are non-zero or, equivalently, not contained in the
  ramification locus of $g_{n,d,q}$. Since this locus is invariant
  under $g_{n,d,q}$, we may apply Lemma \ref{lem:bgt} conclude the
  existence of a positive dimensional periodic subvariety of $Y$
  which, by Theorem \ref{thm:inv}, implies $Y = \P^n_K$.

  We now use Lemma \ref{lem:unbounded} to show that such an $x'$ must
  exist, at least after replacing $x$ by a Galois conjugate, or $Y$
  must contain infinitely many periodic points, both cases leading to
  the conclusion that $Y = \P^n_K$. Let $x'$ be as above and suppose
  that $x_0' = 0$. Some other coordinate must be nonzero, so by
  symmetry we may  assume that $x_n' \neq 0$, and then by multipliying
  through by a scalar we may assume $x_n' = 1$. Consider the family of
  endomorphisms $\psi_c$ of $\P^n_{\F_p}$ parametrised by $\A^1$ given
  by
  \[
  \psi_c([x_0,x_1,\dots,x_n]) = [x_0^d + cx_n^d,x_1^d,\dots,x_n^d] \ ,
  \]
  so $\psi_0 = g_{n,d,p}$. Note that on the affine space given by the
  locus with $x_n \neq 0$, $\psi_c$ is given in affine coordinates by 
\[
(z_0,z_1,\dots,z_{n-1}) \mapsto (z_0^d + c,z_1^d,\dots,z_{n-1}^d) \ .
\]

Let $S \subset \Mor_{n,d}$ be the subscheme corresponding to the
family $\psi_c$. By the going down theorem there is an irreducible
component $T$ of $\chi^{-1}(S)$ mapping onto $S$ and containing the
point $(g_{n,d,p},x')$. Let $T'$ be the image of $T$ in $\P^n_{\F_p}$
under the projection of $X$ to $\P^n$. Since $x_n' = 1$, $T'$ is not
contained in the locus given by $x_n = 0$, so by projecting to the
first $n$ coordinates we get a rational map $\rho$ from $T$ to $\A^n$.

  Suppose the composition of $\rho$ with the $i$-th projection is
  non-constant for some $i$, $0 < i \leq n-1$. Since the action of
  $\psi_c$ on the $i$-th coordinate doesn't depend on $c$, it follows
  that $T(\ov{\F}_p)$ must contain preperiodic points of arbitrarily
  large period. By Lemma \ref{lem:invlim} as before, we obtain
  infinitely many periodic points in $Y$, forcing $Y = \P^n_K$.

  So suppose $\rho$ composed with all the $i$-th projections are
  constant for $i>0$ and let $\sigma: T \to \A^1 \times S $ be given
  by $( \pi_0 \rho, \chi)$. By applying Lemma \ref{lem:unbounded} (b),
  it follows that if the image of $T$ is not contained inside a
  preperiodic curve for the map $\phi$ (using the identification of
  $S$ with $\A^1)$ there must be $\phi$ preperiodic points in the
  image with unbounded period. By the construction of $\psi_d$, it
  follows that there are preperiodic points on $T$ of unbounded
  period. As before, this implies that $Y = \P^n$.

  The last case we need to consider is when the image of $T$ lies in a
  preperiodic curve. By replacing $x$ by an element in its orbit if
  necessary, we may assume that this image lies in the periodic
  locus. Now $0$ is a fixed point of the map $z \mapsto z^d$ and the
  point $(0,0)$ is contained in the image of $T$ by construction. By
  Lemma \ref{lem:unbounded} (a), it follows that the point $(0,1)$ is
  also in the image of $T$. We conclude that $X_{g_{d,n,p}}$ contains
  the periodic point $x'' = [1, x_1',\dots,x_{n-1}',1]$. By replacing
  $x'$ with $x''$ and repeating the above argument if necessary, we
  conclude that $Y$ contains infinitely many periodic points in which
  case it must be $\P^n_k$, or $X_{g_{d,n,p}}$ contains a periodic
  point $x' = [x_0',x_1',\dots,x_n']$ with $x_i' \neq 0$ for all $i$.
  As we have already seen, this also implies that $Y = \P^n_K$,
  concluding the proof.
\end{proof}

\begin{rem}
  Note that a similar statement to Remark \ref{rem:nongen} holds: it
  suffices to consider generic points of irreducible subschemes of
  $\Mor_{d,n}$ which contain all the families $f_i$ and and are smooth
  at the point $g_{n,d,p}$ for some prime $p$ dividing $d$.
\end{rem}

\def\cprime{$'$}

%\bibliographystyle{siam} 
%\bibliography{../sources}

\end{document}